\newcommand{\hf}{\textstyle{\frac{1}{2}}}
\newcommand{\E}{\mathbb{E}}
\newcommand{\mP}{\mathbb{P}}
\newcommand{\mN}{\mathbb{N}}
\newcommand{\mR}{\mathbb{R}}
\newcommand{\mC}{\mathbb{C}}
\newcommand{\cR}{\mathcal{R}}
\newcommand{\cRn}{\mathcal{R}_\nu}
\DeclareMathOperator{\tr}{tr}
\DeclareMathOperator{\plim}{plim}
\DeclareMathOperator{\aslim}{a.s.lim}
\DeclareMathOperator{\diag}{diag}
\newcommand{\lhn}{\hat{\ell}_\nu}
\newcommand{\lhat}{\hat{\ell}}
\newcommand{\lmin}{\lambda_{\rm \min}}
\newcommand{\rhonn}{\rho_{\nu n}}
\newcommand{\toas}{\stackrel{\rm a.s.}{\to}}
\newcommand{\tooas}{\stackrel{\rm a.s.}{\longrightarrow}}
\newcommand{\toodist}{\stackrel{\mathcal{D}}{\longrightarrow}}
\newcommand{\ssf}{\mathsf{F}}
\newcommand{\ssm}{\mathsf{m}}
\newcommand{\hprod}{\circ}
\newcommand{\bx}{\bm{x}}
\newcommand{\by}{\bm{y}}
\theoremstyle{plain}
\newtheorem{theorem}{Theorem}
\newtheorem{proposition}[theorem]{Proposition}
\newtheorem{lemma}[theorem]{Lemma}
\begin{document}\title{\Large \textbf{Draft Report: TITLE?}}

\title{Notes on asymptotics of sample eigenstructure for 
  spiked covariance models with non-Gaussian data}
\author{Iain M. Johnstone, Jeha Yang}
\affil{Stanford University}
\date{\today}

\vspace{-1in}

\maketitle

\begin{abstract}
  These expository notes serve as a reference for an accompanying post
  \citet{mjmy18}.
  In the spiked covariance model, we develop results on asymptotic
  normality  of sample leading eigenvalues and certain projections of
  the corresponding sample eigenvectors.
  The results parallel those of \citet{paul07}, but are given using the
  non-Gaussian model of \citet{bayo08}.
  The results are not new, and citations are given, but proofs are
  collected and organized as a point of departure for \citet{mjmy18}.
\end{abstract}

\section{Introduction}
\label{sec:introduction}

Principal Components Analysis (PCA) is typically introduced as a spectral
decomposition of sample covariance matrices.
In practice, it is often applied after standardization of the data,
that is, after each variable is rescaled to have zero sample mean and unit
sample standard deviation.
In other words, PCA is performed on the sample correlation matrix.
It is therefore natural to ask how the standardization process affects
properties of PCA, for example consistency and asymptotic normality of
the sample eigenvalues and eigenvectors.
\citet{mjmy18} addresses this question in the high dimensional setting
in which the number of variables grows in proportion to sample size,
but the underlying correlation structure is assumed to be a low rank
perturbation of the identity. 

Their results are developed as analogs of those for sample covariance
matrices originally given by \citet{paul07}.
Since then, 
a substantial literature has contributed more general
models and different technical methods.
A particular selection of these models and methods has been
adopted and 
adapted for the sample correlation analysis of
\citet{mjmy18}.
These notes collect,
for comparison purposes, the corresponding tools in the
simpler sample covariance case.

\bigskip

We begin with the version of the spiked model for non-Gaussian data 
used by \cite{bayo08}, in the special case of simple spikes.

\textbf{Model M.} \ 
Let $x_1, x_2, \ldots$ be a sequence of random vectors independently
and identically distributed as $x \in \mR^{m+p}$
and such that $\E \|x\|^4 <\infty$.
Let the $(m+p)\times n$ data matrix $X = [x_1, \ldots, x_n]$.
We partition
\begin{equation*}
  X =
  \begin{bmatrix}
    X_1 \\ X_2
  \end{bmatrix}, \qquad \quad 
  x =
  \begin{bmatrix}
    \xi \\ \eta
  \end{bmatrix}.
\end{equation*}
We assume that $\xi \in \mR^m$ has mean $0$ and covariance $\Sigma$
and is independent of $\eta \in \mR^p$, which has components 
that are i.i.d with mean $0$ and variance $1$. 
We suppose that $m$ is fixed, while $p$ and $n$ grow with
$\gamma_n = p/n \to \gamma > 0$.

Write the spectral decomposition of $\Sigma$ as
\begin{equation*}
  \Sigma = P \Lambda P^T, \qquad
  P = [p_{1}, \ldots, p_{m}], \qquad
  \Lambda = \diag(\ell_1, \ldots, \ell_m).
\end{equation*}
We suppose that eigenvalues $\ell_\nu$ of the population covariance matrix
$\Sigma$ satisfy
$$
\ell_1 > \cdots > \ell_{m_0} > 1 + \sqrt \gamma \geq \ell_{m_0 +1} \geq \cdots \ell_m,
$$
%


\bigskip
Let $S = n^{-1} X X^T$ be the sample covariance matrix, with
corresponding $\nu$th sample eigenvalue and eigenvector
\begin{equation*}
  S \mathfrak{u}_\nu = \lhat_\nu \mathfrak{u}_\nu.
\end{equation*}
Here $\hat{\ell}_1 \geq \hat{\ell}_2 \geq \cdots$ and we restrict
attention to indices $\nu = 1, \ldots, m$. 
Partition $S$ and $\mathfrak{u}_\nu$ as
\begin{equation}
   \label{eq:part}
  S =
  \begin{bmatrix}
    S_{11} & S_{12} \\ S_{21} & S_{22}
  \end{bmatrix}
    = n^{-1}
    \begin{bmatrix}
      X_1X_1^T & X_1X_2^T \\ X_2X_1^T & X_2X_2^T
    \end{bmatrix}, 
  \qquad 
  \mathfrak{u}_\nu =
  \begin{bmatrix}
    u_\nu \\ v_\nu
  \end{bmatrix}
\end{equation}

We recall some properties of the eigenvalues 
$\mu_1 \geq \cdots \geq \mu_p$ of the $p\times p$ `noise' covariance
matrix $S_{22}$. Their empirical spectral
distribution (ESD), denoted $F_p$, converges weakly to the
Marchenko-Pastur (MP) law $F_\gamma$ with probability one. 
\citep{siba95}. The law $F_\gamma$ is 
supported on $[a_\gamma,b_\gamma] = [(1-\sqrt \gamma)^2,(1+\sqrt
\gamma)^2]$ if $\gamma \leq 1$ and 
on $\{0\} \cup [a_\gamma,b_\gamma]$ when $\gamma > 1$. 
We will also use the $n \times n$ companion matrix $C_n = n^{-1} X_2^T
X_2$, whose nonzero eigenvalues coincide with those of $n^{-1} X_2
X_2^T$. The ESD of $C_n$, denoted ${\sf{F}}_n$, converges weakly
a.s. to the 
`companion MP law'
\begin{equation}
  \label{eq:companion}
   \ssf_\gamma = (1-\gamma)I_{[0,\infty)} + \gamma F_\gamma.  
\end{equation}

\citet{bayi93} established convergence of the extreme non-trivial eigenvalues:
\begin{equation}
      \label{eq:bai-yin}
  \mu_1 \stackrel{\rm a.s.}{\longrightarrow}  b_\gamma \quad \text{and} \quad
  \mu_{p \wedge n} \stackrel{\rm a.s.}{\longrightarrow} a_\gamma. 
\end{equation}

For $\ell > 1 + \sqrt \gamma$, the `phase transition', define
\begin{align*}
  \rho(\ell,\gamma) & = \ell + \gamma \frac{\ell}{\ell-1}, \\
  \dot \rho(\ell, \gamma)
  = \frac{\partial}{\partial \ell} \rho(\ell, \gamma)
  & = 1 - \frac{\gamma}{(\ell -1)^2}.
\end{align*}
We use the abbreviations
\begin{equation} \label{eq:rhodef}
  \rho_\nu = \rho(\ell_\nu,\gamma), \qquad \quad
  \rhonn = \rho(\ell_\nu, \gamma_n), \qquad \quad
  \dot \rho_\nu = \dot \rho(\ell_\nu, \gamma).
\end{equation}
\citet{basi06} showed that
\begin{equation}
  \label{eq:Baik-S}
  \lhat_\nu \stackrel{\rm a.s.}{\longrightarrow}
  \begin{cases}
    \rho_\nu > b_\gamma & \text{if } \quad \ell_\nu > 1 + \sqrt \gamma \\
    b_\gamma            & \text{if } \quad \ell_\nu \leq 1 + \sqrt \gamma.
  \end{cases}
\end{equation}
[There is a similar `outlier' phenomenon for $\ell_\nu < 1 - \sqrt
\gamma$ (when $\gamma < 1$) associated with the smallest sample
eigenvalues.  We are interested here in outliers
above the bulk, so we do not consider this case, and focus only on the
$m$ largest eigenvalues.]



\textit{Notation for cumulants.} \ The fourth order cumulant
tensor of $\xi$ can be defined through partial derivatives $\partial_j
= \partial/\partial t_j$ at $t = 0$ of the cumulant generating function
\begin{equation} \label{eq:k-def}
  \kappa_{jklm} = \partial_j \partial_k \partial_l \partial_m
                  \log \E [\exp (t^T\xi)] \vert_{t=0}.
\end{equation}
When $\xi$ has mean zero, this may be written in terms of moments:
\begin{equation*}
  \kappa_{jklm} = \E [\xi_j \xi_k \xi_l \xi_m]
     - \sigma_{jk}\sigma_{lm}-\sigma_{jl}\sigma_{km}-\sigma_{jm}\sigma_{kl},
\end{equation*}
where $\Sigma = (\sigma_{jk})$.
From \eqref{eq:k-def}, one sees that when $\xi$ is Gaussian, then
$\kappa_{jklm} \equiv 0$, while if $\xi$ has independent
components, then $\kappa_{jklm}$ vanishes unless $j=k=l=m$.


Now some notation for contractions of the cumulant tensor.
Let
\begin{equation}\label{eq:kapmumu}
  K^\nu_{jj'}
   := \sum_{k,k'} p_{\nu,k} p_{\nu, k'} \kappa_{jj'kk'},
\end{equation}
Let $\mathcal{P}^{\mu \mu' \nu \nu'}$ denote the array with components
$ p_{\mu,i} p_{\mu ',j} p_{\nu,i'} p_{\nu ',j'}$, 
and write
$\mathcal{P}^{\nu}$ for $\mathcal{P}^{\nu \nu \nu \nu}$.
For two arrays $\mathcal{P}$ and $A$ 
with the same indexing, set
\begin{align}
\label{eq:innerproduct}
[\mathcal{P},A] := \sum_{i,j,i',j'} \mathcal{P}_{iji'j'} A_{iji'j'}.
\end{align}


\begin{theorem}
  \label{th:eval-normality}
Assume Model M, and suppose that $\ell_\nu > 1 + \sqrt \gamma$  is a
simple eigenvalue. As $p/n \to \gamma > 0$, we have
\begin{equation*}
  \sqrt n(\lhat_\nu - \rhonn) \toodist N(0,\sigma_\nu^2),
\end{equation*}
where $\rhonn$ is given by (\ref{eq:rhodef}) and
\begin{equation}
  \label{eq:sig-def}
  \sigma_\nu^2 = \sigma^2(\ell_\nu,\gamma) 
    = 2 \ell_\nu^2 \dot \rho_\nu + \dot \rho_\nu^2 \, [\mathcal{P}^\nu, \kappa] .
\end{equation}
\end{theorem}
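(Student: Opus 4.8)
The plan is to collapse the $(m+p)$-dimensional eigenequation $S\mathfrak{u}_\nu = \lhat_\nu \mathfrak{u}_\nu$ to a fixed-dimensional one, and then to recognise $\sqrt{n}(\lhat_\nu - \rhonn)$ as, asymptotically, a deterministic multiple of a single quadratic form in the i.i.d.\ columns $\xi_1,\dots,\xi_n$ of $X_1$. Writing $\mathfrak{u}_\nu = (u_\nu^{T}, v_\nu^{T})^{T}$ and reading off the two block rows of $S\mathfrak{u}_\nu = \lhat_\nu\mathfrak{u}_\nu$: the second block gives $v_\nu = (\lhat_\nu I_p - S_{22})^{-1} S_{21} u_\nu$ --- legitimate once $\lhat_\nu > b_\gamma$, which holds with probability tending to one by \eqref{eq:bai-yin} and \eqref{eq:Baik-S} --- and substituting into the first block shows that $u_\nu$ is an eigenvector, with eigenvalue $\lhat_\nu$, of $M_n(\lhat_\nu)$, where $M_n(\ell) := S_{11} + S_{12}(\ell I_p - S_{22})^{-1} S_{21}$. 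Because $X_1 \perp X_2$, the push-through identity gives $S_{12}(\ell I_p - S_{22})^{-1} S_{21} = n^{-1} X_1[\ell(\ell I_n - C_n)^{-1} - I_n] X_1^{T}$, hence the clean form $M_n(\ell) = \ell\, n^{-1} X_1 (\ell I_n - C_n)^{-1} X_1^{T}$. In particular, with $\zeta := X_1^{T} p_\nu \in \mR^{n}$ --- so the $\zeta_s = p_\nu^{T}\xi_s$ are i.i.d., independent of $C_n$, with mean $0$, variance $p_\nu^{T}\Sigma p_\nu = \ell_\nu$, and fourth moment $3\ell_\nu^{2} + [\mathcal{P}^\nu, \kappa]$ by the moment--cumulant formula --- we have $p_\nu^{T} M_n(\ell) p_\nu = \ell\, n^{-1}\zeta^{T}(\ell I_n - C_n)^{-1}\zeta$.

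Next I would reduce to a scalar equation. Since $M_n(\ell)$ is $m\times m$ with $m$ fixed, $M_n(\ell) = (1 + g_n(\ell))\Sigma + E_n(\ell)$, where $1 + g_n(\ell) := \ell\, n^{-1}\tr(\ell I_n - C_n)^{-1}$ (the conditional mean given $X_2$, divided by $\Sigma$) and $\|E_n(\ell)\| = O_p(n^{-1/2})$; so by standard eigenvalue perturbation $\lhat_\nu$ is, with probability tending to one, the unique root near $\rho_\nu$ of $\theta_\nu(\ell) = \ell$, where $\theta_\nu(\ell)$ is the eigenvalue of $M_n(\ell)$ lying near $(1+g_n(\ell))\ell_\nu$ and obeys $\theta_\nu(\ell) = p_\nu^{T} M_n(\ell) p_\nu + O_p(n^{-1})$. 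I would then invoke the companion-MP description \eqref{eq:companion}: $n^{-1}\tr(\ell I_n - C_n)^{-1} \to \check{s}(\ell) := \int(\ell - x)^{-1}\,d\ssf_\gamma(x)$ and $n^{-1}\tr(\ell I_n - C_n)^{-2} \to -\check{s}'(\ell) = \int(\ell - x)^{-2}\,d\ssf_\gamma(x)$ a.s.\ for $\ell > b_\gamma$; the spike identity $\ell_\nu\,\check{s}(\rho_\nu) = 1$ underlying \eqref{eq:Baik-S}, which makes $\rho_\nu = \rho(\ell_\nu,\gamma)$; the relation $\check{s}'(\rho_\nu) = -1/(\ell_\nu^{2}\dot\rho_\nu)$, obtained by differentiating $\check{s}(\rho(\ell,\gamma)) = 1/\ell$ in $\ell$; and the analogue $\ell_\nu\,\check{s}_{\gamma_n}(\rhonn) = 1$ with $\rhonn = \rho(\ell_\nu,\gamma_n)$, together with the standard rate bound $\E\, n^{-1}\tr(\rhonn I_n - C_n)^{-1} = \ell_\nu^{-1} + O(n^{-1})$.

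From here a one-term Taylor expansion at $\rhonn$ finishes the reduction. The limiting slope is $\theta_\nu'(\rho_\nu) - 1 \to \ell_\nu[\check{s}(\rho_\nu) + \rho_\nu\check{s}'(\rho_\nu)] - 1 = -\rho_\nu/(\ell_\nu\dot\rho_\nu) \ne 0$, so the implicit function theorem gives $\sqrt{n}(\lhat_\nu - \rhonn) = (\ell_\nu\dot\rho_\nu/\rho_\nu)\sqrt{n}(\theta_\nu(\rhonn) - \rhonn) + o_p(1)$; writing $H := (\rhonn I_n - C_n)^{-1}$ and using the centering $\ell_\nu\,\check{s}_{\gamma_n}(\rhonn) = 1$ with the $O(n^{-1})$ bounds above (and $\tr H - \E\tr H = O_p(1)$) to discard the centering remainder, $\sqrt{n}(\theta_\nu(\rhonn) - \rhonn) = \rhonn\, n^{-1/2}(\zeta^{T} H \zeta - \ell_\nu\tr H) + o_p(1)$. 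Everything thus reduces to a CLT for $n^{-1/2}(\zeta^{T} H \zeta - \ell_\nu\tr H)$ with $\zeta \perp H$ and $\zeta$ having i.i.d.\ (non-Gaussian) components. Conditioning on $C_n$, decompose $\zeta^{T} H \zeta - \ell_\nu\tr H = \sum_s H_{ss}(\zeta_s^{2} - \ell_\nu) + \sum_{s\ne t} H_{st}\zeta_s\zeta_t$ and apply a martingale CLT (revealing $\zeta_1,\dots,\zeta_n$ successively; valid under $\E\|x\|^{4} < \infty$ after the usual truncation) to get conditional asymptotic normality with conditional variance $n^{-1}[\, 2\ell_\nu^{2}\tr(H^{2}) + [\mathcal{P}^\nu,\kappa]\sum_s H_{ss}^{2}\,]$, since $\mathrm{Var}(\zeta_s^{2}) = 2\ell_\nu^{2} + [\mathcal{P}^\nu,\kappa]$. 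By the companion-MP limit $n^{-1}\tr(H^{2}) \to \int(\rho_\nu - x)^{-2}\,d\ssf_\gamma(x) = 1/(\ell_\nu^{2}\dot\rho_\nu)$, and by concentration of the diagonal resolvent entries $H_{ss}$ about their common limit $\check{s}(\rho_\nu) = \ell_\nu^{-1}$, $n^{-1}\sum_s H_{ss}^{2} \to \ell_\nu^{-2}$; hence the conditional variance converges to the constant $2/\dot\rho_\nu + [\mathcal{P}^\nu,\kappa]/\ell_\nu^{2}$, so the conditional CLT upgrades to an unconditional one. Multiplying by $(\ell_\nu\dot\rho_\nu)^{2}$ (using $\rhonn/\rho_\nu \to 1$) gives $\sigma_\nu^{2} = \ell_\nu^{2}\dot\rho_\nu^{2}(2/\dot\rho_\nu + [\mathcal{P}^\nu,\kappa]/\ell_\nu^{2}) = 2\ell_\nu^{2}\dot\rho_\nu + \dot\rho_\nu^{2}[\mathcal{P}^\nu,\kappa]$, matching \eqref{eq:sig-def}.

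The algebra of the first paragraph and the deterministic spike identities of the second are routine. The hard part will be the last step: the CLT for the quadratic form $n^{-1/2}(\zeta^{T} H \zeta - \ell_\nu\tr H)$ and, especially, pinning down its variance --- in particular, showing that the diagonal resolvent entries $H_{ss}$ concentrate about $\ell_\nu^{-1}$ sharply enough that $n^{-1}\sum_s H_{ss}^{2}$ converges. That term carries the fourth-cumulant contribution $\dot\rho_\nu^{2}[\mathcal{P}^\nu,\kappa]$; it does not appear in the Gaussian treatment of \citet{paul07}, where rotational invariance replaces any appeal to the diagonal structure of $H$. Carrying this through under only $\E\|x\|^{4} < \infty$ forces a truncation-and-recentering argument in the style of \citet{bayo08} and \citet{bayi93}. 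A secondary nuisance is the uniform-in-$\ell$ control of $\theta_\nu$ and $\theta_\nu'$ on a neighbourhood of $\rho_\nu$ needed to justify the Taylor step and to suppress the centering remainder $\ell_\nu n^{-1}\tr H - 1 = O_p(n^{-1})$.
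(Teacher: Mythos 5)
Your proposal is correct in outline and rests on the same pillars as the paper's argument --- the Schur-complement reduction $K(\ell)=n^{-1}X_1B_n(\ell)X_1^T$ with $B_n$ a function of $X_2$ alone, the Stieltjes-transform identities of Appendix A, control of the random centering via linear spectral statistics, and concentration of the diagonal resolvent entries to produce the fourth-cumulant term --- but your middle section is packaged differently, and the comparison is instructive. Where the paper squeezes the scalar equation out of $p_\nu^T[K(\lhat_\nu)-\lhat_\nu I_m]p_\nu=(a_\nu-p_\nu)^T[K(\lhat_\nu)-\lhat_\nu I_m](a_\nu-p_\nu)=O_p(n^{-1})$, which requires the eigenvector tightness $a_\nu-p_\nu=O_p(n^{-1/2})$ of Lemma \ref{lem:tightness} (hence the perturbation lemma of Appendix C), you instead apply second-order perturbation to the eigenvalue map $\ell\mapsto\theta_\nu(\ell)$ of $M_n(\ell)=(1+g_n(\ell))\Sigma+E_n(\ell)$ and Taylor-expand the fixed-point equation $\theta_\nu(\ell)=\ell$; this bypasses eigenvector control entirely for the eigenvalue CLT, at the cost of the uniform-in-$\ell$ control of $\theta_\nu,\theta_\nu'$ that you flag, which is exactly the role played by the interval tightness \eqref{eq:Ktight} and the resolvent-identity expansion \eqref{eq:Kdiff2} in the paper. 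Your limiting slope $1-\theta_\nu'(\rho_\nu)=\rho_\nu/(\ell_\nu\dot\rho_\nu)$ is the paper's $1+c(\rho_\nu)\ell_\nu$ of \eqref{eq:slutsky}; your ``standard rate bound'' for $n^{-1}\tr H$ against $\check{s}_{\gamma_n}(\rhonn)=1/\ell_\nu$ is precisely where the paper invokes the \citet{basi04} CLT for $G_n(g_\rho)$ in \eqref{eq:Wdecomp}, so it is an appeal to the same nontrivial input, not a free lunch; and your $n^{-1}\sum_s H_{ss}^2\to\ell_\nu^{-2}$ is the content of Lemma \ref{lem:omegaval} (the paper's $b_{n,ii}=\rhonn H_{ii}$, giving $\omega_\nu=(\rho_\nu/\ell_\nu)^2$), proved there by a Woodbury/exchangeability argument you would need to reproduce. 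Finally, where the paper routes the fluctuation through the matrix-level Bai--Yao CLT (Proposition \ref{prop:matCLT}, via the Baik--Silverstein martingale argument of Appendix B), you apply a martingale CLT conditionally on $C_n$ directly to the single form $\zeta^TH\zeta$; your conditional variance $n^{-1}[2\ell_\nu^2\tr H^2+[\mathcal{P}^\nu,\kappa]\sum_sH_{ss}^2]$ and the resulting $\sigma_\nu^2=2\ell_\nu^2\dot\rho_\nu+\dot\rho_\nu^2[\mathcal{P}^\nu,\kappa]$ check out against \eqref{eq:var-Rnunu}, \eqref{eq:th-om-nu} and \eqref{eq:sig-def}. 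Net effect: your route is a legitimate, arguably leaner path to Theorem \ref{th:eval-normality} alone, while the paper's heavier machinery (matrix CLT, eigenvector tightness) is reused for Theorem \ref{th:evec-normality}, which your scalar reduction would not serve.
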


Let $\mathfrak{p}_\nu$ denote the $\nu$th population eigenvector. From
 independence assumed in Model M,
\begin{equation*}
  \text{Cov}(x) = \text{Cov}   \begin{bmatrix}
    \xi \\ \eta
  \end{bmatrix}
  =
  \begin{pmatrix}
    \Sigma & 0 \\
    0 & I_p
  \end{pmatrix}, \qquad
  \mathfrak{p}_\nu =
  \begin{bmatrix}
    p_\nu \\ 0
  \end{bmatrix}.
\end{equation*}
Recalling \eqref{eq:part}, let $a_\nu = u_\nu/ \|u_\nu \|$ denote the
normalized subvector of $\mathfrak{u}_\nu$ restricted to the first $m$
co-ordinates. 
The vector $P^T a_\nu = ( p_1^T a_\nu, \ldots, p_{m}^T a_\nu)$ contains
the (signed magnitudes of) the projections of $a_\nu$ on each of the
population eigenvectors.
Being vectors of fixed dimension $m$, both $a_\nu$ and $P^T a_\nu$ are
consistent:
\begin{equation*}
  a_\nu \toas p_\nu, \qquad \quad
  P^T a_\nu \toas e_\nu,
\end{equation*}
where $e_\nu$ is the $\nu$th co-ordinate basis vector in $\mR^n$.

However, in the high dimensional setting, consistency of the
subvectors does not imply that of the full sample eigenvector
$\mathfrak{u_\nu}$:

\begin{theorem}
\label{th:cosines}
The squared inner product between $\nu$th sample and population eigenvector
\begin{equation}
  \label{eq:cos2}
  \langle \mathfrak{u}_\nu, \mathfrak{p}_\nu \rangle^2
   = \langle u_\nu, p_\nu \rangle^2
     \tooas
     \begin{cases}
\dfrac{\ell_\nu \dot \rho_\nu}{\rho_\nu}
     = \dfrac{1-\gamma/(\ell_\nu-1)^2}{1+\gamma/(\ell_\nu-1)}
       & \quad \text{if } \quad \ell_\nu > 1 + \sqrt \gamma \\
     0           & \quad \text{if } \quad \ell_\nu \leq 1 + \sqrt \gamma.
     \end{cases}
\end{equation}
\end{theorem}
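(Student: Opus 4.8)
The plan is to derive a fixed-point / secular equation for the sample eigenvalue $\lhat_\nu$ and then read off the limiting squared cosine from its "residue." Partitioning as in \eqref{eq:part}, the eigen-equation $S\mathfrak{u}_\nu = \lhat_\nu \mathfrak{u}_\nu$ splits into
\begin{align*}
  S_{11} u_\nu + S_{12} v_\nu &= \lhat_\nu u_\nu, \\
  S_{21} u_\nu + S_{22} v_\nu &= \lhat_\nu v_\nu.
\end{align*}
Since $\lhat_\nu \toas \rho_\nu > b_\gamma$ lies outside the support of the MP law, $\lhat_\nu I_p - S_{22}$ is invertible with high probability, so the second line gives $v_\nu = (\lhat_\nu I_p - S_{22})^{-1} S_{21} u_\nu$. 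Substituting into the first line yields the $m\times m$ reduced eigen-equation
\begin{equation*}
  \bigl[\, S_{11} + S_{12}(\lhat_\nu I_p - S_{22})^{-1} S_{21} - \lhat_\nu I_m \,\bigr] u_\nu = 0 .
\end{equation*}
Writing $S_{12} = n^{-1} X_1 X_2^T$ etc., the middle term is $n^{-2} X_1 X_2^T (\lhat_\nu I_p - n^{-1}X_2 X_2^T)^{-1} X_2 X_1^T$, which by the resolvent identity and the companion-matrix reformulation can be written through $C_n = n^{-1}X_2^T X_2$ as $n^{-1} X_1\, X_2^T X_2 (n\lhat_\nu I_n - X_2^T X_2)^{-1} n^{-1} X_1^T$, i.e. a quadratic form in $X_1$ against a function of $C_n$. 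Conditionally on $X_2$, the entries of $X_1$ (equivalently of $\xi$ and $\eta$) are independent of it, so standard quadratic-form concentration (trace lemmas, as in \citet{bayo08}) gives, for each fixed $z = \lhat_\nu$ outside the bulk,
\begin{equation*}
  S_{11} \toas \Sigma, \qquad
  S_{12}(zI_p - S_{22})^{-1}S_{21} \toas \theta(z)\, \Sigma,
  \qquad \theta(z) := \int \frac{t}{z-t}\, d\ssf_\gamma(t),
\end{equation*}
using the companion MP law \eqref{eq:companion}; here I use that the relevant weighting of $C_n$'s spectrum converges to $\int t/(z-t)\,d\ssf_\gamma(t)$. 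Hence the reduced pencil converges a.s. to $\Sigma(1+\theta(z)) - z I_m = P[\,\Lambda(1+\theta(z)) - z I_m\,]P^T$, and the secular equation becomes $\ell_\nu(1+\theta(\rho_\nu)) = \rho_\nu$, which one checks is exactly the defining relation for $\rho_\nu = \rho(\ell_\nu,\gamma)$; in particular $u_\nu$ aligns a.s. with $p_\nu$, recovering $a_\nu \toas p_\nu$.

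To get the squared norm $\|u_\nu\|^2 = \langle \mathfrak{u}_\nu,\mathfrak{p}_\nu\rangle^2$ (using $\mathfrak{p}_\nu = (p_\nu,0)^T$ and $\|\mathfrak{u}_\nu\|=1$), I use the normalization $\|u_\nu\|^2 + \|v_\nu\|^2 = 1$ together with $v_\nu = (\lhat_\nu I_p - S_{22})^{-1}S_{21}u_\nu$, so that
\begin{equation*}
  \|u_\nu\|^{-2} = 1 + u_\nu^T S_{12}(\lhat_\nu I_p - S_{22})^{-2} S_{21} u_\nu / \|u_\nu\|^2 .
\end{equation*}
The new quadratic form involves the \emph{squared} resolvent, whose normalized trace against the MP weighting converges a.s. to $\int t/(z-t)^2\, d\ssf_\gamma(t) = \theta'(z)$ (differentiate $\theta$ in $z$, up to sign); more precisely $u_\nu^T S_{12}(zI_p-S_{22})^{-2}S_{21}u_\nu/\|u_\nu\|^2 \toas \ell_\nu\,(-\theta'(z))$ evaluated at $z=\rho_\nu$. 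This gives $\|u_\nu\|^{-2} \toas 1 - \ell_\nu \theta'(\rho_\nu)$. It then remains a calculus exercise: from $\rho(\ell,\gamma) = \ell + \gamma\ell/(\ell-1)$ one has $\theta(\rho_\nu) = \rho_\nu/\ell_\nu - 1 = \gamma/(\ell_\nu-1)$, and differentiating the identity $\ell_\nu(1+\theta(z)) = z$ along $z=\rho(\ell_\nu,\gamma)$, or direct computation with $d\ssf_\gamma$, yields $-\ell_\nu\theta'(\rho_\nu) = \gamma/((\ell_\nu-1)^2\dot\rho_\nu) \cdot (\text{factor})$; assembling, one obtains
\begin{equation*}
  \|u_\nu\|^2 \toas \frac{\ell_\nu \dot\rho_\nu}{\rho_\nu}
  = \frac{1-\gamma/(\ell_\nu-1)^2}{1+\gamma/(\ell_\nu-1)},
\end{equation*}
as claimed. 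For $\ell_\nu \le 1+\sqrt\gamma$, \eqref{eq:Baik-S} gives $\lhat_\nu\toas b_\gamma$, an edge point where $\theta'(b_\gamma)=-\infty$ (the derivative of the Stieltjes transform blows up at the soft edge), forcing $\|u_\nu\|^{-2}\to\infty$, i.e. $\langle u_\nu,p_\nu\rangle^2\toas 0$; a short separate argument via the Bai--Yin bound \eqref{eq:bai-yin} handles the rigor here since $\lhat_\nu$ then approaches the support and the resolvent is no longer uniformly bounded.

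The main obstacle is justifying the a.s. convergence of the quadratic forms at the \emph{random} point $z=\lhat_\nu$ rather than at a fixed $z$: one needs a locally uniform (in $z$) version of the trace-lemma convergence on a neighborhood of $\rho_\nu$ that excludes the bulk, combined with $\lhat_\nu\toas\rho_\nu$ and the no-eigenvalue-outside-the-support result, so that one may substitute the limit. This is the standard but technically delicate step; for the squared-resolvent form it additionally requires control of $(\lhat_\nu I_p - S_{22})^{-1}$ in operator norm, i.e. that $\lhat_\nu$ stays bounded away from $b_\gamma$, which again follows from \eqref{eq:Baik-S}. Everything else is either linear algebra (the Schur-complement reduction) or the deterministic calculus identities relating $\theta,\theta',\rho,\dot\rho$.
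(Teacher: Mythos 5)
Your supercritical argument is essentially the paper's: the same Schur-complement reduction to $K(\lhat_\nu)u_\nu=\lhat_\nu u_\nu$ and the normalization $a_\nu^T(I_m+Q_\nu)a_\nu=\|u_\nu\|^{-2}$ as in \eqref{eq:eqn-pair}, with your $-\theta'(\rho_\nu)$ equal to the paper's $c(\rho_\nu)$ of \eqref{eq:crhonu} and the same identity $1+\ell_\nu c(\rho_\nu)=\rho_\nu/(\ell_\nu\dot\rho_\nu)$; the two technical points you flag (substituting the random point $\lhat_\nu$ via resolvent identities plus $\lhat_\nu\toas\rho_\nu$, and the alignment $a_\nu\toas p_\nu$, which the paper gets from the fixed-dimension perturbation Lemma \ref{lemma:evec_perturb_bound}) are handled in the paper exactly along the lines you indicate, so that half is fine as a sketch.

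The genuine gap is in the subcritical case. Your argument is ``$\theta'(b_\gamma)=-\infty$ forces $\|u_\nu\|^{-2}\to\infty$, and a short separate argument via Bai--Yin \eqref{eq:bai-yin} supplies the rigor,'' but Bai--Yin is not the missing ingredient and no substitution-of-the-limit argument applies here: the quadratic-form convergence tool (Lemma \ref{lem:quad-form-as}) requires $\|B_n\|$ bounded a.s., which fails precisely because $\lhat_\nu\toas b_\gamma$ sits at the edge of the bulk, and the desired conclusion is divergence rather than convergence to a finite limit. Moreover the direction $a_\nu$ is random and does \emph{not} converge to $p_\nu$ in this regime, so a pointwise statement about $p_\nu^TQ_\nu p_\nu$ would not suffice; one needs a lower bound uniform over directions. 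The paper's device is a monotone regularization: $Q_\nu\succ Q_{\nu\epsilon}(\lhat_\nu)$ with $Q_{\nu\epsilon}(t)=S_{12}[(tI_p-S_{22})^2+\epsilon^2I_p]^{-1}S_{21}$, whose kernel has operator norm $\le\epsilon^{-2}\mu_1$, so the trace lemma gives $Q_{\nu\epsilon}(b_\gamma)\toas c_\gamma(\epsilon)\Sigma$; a Lipschitz bound $|\partial_t f_\epsilon|\le\mu/\epsilon^3$ shows $Q_{\nu\epsilon}(\lhat_\nu)-Q_{\nu\epsilon}(b_\gamma)\toas 0$; hence $\liminf\lmin(Q_\nu)\ge c_\gamma(\epsilon)\lmin(\Sigma)$, and letting $\epsilon\downarrow 0$ (where $c_\gamma(\epsilon)\uparrow\infty$, your divergence of $-\theta'$ at the edge) gives $\lmin(Q_\nu)\toas\infty$ and so $\|u_\nu\|\toas 0$. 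Some argument of this kind (regularize, bound below in the Loewner order, take $\epsilon\to 0$ last) is needed to make your edge-divergence heuristic into a proof; as written, that step would not go through.
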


The asymptotic fluctuation of the normalized subvectors $a_\nu =
u_\nu/ \|u_\nu\|$ is determined in part by the separation of
$\ell_\nu$ from the other spikes $\ell_\mu$. Define the diagonal
matrix
\begin{equation}
  \mathcal{D}_\nu
     =  \sum_{\mu \neq \nu}^{m} (\ell_{\nu}-\ell_{\mu})^{-1}
  e_{\mu} e_{\mu}^T.  \label{eq:Dnudef}
\end{equation}

\begin{theorem}
  \label{th:evec-normality}
Assume Model M, and suppose that $\ell_\nu > 1 + \sqrt \gamma$  is a
simple eigenvalue.  
As $p/n \to \gamma$, we have 
\begin{align}
  \sqrt n(P^T a_\nu- e_\nu)
     & \toodist N(0, \mathcal{D}_\nu \tilde{\Sigma}_\nu
       \mathcal{D}_\nu)  \notag \\ 
  \sqrt n( a_\nu- p_\nu)
     & \toodist N(0, P \mathcal{D}_\nu \tilde{\Sigma}_\nu
       \mathcal{D}_\nu P^T),  \notag 
       \intertext{where}
(\tilde \Sigma_\nu)_{\mu \mu'} 
  &=   \dot \rho_\nu^{-1} \ell_{\nu} \ell_{\mu}  \,
    \delta_{\mu, \mu'} +  [\mathcal{P}^{\mu  \mu'  \nu \nu}, \kappa].
    \label{eq:Sigma-til}
\end{align}
Here $\delta_{\mu, \mu'} = 1$ if $\mu = \mu'$ and $0$ otherwise
Note that the second term of \eqref{eq:Sigma-til} vanishes if either $\xi$ is
Gaussian or has independent components. 
\end{theorem}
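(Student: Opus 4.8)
\bigskip
\noindent\textit{Proof strategy for Theorem~\ref{th:evec-normality}.}\ \
The plan is to follow \citet{paul07}: reduce the $(m+p)$-dimensional eigenproblem to an $m\times m$ one by a Schur complement, linearise the reduced equation about its deterministic skeleton, and finish with a central limit theorem, conditional on $X_2$, for the bilinear forms in the columns of $X_1$ that appear. Since $\lhat_\nu\toas\rho_\nu>b_\gamma$ while $\mu_1\toas b_\gamma$ by \eqref{eq:bai-yin} and $\rho(\ell,\gamma)$ is increasing in $\ell$ on $(1+\sqrt\gamma,\infty)$ (so $\rho_1>\cdots>\rho_{m_0}$), with probability tending to one $\lhat_\nu\notin\mathrm{spec}(S_{22})$ and $\lhat_\nu$ is a simple eigenvalue of $S$ separated from the others. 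On this event the Schur complement of $zI_p-S_{22}$ in $zI_{m+p}-S$ shows that $u_\nu$ is an eigenvector of $K_n(z)=S_{11}+S_{12}(zI_p-S_{22})^{-1}S_{21}$ with eigenvalue $z$, at $z=\lhat_\nu$; the companion identities $X_2^T(zI_p-S_{22})^{-1}=(zI_n-C_n)^{-1}X_2^T$ and $C_n(zI_n-C_n)^{-1}=z(zI_n-C_n)^{-1}-I_n$ collapse $K_n(z)$ to $z\,M_n(z)$, where $M_n(z):=n^{-1}X_1(zI_n-C_n)^{-1}X_1^T$, so that the eigen-equation reads simply $M_n(\lhat_\nu)u_\nu=u_\nu$. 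I then pass to population coordinates: with $\zeta_i=P^T\xi_i$ (so $\E\zeta_i\zeta_i^T=\Lambda$), $R(z)=(zI_n-C_n)^{-1}$, $\widetilde M_n(z)=P^TM_n(z)P=n^{-1}\sum_{i,j}R_{ij}(z)\,\zeta_i\zeta_j^T$ and $c_\nu=P^Ta_\nu$ (a unit vector with $c_\nu\toas e_\nu$), the problem becomes $\widetilde M_n(\lhat_\nu)c_\nu=c_\nu$.

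Conditionally on $X_2$ one has $\E[\widetilde M_n(z)\mid X_2]=\underline m_n(z)\Lambda$ with $\underline m_n(z)=n^{-1}\tr R(z)\toas\underline s(z)$, the Stieltjes transform of the companion law $\ssf_\gamma$ of \eqref{eq:companion}. Two facts will be used: $\underline s(\rho_\nu)=\ell_\nu^{-1}$ (a reformulation of the definition of $\rho(\ell,\gamma)$) and, by differentiation of $\underline s(\rho(\ell,\gamma))=\ell^{-1}$, $\underline s'(\rho_\nu)=-\ell_\nu^{-2}\dot\rho_\nu^{-1}$. Write $\widetilde M_n(z)=\underline m_n(z)\Lambda+\widetilde E_n(z)$ with $\widetilde E_n$ of zero conditional mean; using $\E\|\xi\|^4<\infty$ and $\|C_n\|\toas b_\gamma<\rho_\nu$ one checks that $\widetilde E_n(z)$ and its $z$-derivative are $O_p(n^{-1/2})$ uniformly for $z$ near $\rho_\nu$. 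Since $\ell_\nu$ is simple, $I-\underline s(\rho_\nu)\Lambda$ is invertible on $e_\nu^\perp$ with inverse $\ell_\nu\mathcal{D}_\nu$ there. Decomposing $c_\nu=c_{\nu\nu}e_\nu+c_\nu^\perp$ and projecting $\widetilde M_n(\lhat_\nu)c_\nu=c_\nu$ onto $e_\nu^\perp$, the operator $I-\Pi\widetilde M_n(\lhat_\nu)\Pi$ (with $\Pi$ the projection onto $e_\nu^\perp$) converges to $I-\underline s(\rho_\nu)\Lambda$ and is thus invertible with probability $\to1$, giving directly $c_\nu^\perp=O_p(n^{-1/2})$ and hence $c_{\nu\nu}=1-O_p(n^{-1})$ from $\|c_\nu\|=1$. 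Replacing $\lhat_\nu$ by $\rho_\nu$ (legitimate by the uniform derivative bound and $\lhat_\nu-\rho_\nu=o_p(1)$) yields
\begin{equation*}
 \sqrt n\,(P^Ta_\nu-e_\nu)\;=\;\ell_\nu\,\mathcal{D}_\nu\,\sqrt n\,P^TM_n(\rho_\nu)\,p_\nu\;+\;o_p(1),
\end{equation*}
the $e_\nu$-component contributing only $\sqrt n(c_{\nu\nu}-1)=o_p(1)$, consistently with $\mathcal{D}_\nu e_\nu=0$.

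It remains to establish joint asymptotic normality of the coordinates $\mu\neq\nu$ of $\sqrt n\,P^TM_n(\rho_\nu)p_\nu$. With $R=R(\rho_\nu)$,
\begin{equation*}
 p_\mu^TM_n(\rho_\nu)\,p_\nu\;=\;n^{-1}\sum_i R_{ii}\,\zeta_{i\mu}\zeta_{i\nu}\;+\;n^{-1}\sum_{i\neq j}R_{ij}\,\zeta_{i\mu}\zeta_{j\nu},
\end{equation*}
and for $\mu\neq\nu$ both terms have zero conditional mean. Conditionally on $X_2$ the first is a sum of i.i.d.\ summands and the second a centred bilinear form, so a Lindeberg/martingale CLT of the type used by \citet{paul07,bayo08}, combined with the Cram\'er--Wold device, will give joint normality over $\mu\neq\nu$ once the conditional covariances are shown to converge to a deterministic limit. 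Using the moment--cumulant identity $\E[\zeta_{i\mu}\zeta_{i\nu}\zeta_{i\mu'}\zeta_{i\nu}]=[\mathcal{P}^{\mu\mu'\nu\nu},\kappa]+\ell_\mu\ell_\nu\delta_{\mu\mu'}$ (for $\mu,\mu'\neq\nu$, using full symmetry of $\kappa$), the concentration of the diagonal entries of $R$ about $\underline m_n(\rho_\nu)$ (whence $n^{-1}\sum_i R_{ii}^2\toprob\underline s(\rho_\nu)^2$), and $n^{-1}\sum_{i\neq j}R_{ij}^2=n^{-1}\tr R^2-n^{-1}\sum_i R_{ii}^2\toprob-\underline s'(\rho_\nu)-\underline s(\rho_\nu)^2$ (the cross-term between the two sums vanishing by independence across $i$), one obtains
\begin{equation*}
 n\,\mathrm{Cov}\!\big(p_\mu^TM_n(\rho_\nu)p_\nu,\ p_{\mu'}^TM_n(\rho_\nu)p_\nu\mid X_2\big)\;\toprob\;\ell_\nu^{-2}[\mathcal{P}^{\mu\mu'\nu\nu},\kappa]+\ell_\mu\ell_\nu^{-1}\dot\rho_\nu^{-1}\delta_{\mu\mu'}\;=:\;V_{\mu\mu'}.
\end{equation*}
Since $V$ is deterministic the conditional CLT is also unconditional; substituting into the displayed linearisation and noting $\ell_\nu^2V_{\mu\mu'}=(\tilde\Sigma_\nu)_{\mu\mu'}$ gives $\sqrt n(P^Ta_\nu-e_\nu)\toodist N(0,\mathcal{D}_\nu\tilde\Sigma_\nu\mathcal{D}_\nu)$. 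The statement for $\sqrt n(a_\nu-p_\nu)$ follows on left-multiplying by the orthogonal $P$, since $a_\nu=Pc_\nu$ and $p_\nu=Pe_\nu$; and the second term of \eqref{eq:Sigma-til} vanishes in the Gaussian and independent-components cases precisely because then $[\mathcal{P}^{\mu\mu'\nu\nu},\kappa]=0$ for the indices $\mu,\mu'\neq\nu$ that $\mathcal{D}_\nu\tilde\Sigma_\nu\mathcal{D}_\nu$ accesses.

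The main obstacle is the last step: the conditional CLT for the linear-plus-bilinear forms in the $\zeta_i$ together with the stabilisation of their conditional covariance, which forces one to control these forms uniformly over the random resolvent weights $R_{ij}(\rho_\nu)$ and to track exactly which moment/cumulant contractions survive. In particular $[\mathcal{P}^{\mu\mu'\nu\nu},\kappa]$ enters solely through the diagonal $i=j$ term, whereas $\dot\rho_\nu^{-1}$ emerges from the off-diagonal term via $-\underline s'(\rho_\nu)-\underline s(\rho_\nu)^2=\ell_\nu^{-2}(\dot\rho_\nu^{-1}-1)$. The Schur reduction, the companion identities, the perturbation bookkeeping, and the diagonal-resolvent concentration are otherwise routine.
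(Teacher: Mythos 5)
Your proposal is correct in substance and reproduces the theorem's covariance, but it organizes the two hard steps differently from the paper, essentially following \citet{paul07} rather than the paper's Bai--Yao route. For the linearisation, the paper applies the second-order perturbation lemma of Appendix C to $A=(\rhonn/\ell_\nu)\Sigma$ and $B=D_\nu=K(\lhat_\nu)-(\rhonn/\ell_\nu)\Sigma$, expands $D_\nu$ via \eqref{eq:main2} and \eqref{eq:delK}, and kills the $\Sigma$-direction through $\cR_{\nu n}\Sigma p_\nu=0$; you instead project the reduced eigen-equation $\widetilde M_n(\lhat_\nu)c_\nu=c_\nu$ onto $e_\nu^\perp$ and invert $I-\Pi\widetilde M_n\Pi$ directly, centering at the random trace $\underline m_n(z)\Lambda$. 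That centering is a genuine economy: the entire trace fluctuation lives in the direction annihilated by $\mathcal D_\nu$, so you never need the linear-spectral-statistics bound $G_n(g_\rho)=O_p(1)$ from \citet{basi04} nor any bookkeeping of $\gamma_n$ versus $\gamma$, which the paper routes through \eqref{eq:gen(24)} and \eqref{eq:Dnud0} (it needs them anyway for Theorem \ref{th:eval-normality}). For the fluctuation step, the paper invokes the unconditional CLT of Proposition \ref{prop:clt-r} (Bai--Yao, Appendix B) and then computes $\Gamma_\nu=\cRn\,\mathrm{Cov}(R^\nu p_\nu)\,\cRn$; you propose a CLT conditional on $X_2$ for the diagonal-plus-off-diagonal bilinear forms and convergence of the conditional covariance. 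These are the same computation in different clothing: your split into $n^{-1}\sum_i R_{ii}\zeta_{i\mu}\zeta_{i\nu}$ plus an off-diagonal martingale part is the Baik--Silverstein decomposition \eqref{eq:single-decomp}, your limit $n^{-1}\sum_i R_{ii}^2\to\underline s(\rho_\nu)^2$ is Lemma \ref{lem:omegaval} (the constant $\omega_\nu$), and $n^{-1}\tr R^2\to-\underline s'(\rho_\nu)$ is \eqref{eq:thetanu} (the constant $\theta_\nu$); your $V_{\mu\mu'}$ satisfies $\ell_\nu^2V_{\mu\mu'}=(\tilde\Sigma_\nu)_{\mu\mu'}$ on the indices $\mu,\mu'\neq\nu$ that $\mathcal D_\nu$ sees, so the answer matches. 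The ingredients you defer --- uniform $O_p(n^{-1/2})$ control of $\widetilde E_n$ and its $z$-derivative near $\rho_\nu$, the conditional Lindeberg/martingale CLT, and the diagonal-resolvent concentration --- are exactly the content of Lemma \ref{lem:basi-b26}/Lemma \ref{lem:tightness}, Theorem \ref{th:bybs}, and Lemma \ref{lem:omegaval}, so nothing you rely on is out of reach; the one point to state carefully is that the substitution of $\rho_\nu$ for $\lhat_\nu$ must be performed only inside $\Pi\widetilde E_n(\cdot)e_\nu$, after the exact cancellation of the $\underline m_n(\cdot)\Lambda e_\nu$ term, since only there does the error $(\lhat_\nu-\rho_\nu)\cdot O_p(n^{-1/2})=o_p(n^{-1/2})$ go through without any rate assumption on $\gamma_n-\gamma$.
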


Let us compare these statements with Paul's results, 
which imposed the extra assumptions that $\xi$ and
$\eta$ be Gaussian, with the components of $\xi$ independent, 
 $\gamma_n - \gamma = o(n^{-1/2})$ and $\gamma <
 1$.
The kurtosis terms drop out, $P = I_m$ and the asymptotic covariance
agrees with Paul:
\begin{equation*}
  \mathcal{D}_\nu \tilde{\Sigma}_\nu \mathcal{D}_\nu
  = \frac{1}{\dot \rho_\nu} \sum_{\mu \neq \nu}
        \frac{\ell_\nu \ell_\mu}{(\ell_\nu-\ell_\mu)^2} e_\mu e_\mu^T.
\end{equation*}
 

Centering at $\rhonn = \rho(\ell_\nu,\gamma_n)$ rather than at
$\rho_\nu = \rho(\ell_\nu,\gamma)$ is important:
if, for example, $\gamma_n = \gamma + an^{-1/2}$, then it is easily
seen that there is a limiting mean shift:
\begin{equation*}
  \sqrt n(\lhat_\nu - \rho_\nu) \toodist N(
  a\ell_\nu(\ell_\nu-1)^{-1}, \sigma_\nu^2).
\end{equation*}
On the other hand, it follows from Slutsky's theorem that there is no
harm, and indeed some benefit, see \citet{yajo17}, in replacing $\sigma_\nu$ with 
$\sigma_{\nu n} = \sigma(\ell_\nu,\gamma_n)$, so that
\begin{equation*}
  \sqrt n(\lhat_\nu - \rhonn)/\sigma_{\nu n} \toodist N(0,1).
\end{equation*}

\medskip
\textit{Relevant literature.} \ 
As noted, the results presented here have appeared previously: in
order to give citations, we describe some of the relevant literature
subsequent to \citet{paul07},
while not aspiring to a full review of work on the spiked model.
\citet{bayo08} consider Model M, which replaces the Gaussian
assumption with 4th moment assumptions on the data, allows a
non-diagonal covariance $\Sigma$, and permits spikes
with multiplicity greater than one both above $1 + \sqrt \gamma$ and
below $1 - \sqrt \gamma$ for $\gamma \in (0,1)$. 
They establish (a more general form of)
Theorem \ref{th:eval-normality}, centered at $\rho_\nu$. 
To do so, \citet{bayo08} develop a central
limit theorem for random 
sesquilinear forms $X^T B_n Y$ via the method of moments.
A martingale approach to this CLT, due to J. Baik and J. Silverstein,
is presented in \citet[Theorem 5.2 \& Appendix]{cdf09}, and is adopted
here, Appendix \ref{sec:appendix-b:-bai}.

Also working within Model M, \citet{lzw10} establish Theorem
\ref{th:cosines} (with convergence in probability), among other results.
\citet[Theorem 1.2]{shi13} establishes Theorems
\ref{th:eval-normality} and \ref{th:evec-normality} in Model M, along
with asymptotic joint distributions of the extreme eigenvalues and
vectors. Shi's Theorem 4.1 extends the moment method approach to the
Bai-Yao CLT, with an additional stochastic bound condition on the
entries of $B_n$. 

A generalized spiked model, in which the $\eta$ of Model M is extended
to $V_p^{1/2} \eta$ for deterministic matrices $V_p$ with a limiting
spectral distribution $H$, is discussed in \citet{bayo08} and studied
in detail in \citet{bayo12}, \citet{badi12} and \citet[Ch. 11]{yzb15}.
\citet{ding15} establishes a more general form of
Theorem \ref{th:evec-normality} in the generalized spike model,
building on the Bai-Yao CLT and \citet{badi12}.

Other papers study the asymptotic spectral structure of spiked
covariance models using different perturbation models than Model M.
For example \citet{bgrn11} and \citet{coha13} consider data
$(I_N + P)^{1/2} X$ with $P$ finite rank and $X$ unitarily invariant.
See also \citet{bggm11}, \citet{bkyy16}.


\medskip
\textit{Outline of paper.} \ 
The Schur complement of $S_{22} - t I_p$ is a matrix $K(t) - t
I_m$ of fixed dimension whose determinant vanishes a.s. at the sample
spike eigenvalues $\lhat_\nu$, and whose null space is one-dimensional
dimensional and lies along the (normalized) sample spike eigenvector
$a_\nu$.

We write $K(t)$ as a quadratic form $n^{-1} X_1 B_n(t) X_1^T$, where
$B_n(t)$ depends only on $X_2$ and so is independent of $X_1$.
A key role is played by such
random matrix quadratic forms $X_1 B_n X_1^T$
-- they are
studied in Section \ref{sec:preliminary-results} and Appendix
\ref{sec:appendix-b:-bai}. The latter presents the Bai-Yao CLT for
such quadratic forms, along with the martingale proof of
Baik-Silverstein.
Section \ref{sec:preliminary-results} adapts these results to the
spiked model setting.

Section \ref{sec:first-order-behavior} shows, by direct arguments,
that $\lhat_\nu \toas \rho_\nu$.
An eigenvalue perturbation lemma, Appendix
\ref{sec:appendix-c:-proof},  yields
$a_\nu \toas p_\nu$.
Convergence of $K(\lhat_\nu)$ to $(\rho_\nu/\ell_\nu) \Sigma$ and the
remaining proof of Theorem \ref{th:cosines} uses first order
properties of the quadratic forms $X_1 B_n X_1^T$ from Section
\ref{sec:preliminary-results}.

Section \ref{sec:eigenv-fluct} establishes the second order results.
After preliminary work on tightness at the $n^{-1/2}$ scale, the
eigenvalue CLT is derived from the quadratic form CLT.
The eigenvector CLT follows similarly, now using the quadratic error
bound provided by the eigenvector perturbation lemma of Appendix
\ref{sec:appendix-c:-proof}. 
Throughout, in our setting of simple spike eigenvalues,
an effort is made to present
the asymptotic variance formulas explicitly, showing Gaussian and
non-Gaussian contributions.
At a few points, for example the first part of Section
\ref{sec:eigenv-clt}, our 
exposition benefits from that in \cite{fjs18}, which addresses a more
general variance component setting with multiple levels of
variation.

\medskip
\textit{Notation.} \ 
$\| A \|$ denotes operator norm of $A$. 
For scalar $\ell$ we often write $A - \ell$ in place of $A - \ell I$.
If $X_n$ and $Y_n>0$ are sequences of random variables, we write
$X_n = O_{\rm a.s.}(Y_n)$ if there exists a variable $M < \infty$
a.s. such that $\limsup Y_n^{-1}|X_n| \leq M$ a.s. If $X_n$ is a
vector or matrix, then $|X_n|$ is replaced by $\|X_n\|$. 

\section{Outline of arguments}
\label{sec:outline-arguments}

Except where otherwise mentioned, we consider indices $\nu \leq m_0$,
so that $\ell_\nu > 1 + \sqrt \gamma$. 
We use the Schur complement decomposition
\begin{equation}  \label{eq:S-decomp}
  |S - \ell I_{m+p}| = |S_{22} - \ell I_{p}| \ |K(\ell) - \ell I_{m}|,
\end{equation}
where the $m \times m$ matrix
\begin{equation}
  \label{eq:Kell}
  K(\ell) = S_{11} + S_{12}(\ell I_{p} - S_{22})^{-1}S_{21}.
\end{equation}
Choose $\delta>0$ such that $\rho_\nu - b_\gamma > 3
\delta$. In view of
\eqref{eq:bai-yin},
we may when necessary confine attention to the event
$  E_{n\delta} = \{
                   \mu_1 \leq b_\gamma + \delta \},$
since $\mP(E_{n\delta}) \to 1$. 
When event $E_{n\delta}$ occurs, $\lhat_\nu$ is not an eigenvalue of
$S_{22}$ and so $|K(\lhat_\nu) - \lhat_\nu I_{m}| = 0$.

We begin with first-order behavior and the convergence of $\lhat_\nu$
to $\rho_\nu$. 
First express $K(t)$ as a random quadratic form by 
rewriting \eqref{eq:Kell} as
\begin{equation}
  \label{eq:Krho}
  K(t) = n^{-1} X_1 B_n(t) X_1^T,
\end{equation}
where the Woodbury identity is used in
\begin{equation} \label{eq:Bn-def}
  \begin{split}
  B_n  = B_n(t) 
       & = I_n + n^{-1}X_2^T(tI_p-n^{-1}X_2X_2^T)^{-1}X_2  \\
       & = t (t I_n-n^{-1}X_2^TX_2)^{-1}.  
  \end{split}
\end{equation}
Since $X_1$ is independent of $B_n(t)$, we will see from \eqref{eq:Ktn} that $K(t)$ has a
limit expressed in terms of the limiting companion distribution of the
eigenvalues of $n^{-1}X_2^TX_2$:
\begin{equation}
  \label{eq:Kt-conv}
  \begin{split}
  K(t) & \to
     (\aslim n^{-1} \tr B_n) \Sigma 
     = \int t(t-x)^{-1} \ssf_\gamma(dx) \cdot \Sigma 
     = -t \ssm(t;\gamma) \Sigma =: K_0(t;\gamma),
  \end{split}
\end{equation}
where ${\sf{m}}(t;\gamma)$ is the Stieltjes transform of the companion
distribution $\sf{F}_\gamma$. We then have,
\begin{equation*}
  K(t) - tI_m \tooas - t[\ssm(t ; \gamma) \Sigma + I_m].
\end{equation*}
In Appendix A we show that the equation $\ssm(t ; \gamma) = - 1/\ell_\nu$ has a
single root at $t = \rho_\nu$ in $(b_\gamma,\infty)$, and this leads to
the convergence  $\lhat_\nu \to \rho_\nu$. 
Thus $-\ssm(\rho_\nu ; \gamma) = 1/\ell_\nu$ and returning to
\eqref{eq:Kt-conv},
it can be shown (Section \ref{sec:first-order-behavior}) that
$K(\hat{\ell}_\nu) \to K_0(\rho_\nu ; \gamma) = 
(\rho_\nu/\ell_\nu) \Sigma$.

\bigskip
\textit{Eigenvector inconsistency (Section
  \ref{sec:eigenv-incons} for details).}
The cosine convergence of Theorem \ref{th:cosines} is a first-order
result, so it is natural to start there. 
We begin by reducing the eigenvector equation $S \mathfrak{u}_\nu =
\lhat_\nu \mathfrak{u}_\nu$ to the signal subspace. 
The partitioning (\ref{eq:part}) yields two equations
\begin{align*}
  S_{11} u_\nu + S_{12} v_\nu & = \lhat_\nu u_\nu \\
  S_{21} u_\nu + S_{22} v_\nu & = \lhat_\nu v_\nu,
\end{align*}
along with the normalization condition
\begin{equation*}
  u_\nu^T u_\nu + v_\nu^T v_\nu = 1.
\end{equation*}
From the second equation, $v_\nu = (\lhat_\nu I_p -
S_{22})^{-1}S_{21}u_\nu$ with the inverse defined
a.s. for large enough $n$.
Inserting this in the first equation yields 
$K(\lhat_\nu) u_\nu = \lhat_\nu u_\nu$, while putting
it into the third equation yields
\begin{equation*}
  u_\nu^T(I_m +Q_\nu)u_\nu = 1, \qquad \quad
  Q_\nu = Q(\lhat_\nu)
        = S_{12}(\lhat_\nu I_p - S_{22})^{-2} S_{21}.
\end{equation*}
Writing these two equations in terms of the signal-space normalized
sample eigenvectors $a_\nu = u_\nu/\|u_\nu\|$, we obtain
\begin{equation}
   \label{eq:eqn-pair}
 \begin{split}
  K(\lhat_\nu) a_\nu & = \lhat_\nu a_\nu  \\
  a_\nu^T(I_m +Q_\nu)a_\nu & = \|u_\nu\|^{-2}.     
 \end{split}
\end{equation}

The sample-to-population inner product can be rewritten
\begin{equation}
  \label{eq:cosin}
  \langle \mathfrak{u}_\nu, \mathfrak{p}_\nu \rangle
    =   \langle u_\nu, p_{\nu} \rangle
    = \| u_\nu \| \langle a_\nu, p_{\nu} \rangle.
  \end{equation}
The supercritical case of convergence result (\ref{eq:cos2}) will follow
from two facts
\begin{equation}
    \label{eq:pair-cge}
  a_\nu \tooas p_{\nu}, \qquad \quad 
  Q_\nu \tooas c(\rho_\nu) \Sigma,
\end{equation}
where $c(\rho_\nu)$ is defined at \eqref{eq:crho} and evaluated later at \eqref{eq:crhonu}. 
Indeed then, from \eqref{eq:eqn-pair}
\begin{equation*}
  \| u_\nu \|^{-2}
    \tooas p_\nu^T(I_m+c(\rho_\nu)\Sigma)p_{\nu}
    = 1 + c(\rho_\nu)\ell_\nu.
\end{equation*}
At (\ref{eq:slutsky}), it is computed that
$1 + c(\rho_\nu)\ell_\nu = \rho_\nu/(\ell_\nu \dot \rho_\nu)$.
Consequently, the last three displays yield
the supercritical part of Theorem \ref{th:cosines}:
\begin{equation*}
  \aslim \langle \mathfrak{u}_\nu, \mathfrak{p}_\nu \rangle^2
  = \aslim \|u_\nu \|^2
  = \ell_\nu \dot \rho_\nu/\rho_\nu.
\end{equation*}

For the subcritical case of \eqref{eq:cos2}, we show that $\| u_\nu \|
\to 0$ by showing that $\lmin (Q_\nu) \to \infty$.
This essentially follows from the divergence of the integral
\eqref{eq:crhonu} defining $c(\rho_\nu)$ when $\rho_\nu = b_\gamma$.

\bigskip
\textit{Eigenvalue fluctuations (Section \ref{sec:eigenv-fluct}).} \
Now consider second-order behavior.
The fluctuations of the quadratic form $X_1 B_n(t) X_1^T$ are
fundamental, so introduce
\begin{equation*}
R_n(\rho)
:= n^{-1/2} [X_1 B_n(\rho) X_1^T - \tr B_n(\rho) \Sigma], \qquad
R_{n\nu} := R_n(\rhonn).
\end{equation*}

In Section \ref{sec:tightness-properties}, we will establish the expansion
\begin{equation}
  \label{eq:main2}
  K(\rhonn)
     = (\rhonn/\ell_\nu)\Sigma
          + n^{-1/2} R_{n \nu} + O_p(n^{-1}),
\end{equation}
from \eqref{eq:gen(24)}.
The quantity of primary interest is $K(\lhat_\nu)$, and hence in Section \ref{sec:eigenv-clt}, it
will be shown to have the expansion
\begin{equation}
  \label{eq:delK}
  K(\lhat_\nu) - K(\rhonn) = - (\lhat_\nu - \rhonn)[c(\rho_\nu) \Sigma +
  o_p(1)].
\end{equation}
Combining these two displays and expanding $ p_\nu^T [K(\hat{\ell}_{\nu}) - \hat{\ell}_{\nu} I_m] p_{\nu}$
leads, Section \ref{sec:eigenv-clt}, 
 to a key representation for $\lhat_\nu - \rhonn$
\begin{equation}
  \label{eq:cltready}
  \sqrt n (\lhat_\nu-\rhonn)[1+c(\rho_\nu)\ell_\nu + o_p(1)]
      = p_\nu^T R_{n \nu} p_{\nu} + o_p(1).
\end{equation}
Asymptotic normality of $\lhat_\nu$ and Theorem \ref{th:eval-normality}
then follows from a central limit
theorem for $R_{n\nu} \toodist R^\nu$, Proposition \ref{prop:clt-r}
below. Especially,
the asymptotic variance of the right side is
\begin{equation}
  \label{eq:var-Rnunu}
  \text{\rm Var}(p_\nu^T R^\nu p_{\nu}) = 2 \theta_\nu \ell_\nu^2 +
  \omega_\nu \ell_\nu,
\end{equation}
where $\omega_\nu$ is the fourth cumulant of $p_\nu^T \xi$. 
The constants
$\theta_\nu, \omega_\nu$ and $1+c(\rho_\nu)\ell_\nu$ are all defined below; 
in the fixed $p$ case, they all reduce to $1$, and one gets the classical
CLT for separated sample covariance eigenvalues.

\bigskip
\textit{Eigenvector fluctuations (Section \ref{sec:eigenv-fluct-2}).} \ 
We regard the eigenvector equation $K(\lhat_\nu) a_\nu = \lhat_\nu a_\nu$
as a perturbation of the population equation
$(\rho_\nu/\ell_\nu)\Sigma p_{\nu} = \rho_\nu p_{\nu}$. 
A second order perturbation result for eigenvectors, Lemma
\ref{lemma:evec_perturb_bound}, yields a key representation 
\begin{equation}
     \label{eq:anu-decomp}
  a_\nu - p_{\nu} = - \mathcal{R}_{\nu n} D_{\nu} p_{\nu} + r_\nu,
\end{equation}
where 
$$ D_{\nu} := K(\lhat_\nu) - (\rhonn/\ell_\nu)\Sigma, $$ and 
the `reduced' resolvent is defined by 
\begin{equation}   \label{eq:rresolv}
  \cR_{\nu n} := (\ell_\nu/\rhonn) \sum_{\mu \neq \nu}^m (\ell_{\mu} -
  \ell_\nu)^{-1} p_{\mu} p_\mu^T.
\end{equation}
Lemma \ref{lemma:evec_perturb_bound} and proof of Lemma \ref{lem:tightness} say that $r_\nu
= O(\|D_\nu\|^2) = O_p(n^{-1})$. 
Further analysis allows us to write
\begin{equation}
  \label{eq:Dnud0}
  D_\nu 
   = n^{-1/2} R_{n\nu} + \hat{\delta}_{\nu n} \Sigma +
          o_p(n^{-1/2}), 
\end{equation}
where $ \hat{\delta}_{\nu n} := -[\hat{\ell}_\nu-\rhonn] c(\rho_\nu) 
= O_p(n^{-1/2})$.
A key remark is that $\cR_{\nu n} \Sigma p_{\nu} = 0$ and so 
we arrive at the representation 
\begin{equation}
   \label{eq:anu-fluct}
  \sqrt{n}(a_\nu - p_{\nu})
   = - \cR_{\nu n} R_{n\nu} p_{\nu} + o_p(1).
\end{equation}
Now we again apply the central limit theorem of Proposition
\ref{prop:clt-r} to $R_{n\nu}$. 
After some calculation, set out in Section \ref{sec:eigenv-fluct-2},
we obtain Theorem \ref{th:evec-normality}.



\section{Preliminary results}
\label{sec:preliminary-results}

\textbf{Random quadratic forms.} \ 
First we describe the first and second order behavior of the matrix of
quadratic forms $X_1 B_n X_1^T$, referring to results given in
detail in Appendix B.  

\begin{lemma}
   \label{lem:basi-b26}
For $i = 1, \ldots, n$, consider independent zero mean vectors
\begin{equation*}
  \begin{pmatrix}
    x_i \\ y_i
  \end{pmatrix} 
  \qquad \text{with}  \qquad
  \text{Cov}\begin{pmatrix}
    x_i \\ y_i
  \end{pmatrix} =
  \begin{pmatrix}
    1 & \rho \\
    \rho & 1
  \end{pmatrix}.
\end{equation*}
Suppose that $p \geq 1$ and that for $\ell = 4$ and $2p$ and all $i$
that $E |x_i|^\ell, \E |y_i|^\ell \leq \nu_\ell$.
Let $B$ be an $n \times n$ symmetric matrix,
$\bm{x} = (x_1, \ldots, x_n)^T$, and
$\bm{y} = (y_1, \ldots, y_n)^T$. Then
\begin{equation*}
  \E |\bm{x}^TB \bm{y}- \rho \tr B|^p
     \leq C_p[(\nu_4\tr B^2)^{p/2}+\nu_{2p}\tr B^p],
   \end{equation*}
   where the constant $C_p$ depends on $p$ only.
 \end{lemma}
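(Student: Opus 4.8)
The plan is to center the bilinear form, split it into its diagonal and off--diagonal parts, and bound each: writing $\bm{x}^T B \bm{y} = \sum_i B_{ii} x_i y_i + \sum_{i\neq j} B_{ij} x_i y_j$ and subtracting $\rho\tr B = \sum_i B_{ii}\rho$, we get $\bm{x}^TB\bm{y} - \rho\tr B = T_1 + T_2$ with $T_1 = \sum_i B_{ii}(x_iy_i - \rho)$ and $T_2 = \sum_{i\neq j} B_{ij} x_i y_j$, and it suffices to bound $\E|T_1|^p$ and $\E|T_2|^p$ separately by the right-hand side. (Alternatively one can sidestep $T_2$ by polarization, expressing $\bm{x}^TB\bm{y}-\rho\tr B$ as a fixed combination of the centered quadratic forms built from $\bm{x}\pm\bm{y}$, whose components are i.i.d.\ with controllable moments; the possible degeneracy when $\rho$ is near $-1$ is harmless since the offending normalization cancels the prefactor. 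Either way the substantive content is the purely quadratic estimate, i.e.\ Bai--Silverstein's Lemma B.26, which I would either cite or reprove as below.)

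For the diagonal part, $T_1$ is a sum of independent mean-zero terms, so Rosenthal's inequality gives $\E|T_1|^p \le C_p\bigl[\bigl(\sum_i B_{ii}^2\,\E(x_iy_i-\rho)^2\bigr)^{p/2} + \sum_i |B_{ii}|^p\,\E|x_iy_i-\rho|^p\bigr]$. By Cauchy--Schwarz $\E(x_iy_i-\rho)^2 \le \E(x_iy_i)^2 \le \nu_4$, and similarly $\E|x_iy_i-\rho|^p \le C_p(\E|x_iy_i|^p+1)\le C_p\nu_{2p}$ (using $\nu_{2p}\ge 1$). Finally $\sum_i B_{ii}^2 \le \tr B^2$, while $\sum_i|B_{ii}|^p \le \tr|B|^p$ because the diagonal of a symmetric matrix is majorized by its spectrum (Schur--Horn) and $t\mapsto|t|^p$ is convex. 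This already yields the asserted bound for $T_1$; note that in the application $B = B_n(t)$ is positive definite, so $\tr|B|^p = \tr B^p$ and this step is completely clean.

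The off-diagonal part is where the real work is, and I would treat it by a martingale decomposition along $\mathcal{F}_k = \sigma((x_i,y_i):i\le k)$: the increments are $d_k = x_k\alpha_k + y_k\beta_k$ with $\alpha_k = \sum_{j<k} B_{kj} y_j$, $\beta_k = \sum_{j<k} B_{kj} x_j$ both $\mathcal{F}_{k-1}$-measurable, and Burkholder's inequality bounds $\E|T_2|^p$ by $C_p\bigl[\E\bigl(\sum_k\E_{k-1}d_k^2\bigr)^{p/2} + \sum_k \E|d_k|^p\bigr]$. The term $\sum_k\E|d_k|^p$ is handled by Rosenthal applied to $\alpha_k,\beta_k$ together with the sub-additivity $\sum_k r_k^{p/2}\le(\sum_k r_k)^{p/2}$ for the resulting Frobenius sums. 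The conditional quadratic variation satisfies $\sum_k \E_{k-1}d_k^2 \le 2(\|B_L\bm{y}\|^2 + \|B_L\bm{x}\|^2)$ with $B_L$ the strictly lower-triangular part of $B$, and this is itself a quadratic form $\|B_L\bm{y}\|^2 = \bm{y}^T(B_L^TB_L)\bm{y}$ --- here is the main obstacle. The natural resolution is to prove the whole statement by induction on $p$, the step passing from exponent $p$ to $p/2$ (base case $0<p\le 2$ by Jensen together with the elementary second-moment identities $\E T_1^2 \le \nu_4\tr B^2$, $\E T_2^2 = (1+\rho^2)\sum_{i\neq j}B_{ij}^2$, and vanishing of the cross term): one centers $\bm{y}^T(B_L^TB_L)\bm{y}$ at $\tr(B_L^TB_L)$ and applies the inductive hypothesis at exponent $p/2$, the needed lower moments being available by Lyapunov's inequality and $\nu_p^2\le\nu_{2p}$. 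Closing the induction requires care on two bookkeeping points: routing the combinatorics so that $\nu_4$ (not $\nu_{2p}$) multiplies every $(\tr B^2)^{p/2}$-type quantity, repeatedly using $\nu_4\le\nu_{2p}$; and passing from $B_L^TB_L$ back to $B$ up to a $p$-dependent constant, for which one invokes the boundedness of triangular truncation on the Schatten-$p$ classes (or accepts cruder operator-norm factors where they do not affect the final bound). Assembling the diagonal and off-diagonal estimates gives the inequality with $C_p$ depending on $p$ only.
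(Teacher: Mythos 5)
Your parenthetical alternative is precisely the paper's proof: setting $\bm{z}=(\bm{x}+\bm{y})/\sqrt{2+2\rho}$ one has $\bm{x}^TB\bm{y}=(1+\rho)\bm{z}^TB\bm{z}-\tfrac12\bm{x}^TB\bm{x}-\tfrac12\bm{y}^TB\bm{y}$, the centerings add up to $\rho\tr B$, Bai--Silverstein's Lemma B.26 is applied to each of the three quadratic forms, and $(a+b+c)^p\le 3^{p-1}(a^p+b^p+c^p)$ finishes; as you observe, the apparent degeneracy at $\rho\approx-1$ cancels against the prefactor $(1+\rho)^p$. So ``cite B.26 plus polarization'' is a complete argument and is exactly what the paper does, in a few lines.

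Your main route --- a from-scratch bilinear version of B.26 via the diagonal/off-diagonal split, Rosenthal for $T_1$, and Burkholder--Rosenthal plus induction on $p$ for $T_2$ --- is essentially how B.26 itself is proved and can be made to work, but the steps you defer as ``bookkeeping'' are where the real content sits, and one tool you name would not close the bound as stated. In the term $\sum_k\E|d_k|^p$, after Rosenthal for $\alpha_k=\sum_{j<k}B_{kj}y_j$ you propose the subadditivity $\sum_k r_k^{p/2}\le(\sum_k r_k)^{p/2}$ for the Frobenius sums; this routes those contributions into $(\tr B^2)^{p/2}$ carrying a $\nu_p$ or $\nu_{2p}^{1/2}$ weight, and a term like $\nu_{2p}^{1/2}(\tr B^2)^{p/2}$ is \emph{not} dominated by $(\nu_4\tr B^2)^{p/2}+\nu_{2p}\tr|B|^p$ in general (take $B=I_n$ with $\nu_4$ bounded and $\nu_{2p}$ growing with $n$: the left side is of order $\nu_{2p}^{1/2}n^{p/2}$, the right of order $n^{p/2}+\nu_{2p}n$). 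The repair is the same Schur--Horn device you already invoke for the diagonal, applied to $BB^T$: $\sum_k\bigl((BB^T)_{kk}\bigr)^{p/2}\le\tr(BB^T)^{p/2}=\tr|B|^p$, which correctly places these terms under the $\nu_{2p}\tr|B|^p$ heading. Your other named tool, boundedness of triangular truncation on Schatten-$p$ classes, does suffice for $\tr|B_L|^p\le c_p^p\tr|B|^p$ (for $1<p<\infty$) but is heavier machinery than the problem needs. Finally, as you note, for non-positive-definite $B$ the right-hand side should be read with $\tr|B|^p$ in place of $\tr B^p$, which is the form in B.26 and what the polarization proof delivers.
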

\begin{proof}
The result is a direct consequence of \cite[Lemma B.26]{basi09}, which
applies to quadratic forms $\bm{x}^T B \bm{x}$ with $\bm{x}$ as in our
hypotheses. Indeed, define
$\bm{z} = (\bm{x}+\bm{y})/\sqrt{2+2\rho}$ and write
\begin{equation*}
  \bm{x}^TB \bm{y}
     =(1+\rho) \bm{z}^T B \bm{z} - \hf \bm{x}^T B \bm{x} - \hf
     \bm{y}^T B \bm{y},
\end{equation*}
and then use $(a+b+c)^p \leq 3^{p-1}(a^p+b^p+c^p).$ 
\end{proof}


Let $B_n$ be a sequence of $n \times n$ symmetric matrices.
Apply Lemma \ref{lem:basi-b26} to $B = n^{-1} B_n$ and then use
$\tr B_n^q \leq n \| B_n\|^q$ to conclude
\begin{equation*}
  \E | n^{-1}\bm{x}^TB_n \bm{y}- n^{-1}\rho \tr B_n|^p
    \leq C'_p \|B_n\|^p n^{-p/2}.
\end{equation*}
Hence if the moment conditions on $x_i$ and $y_i$ in Lemma
\ref{lem:basi-b26} hold for $\ell = 2p = 4+\delta$ and $\|B_n\|$ are
bounded, then it follows from Markov's inequality and the
Borel-Cantelli lemma that 
\begin{equation}
  \label{eq:asforconstB}
  n^{-1} \bm{x}^TB_n\bm{y} - n^{-1} \rho \tr B_n \tooas 0.
\end{equation}

We apply this in our Model M with \textit{random} matrices $B_n$,
independent of $X_1$: 
\begin{lemma}
  \label{lem:quad-form-as}
Assume Model M and suppose that $B_n = B_n(X_2)$ is a sequence of
random symmetric matrices for which
$\| B_n\|$ is $O_{\rm a.s.}(1)$.
Then
\begin{equation}
  \label{eq:quad-form-as}
  n^{-1}X_1B_n(X_2)X_1^T - n^{-1}\tr B_n(X_2)\Sigma \tooas 0.  
\end{equation}
\end{lemma}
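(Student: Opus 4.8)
The plan is to condition on $X_2$. Once $X_2$ is fixed, $B_n = B_n(X_2)$ is a deterministic symmetric matrix independent of $X_1$, and the matrix statement \eqref{eq:quad-form-as} reduces, entry by entry, to the deterministic scalar statement \eqref{eq:asforconstB}.

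First I would pass from the $m \times m$ matrix to its $m^2$ scalar entries; since $m$ is fixed, a finite union of null sets is null, so it suffices to fix $1 \le j, k \le m$ and show $n^{-1}(X_1 B_n X_1^T)_{jk} - n^{-1}\sigma_{jk}\,\tr B_n \toas 0$. Writing $\bm{x}^{(j)} = ((X_1)_{j1}, \dots, (X_1)_{jn})^T$ for the vector of the $j$th coordinate of $\xi$ across the $n$ samples, we have $(X_1 B_n X_1^T)_{jk} = \bm{x}^{(j)\,T} B_n \bm{x}^{(k)}$. Setting $x_i = (X_1)_{ji}/\sqrt{\sigma_{jj}}$ and $y_i = (X_1)_{ki}/\sqrt{\sigma_{kk}}$ (the degenerate case $\sigma_{jj}=0$ being trivial), the pairs $(x_i, y_i)_{i \le n}$ are i.i.d.\ with exactly the covariance structure of Lemma \ref{lem:basi-b26}, correlation $\rho = \sigma_{jk}/\sqrt{\sigma_{jj}\sigma_{kk}}$, and $n^{-1}(X_1 B_n X_1^T)_{jk} - n^{-1}\sigma_{jk}\tr B_n = \sqrt{\sigma_{jj}\sigma_{kk}}\,(n^{-1}\bm{x}^T B_n \bm{y} - n^{-1}\rho\,\tr B_n)$, which is the left-hand side of \eqref{eq:asforconstB} up to the fixed scalar $\sqrt{\sigma_{jj}\sigma_{kk}}$.

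Next I would remove the randomness of $B_n$. Fix a realization of $X_2$: by independence the conditional law of $X_1$, hence of each $\bm{x}^{(j)}$, is unchanged, while $B_n$ becomes a deterministic sequence. Since $\|B_n\| = O_{\rm a.s.}(1)$ and a real sequence with $\limsup_n\|B_n\| < \infty$ is bounded, for a.e.\ realization of $X_2$ the sequence $B_n$ is norm-bounded, so \eqref{eq:asforconstB} applies conditionally and gives $n^{-1}\bm{x}^{(j)\,T}B_n\bm{x}^{(k)} - n^{-1}\sigma_{jk}\tr B_n \to 0$ with conditional probability one. Integrating these conditional probability-one statements over $X_2$ (tower property) makes them unconditional, and the finite union over $j,k$ then gives \eqref{eq:quad-form-as}. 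If one prefers to avoid the random norm bound, one can instead run the argument with $B_n$ replaced by $B_n\mathbbm{1}\{\|B_n\| \le M\}$ for each $M \in \mN$ and let $M \to \infty$.

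The only genuinely non-formal point is the moment bookkeeping behind \eqref{eq:asforconstB}: as displayed, it follows from $\E|n^{-1}\bm{x}^TB_n\bm{y} - n^{-1}\rho\tr B_n|^p \le C_p'\|B_n\|^p n^{-p/2}$ via Markov and Borel--Cantelli, which needs $p > 2$ and hence moments of order $2p = 4 + \delta$, whereas Model M only assumes $\E\|x\|^4 < \infty$. To bridge this with exactly four moments one must first truncate: the quadratic form is a.s.\ unchanged for all large $n$ upon replacing $\xi_i$ by its restriction to $\{\|\xi_i\| \le c\sqrt n\}$ (re-centered), since $\sum_n \mP(\|\xi_1\|^2 > c^2 n) \le c^{-2}\E\|\xi_1\|^2 < \infty$, after which the moment/Borel--Cantelli argument can be run on the truncated array, where higher moments are available. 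Arranging the truncation level so that the resulting moment bounds remain summable is, I expect, the main technical obstacle; modulo that step (or modulo assuming $\E\|x\|^{4+\delta} < \infty$, as is implicitly needed for \eqref{eq:asforconstB} itself), the lemma is just conditioning plus a finite union of null events.
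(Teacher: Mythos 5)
Your proposal is correct and follows essentially the same route as the paper: the paper's proof likewise fixes the $X_2$-coordinate $\omega_2$ of the product space (i.e.\ conditions on $X_2$), applies the deterministic-matrix statement \eqref{eq:asforconstB} derived from Lemma \ref{lem:basi-b26} entrywise, and then upgrades to an unconditional almost-sure statement via Fubini. The moment caveat you raise (that \eqref{eq:asforconstB} as derived needs $2p=4+\delta$ moments while Model M states only $\E\|x\|^4<\infty$) applies equally to the paper's own invocation of \eqref{eq:asforconstB}, so it is a fair observation about the source rather than a difference between your argument and theirs.
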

\begin{proof}
At risk of being pedantic, we set out more precisely the assumption
on the random matrix sequence $B_n$. 
Let $(\Omega_i,\mathcal{F}_i,\mu_i)$ be probability spaces
supporting the sequences $\xi_1, \xi_2, \ldots$ for $i=1$ and
$\eta_1, \eta_2, \ldots$ for $i=2$. 
Recalling that $(\xi_i)$ and $(\eta_i)$ are assumed independent, let
$(\Omega,\mathcal{F},\mu) = (\Omega_1 \times \Omega_2, \mathcal{F}_1
\times \mathcal{F}_2, \mu_1 \times \mu_2)$ be the product space.

We assume that for $\mu_2$-almost all $\omega_2 \in \Omega_2$, there exists
$n(\omega_2)$ such that for $n > n(\omega_2)$ the sequence
$B_n(\omega_2)$ is defined and $\|B_n(\omega_2)\|$ is bounded.
[In view of the Bai-Yin convergence (\ref{eq:bai-yin}) and the remark
following (\ref{eq:as-fnal}),
this condition will typically hold in our examples below 
in which $B_n$ is a function of $X_2$
and resolvent matrices $(tI_p - n^{-1}X_2X_2^T)^{-1}$
or $(tI_n - n^{-1}X_2^TX_2)^{-1}$ for $t > b_\gamma$.]

Making the dependence on sample point $\omega = (\omega_1,\omega_2)$
explicit, let 
$$Y_n(\omega) = n^{-1}[X_1(\omega_1)^TB_n(\omega_2)X_1(\omega_1) -
\tr B_n(\omega_2) \Sigma].$$
Let $E = \{ \omega : Y_n(\omega) \to 0\}$ and for each $\omega_2$
define the section $E_{\omega_2} = \{\omega_1: Y_n(\omega_1,\omega_2)
\to 0\}$.
Now apply Lemma \ref{lem:basi-b26} and (\ref{eq:asforconstB}) 
with $B_n = B_n(\omega_2)$ to show
that $\mu_1(E_{\omega_2}) = 1$ for $\mu_2$-almost all $\omega_2$. 
Fubini's theorem now shows that $\mu(E) = 1$, as required. 
\end{proof}

Before illustrating this result, we record a 
a useful consequence of weak convergence
of $\ssf_n$ to $\ssf_\gamma$ combined with
the almost sure convergence of
extreme eigenvalues \eqref{eq:bai-yin}.
If $f_n \to f$ uniformly as continuous functions on the closure $I$ of a
bounded neighborhood of the support of $\ssf_\gamma$,  then
\begin{equation} \label{eq:as-fnal}
  \int f_n(x) \ssf_n(dx) \stackrel{\rm a.s.}{\longrightarrow} 
  \int f(x) \ssf_\gamma(dx).
\end{equation}
[If $\text{supp}({\ssf}_n)$ is not contained in $I$, then the left side
integral may not be defined. However, such an event occurs  for
at most finitely many $n$ with probability one].

Let us illustrate how we apply this result.
Suppose $t_n \to t > b_\gamma + 2 \delta$, and let $r$ be a positive
integer. 
The functions $f_n(x) = t_n^r(t_n-x)^{-r}$ converge uniformly to 
$f(x) = t^r(t-x)^{-r}$ for $x \leq b_\gamma + \delta$.
Let $B_n(t)$ be given  as in \eqref{eq:Bn-def}, and let
$\tilde E_{n\delta} = \{ \mu_1 < b_\gamma + \delta, t_n \geq b_\gamma + 2 \delta\}$. 
Now set $B_n = \mathbbm{1}( \tilde E_{n\delta} ) B_n(t)$ and 
note that $\mathbbm{1}( \tilde E_{n\delta} ) \toas 1$. 
From  (\ref{eq:as-fnal}), the normalized traces
\begin{equation}  
     \label{eq:lln}
  n^{-1} \tr B_n^r(t_n) 
    = I_n \cdot n^{-1} \sum_{i=1}^n t_n^r (t_n-\mu_i)^{-r}
    \tooas \int t^r (t-x)^{-r} {\sf{F}}_\gamma(dx)
\end{equation}
and are $O_{\rm a.s.}(1)$. 
Similarly $\| B_n \| = t_n(t_n-\mu_1)^{-1}$
is $O_{\rm a.s.}(1)$.
From Lemma \ref{lem:quad-form-as}
we may then conclude that
\begin{equation}
  \label{eq:Ktn}
K(t_n) \tooas K_0(t;\gamma). 
\end{equation}


The fluctuations of the matrix random quadratic forms $X_1 B_n X_1^T$
are asymptotically 
Gaussian. This is formalized in the next result, itself a 
direct consequence of a foundational CLT for vectors of
scalar bilinear forms recalled as Theorem \ref{th:bybs} in Appendix
B. 

\begin{proposition}
  \label{prop:matCLT}
Suppose that the columns of an $m \times n$ matrix $X_1$ are
distributed i.i.d. as $\xi$ with mean $0$, covariance matrix
$\Sigma = (\sigma_{jk})$ and fourth order cumulant tensor
$\kappa_{jklm} = \E[\xi_j \xi_k \xi_l \xi_m]
-\sigma_{jk}\sigma_{lm}-\sigma_{jl}\sigma_{km}-\sigma_{jm}\sigma_{kl}$.
Let $B_n$ be a sequence of $n \times n$ real symmetric random
matrices, independent of $X_1$.
Assume that $\|B_n\| \leq a$ for all $n$,
and that
\begin{equation*}
  \theta = \plim n^{-1} \tr B_n^2,\qquad \quad
  \omega = \plim n^{-1} \sum_{i=1}^n b_{n,ii}^2
\end{equation*}
are both finite.
Then the centered and scaled matrix
\begin{equation*}
  R_n = n^{-1/2}[X_1 B_n X_1^T - (\tr B_n) \Sigma] 
      \ \stackrel{\mathcal{D}}{\to} \ R,
\end{equation*}
a symmetric $m \times m$ Gaussian matrix having entries $R_{ij}$ with
mean zero and 
covariances given by
\begin{equation}
  \label{eq:covRijipjp}
  \text{cov}(R_{jk},R_{j'k'}) 
    = \theta [\sigma_{jk'} \sigma_{j'k} + \sigma_{jj'}
     \sigma_{kk'}] + 
        \omega \, \kappa_{jkj'k'}.
\end{equation}
\end{proposition}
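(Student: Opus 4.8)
The plan is to recognize the entries of $R_n$ as a vector of scalar bilinear forms and then invoke the Bai--Yao/Baik--Silverstein CLT, Theorem \ref{th:bybs}. First I would fix notation: let $\bm{x}^{(j)} = (\xi_{1,j},\ldots,\xi_{n,j})^T \in \mR^n$ denote the $j$th row of $X_1$, so that $(X_1 B_n X_1^T)_{jk} = (\bm{x}^{(j)})^T B_n \bm{x}^{(k)}$ and
\[
  (R_n)_{jk} = n^{-1/2}\bigl[(\bm{x}^{(j)})^T B_n \bm{x}^{(k)} - \sigma_{jk}\tr B_n\bigr].
\]
Since the columns $\xi_i$ of $X_1$ are i.i.d., the sequences $\bm{x}^{(1)},\ldots,\bm{x}^{(m)}$ are assembled from i.i.d.\ ``samples'' $(\xi_{i,1},\ldots,\xi_{i,m})$, $i\le n$, with common covariance $\Sigma$ and fourth cumulant tensor $\kappa$ --- exactly the input required by Theorem \ref{th:bybs}. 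Because $R_n$ is symmetric it suffices to treat the $m(m+1)/2$ entries with $j\le k$ and establish their joint asymptotic normality.

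The second step is to reduce to a CLT for a deterministic matrix. As $B_n = B_n(X_2)$ is independent of $X_1$, I would condition on $X_2$; given $X_2$ the matrix $B_n$ is deterministic with $\|B_n\|\le a$, and one sets $\theta_n := n^{-1}\tr B_n^2$ and $\omega_n := n^{-1}\sum_i b_{n,ii}^2$. The moment hypotheses of Theorem \ref{th:bybs} hold because $\E\|\xi\|^4<\infty$ under Model M. Applying Theorem \ref{th:bybs} conditionally --- reading its conclusion as convergence of the conditional characteristic function of $\mathrm{vec}\{(R_n)_{jk}\}_{j\le k}$, uniformly over symmetric $B_n$ with $\|B_n\|\le a$, to the Gaussian one whose covariance is \eqref{eq:covRijipjp} evaluated at $(\theta_n,\omega_n)$ --- and then invoking $\theta_n\toprob\theta$, $\omega_n\toprob\omega$ together with dominated convergence, one obtains convergence of the unconditional characteristic function, hence $R_n\toodist R$ with $R$ symmetric Gaussian.

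Third, I would pin down the covariance (or simply quote its specialization from Theorem \ref{th:bybs}). Since the rows of $X_1$ are centered, $\E(R_n)_{jk}=0$. Expanding $\E[(R_n)_{jk}(R_n)_{j'k'}]$ and classifying the sample indices in $\E(\xi_{s,j}\xi_{t,k}\xi_{s',j'}\xi_{t',k'})$ --- all four equal (contributing $\kappa_{jkj'k'}+\sigma_{jk}\sigma_{j'k'}+\sigma_{jj'}\sigma_{kk'}+\sigma_{jk'}\sigma_{kj'}$, weighted by $\sum_i b_{n,ii}^2$) versus splitting into two pairs (each weighted by a sum of squared entries of $B_n$) --- the $\sigma_{jk}\sigma_{j'k'}(\tr B_n)^2$ piece cancels against the subtracted mean terms, leaving
\[
  \E[(R_n)_{jk}(R_n)_{j'k'}] = \theta_n\bigl[\sigma_{jj'}\sigma_{kk'}+\sigma_{jk'}\sigma_{kj'}\bigr] + \omega_n\,\kappa_{jkj'k'},
\]
which converges to \eqref{eq:covRijipjp} as $n\to\infty$ (using $\sigma_{kj'}=\sigma_{j'k}$).

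The main obstacle I anticipate is the randomness of $B_n$: Theorem \ref{th:bybs} is a statement about a deterministic matrix sequence, so one must condition on $X_2$, run the CLT with the random normalized traces $\theta_n,\omega_n$ in place of their limits, and then transfer back to the unconditional law --- legitimate precisely because the conclusion of Theorem \ref{th:bybs} is uniform over $\|B_n\|\le a$ and $\theta_n,\omega_n$ converge in probability to $\theta,\omega$. Everything else --- fitting the index pattern of the $m(m+1)/2$ bilinear forms (after the obvious rescaling of rows by $\sigma_{jj}^{-1/2}$, if needed) into the hypotheses of Theorem \ref{th:bybs}, and carrying the fourth-cumulant term through the moment expansion --- is routine bookkeeping.
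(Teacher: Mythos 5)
Your reduction is exactly the paper's: index the $m(m+1)/2$ entries with $j\le k$ by pairs, set $x_\ell=\xi_j$, $y_\ell=\xi_k$, identify the blocks of $\Gamma$ and the partial cumulant matrix $K_{\ell\ell'}=\kappa_{jkj'k'}$, and read \eqref{eq:covRijipjp} off from $D=\theta J+\omega K$ in \eqref{eq:eqD}; your direct moment expansion of the covariance is a correct alternative to that bookkeeping. The one place you diverge is the handling of the randomness of $B_n$: the conditioning-on-$X_2$ detour is unnecessary, because Theorem \ref{th:bybs} is already stated for \emph{random} symmetric $B_n$ independent of the data, with $\|B_n\|\le a$ and $\theta,\omega$ defined as probability limits (its martingale proof puts $B_n$ into the filtration $\mathcal{F}_{n,i}$), so the paper applies it directly. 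Moreover, as you sketch it the detour is the weakest link: Theorem \ref{th:bybs} asserts no uniformity in $B_n$ and no finite-$n$ conditional Gaussian approximation with covariance at $(\theta_n,\omega_n)$, and for a fixed realization of the $X_2$-sequence the normalized traces need not converge (convergence in probability does not give almost sure convergence of $\theta_n,\omega_n$), so the "apply conditionally, then dominate" step would itself require an argument essentially equivalent to reproving the theorem for random $B_n$. Dropping that step and citing Theorem \ref{th:bybs} as stated makes your proof complete and identical in substance to the paper's.
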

If $B_n = I_n$, then $\omega = \theta = 1$ and we recover the classical CLT for
sample covariance matrices cited, for example, in \citet[Theorem
1.2.17 and p. 42]{muir82}.
The result here is a version of \cite[Prop 3.1]{bayo08} in which
$B_n$ is not yet 
specialized, and the spectral norm condition is added. Otherwise the
proof is as in [BY], but is included for completeness. 

\begin{proof}
  We turn the matrix quadratic form $X_1 B_{n} X_1^T$ into a vector of
  bilinear forms (for use in Theorem \ref{th:bybs}) by a common device also
  used in [BY].
Use an index $\ell$ for the $L = m(m+1)/2$ pairs $(j,k)$ with $1 \leq j \leq
k \leq m$. \footnote{The index $\ell$ is not to be confused with eigenvalues
$\ell_\nu!$}
Build the random vectors $(x,y)$ for Theorem \ref{th:bybs} from $\xi$ as
follows:  if $\ell = (j,k)$ then set
$x_\ell = \xi_j$ and $y_\ell = \xi_k$.
In the resulting covariance matrix $\Gamma$ for $(x,y)$, if also
$\ell'=(j',k')$, we have
\begin{equation}
  \label{eq:Gam1}
  \Gamma^{xy}_{\ell \ell'} 
   = \E x_\ell y_{\ell'} 
   = \E \xi_j \xi_{k'} 
   = \sigma_{jk'},
\end{equation}
so that, in particular, $\rho_\ell = \Gamma^{xy}_{\ell \ell} 
= \sigma_{jj}$. Similarly,
\begin{equation}
  \label{eq:Gam2}
  \begin{split}
  \Gamma^{xx}_{\ell \ell'} = \sigma_{jj'}, \qquad
  \Gamma^{yx}_{\ell \ell'} & = \sigma_{kj'}, \qquad
  \Gamma^{yy}_{\ell \ell'} = \sigma_{kk'}, \quad \textrm{and} \\
  K_{\ell \ell'} & = \kappa_{jkj' k'}.    
  \end{split}
\end{equation}
Component $R_{\nu n,jk}$ corresponds to component $Z_\ell$ in Theorem
\ref{th:bybs}, and we conclude that $R_{\nu n} \stackrel{\mathcal{D}}{\to} R$.
The latter is Gaussian mean zero with covariance matrix $D$ given by
\eqref{eq:eqD}, so that 
$\text{cov}(R_{jk},R_{j'k'}) = \theta J_{\ell \ell'} + \omega K_{\ell
  \ell'}$,
where $J_{\ell \ell'}$ is defined at \eqref{eq:Jdef}.
Substituting \eqref{eq:Gam1} and \eqref{eq:Gam2}
yields \eqref{eq:covRijipjp}. 
\end{proof}

\bigskip
We will apply Proposition \ref{prop:matCLT} in the setting of Model M
to obtain a central limit theorem for $K(\rhonn)$ as defined at
(\ref{eq:Krho}). 

\begin{proposition}
  \label{prop:clt-r}
Assume Model M and define $\rhonn$ by (\ref{eq:rhodef}) and 
$B_n(\rhonn)$ by (\ref{eq:Bn-def}). Then
\begin{equation} \label{eq:Rnnu-cge}
  R_{n\nu} = n^{-1/2}[X_1B_n(\rhonn)X_1^T - \tr B_n(\rhonn) \Sigma]
           \stackrel{\mathcal{D}}{\longrightarrow} R^\nu,
\end{equation}
a symmetric Gaussian random matrix having entries $R_{jk}^\nu$ with
mean zero and covariance given by (\ref{eq:covRijipjp}), along with
$\theta$ and $\omega$ given by
\begin{equation}
  \label{eq:th-om-nu}
  \theta_\nu = \frac{1}{\dot \rho_\nu}
  \left(\frac{\rho_\nu}{\ell_\nu}\right)^2 
  = \frac{(\ell_\nu-1+\gamma)^2}{(\ell_\nu-1)^2-\gamma}, 
    \qquad \quad 
    \omega_\nu = \left(\frac{\rho_\nu}{\ell_\nu}\right)^2 
    = \frac{(\ell_\nu-1+\gamma)^2}{(\ell_\nu-1)^2}.
\end{equation}
In addition,
\begin{align}
  \label{eq:Rvar}
  \text{\rm Var}(p_\nu^T R^\nu p_{\nu}) & = 2 \theta_\nu \ell_\nu^2 +
  \omega_\nu [\mathcal{P}^\nu, \kappa], \\
  \label{eq:Rcolcov}
  \text{\rm Cov}(R^\nu p_{\nu}) & = \theta_\nu \ell_\nu \Sigma
      + \theta_\nu \ell_\nu^2  p_{\nu} p_\nu^T +   \omega_\nu K^\nu,
\end{align}
where $[\mathcal{P}^\nu, \kappa]$ and  $K^\nu$  are defined at \eqref{eq:kapmumu}--\eqref{eq:innerproduct}.
\end{proposition}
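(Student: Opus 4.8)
The plan is to read off the central limit theorem \eqref{eq:Rnnu-cge} from the general matrix quadratic form CLT, Proposition \ref{prop:matCLT}, applied with $X_1$ as in Model M and $B_n = B_n(\rhonn)$ as in \eqref{eq:Bn-def}; the two covariance identities \eqref{eq:Rvar} and \eqref{eq:Rcolcov} then follow from \eqref{eq:covRijipjp} by elementary linear algebra. The first point to settle is the spectral norm hypothesis of Proposition \ref{prop:matCLT}: $B_n(\rhonn) = \rhonn(\rhonn I_n - n^{-1}X_2^TX_2)^{-1}$ has operator norm $\rhonn/(\rhonn-\mu_1)$, which is only $O_{\rm a.s.}(1)$. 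As in the discussion preceding \eqref{eq:lln}, I would fix $\delta>0$ with $\rho_\nu - b_\gamma > 3\delta$, put $\tilde E_{n\delta} = \{\mu_1 < b_\gamma + \delta,\ \rhonn \ge b_\gamma + 2\delta\}$ and replace $B_n(\rhonn)$ by $\tilde B_n = \mathbbm{1}(\tilde E_{n\delta})\,B_n(\rhonn)$; then $\|\tilde B_n\| \le (b_\gamma + 2\delta)/\delta$ for every $n$, while $\mathbbm{1}(\tilde E_{n\delta}) \toas 1$ by \eqref{eq:bai-yin} and $\rhonn \to \rho_\nu$. Since this event has probability tending to one, proving the CLT for the corresponding $\tilde R_{n\nu}$ gives it for $R_{n\nu}$; moreover $\tilde B_n$ depends only on $X_2$ and so is independent of $X_1$ under Model M.

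Next I would identify the two limiting constants required by Proposition \ref{prop:matCLT}. The eigenvalues of $B_n(t)$ are $t/(t-\lambda)$ as $\lambda$ ranges over the eigenvalues of the companion matrix $C_n = n^{-1}X_2^TX_2$, so $n^{-1}\tr B_n(\rhonn)^2 = \int \rhonn^2(\rhonn - x)^{-2}\,\ssf_n(dx)$, which by \eqref{eq:as-fnal}--\eqref{eq:lln} (with $t_n = \rhonn \to \rho_\nu$) converges a.s.\ to $\theta_\nu = \rho_\nu^2\int(\rho_\nu - x)^{-2}\ssf_\gamma(dx) = \rho_\nu^2\,\partial_t\ssm(\rho_\nu;\gamma)$. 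Differentiating the identity $\ssm(\rho(\ell,\gamma);\gamma) = -1/\ell$ (established in Appendix A) with respect to $\ell$ gives $\partial_t\ssm(\rho_\nu;\gamma)\,\dot\rho_\nu = \ell_\nu^{-2}$, so $\theta_\nu = \dot\rho_\nu^{-1}(\rho_\nu/\ell_\nu)^2$; substituting $\rho_\nu = \ell_\nu(\ell_\nu - 1 + \gamma)/(\ell_\nu - 1)$ and $\dot\rho_\nu = ((\ell_\nu-1)^2 - \gamma)/(\ell_\nu-1)^2$ produces the closed form in \eqref{eq:th-om-nu}.

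For the second constant $\omega = \plim n^{-1}\sum_i b_{n,ii}^2$, I would first note that $n^{-1}\sum_i b_{n,ii} = n^{-1}\tr B_n(\rhonn) = \int\rhonn(\rhonn-x)^{-1}\ssf_n(dx) \toas -\rho_\nu\ssm(\rho_\nu;\gamma) = \rho_\nu/\ell_\nu$, and then show that the diagonal entries are asymptotically homogeneous, $\max_i|b_{n,ii} - \rho_\nu/\ell_\nu| \tooprob 0$, whence $n^{-1}\sum_i b_{n,ii}^2 \tooprob (\rho_\nu/\ell_\nu)^2 = \omega_\nu$. The homogeneity step is the technical heart of the argument: using the Woodbury form \eqref{eq:Bn-def}, $b_{n,ii} = 1 + n^{-1}\eta_i^T M_n\eta_i$ with $M_n = (\rhonn I_p - n^{-1}X_2X_2^T)^{-1}$ and $\eta_i$ the $i$-th column of $X_2$; a Sherman--Morrison / leave-one-out reduction gives $b_{n,ii} = (1 - \beta_i)^{-1}$ with $\beta_i = n^{-1}\eta_i^T M_n^{(i)}\eta_i$, where $M_n^{(i)}$ is the resolvent built from $X_2$ with column $i$ deleted, and then Lemma \ref{lem:basi-b26} applied conditionally on $X_2^{(i)}$, together with $|n^{-1}\tr M_n - n^{-1}\tr M_n^{(i)}| = O(n^{-1})$, shows $\beta_i$ converges uniformly in $i$ to the common limit $\bar\beta = 1 - \ell_\nu/\rho_\nu$.

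With $\theta_\nu, \omega_\nu$ in hand, Proposition \ref{prop:matCLT} yields \eqref{eq:Rnnu-cge} with the covariances \eqref{eq:covRijipjp}; the remaining identities are algebra. For \eqref{eq:Rvar}, I would expand $p_\nu^T R^\nu p_\nu = \sum_{j,k}p_{\nu,j}p_{\nu,k}R^\nu_{jk}$, insert \eqref{eq:covRijipjp}, and use $\sum_{j,k'}p_{\nu,j}p_{\nu,k'}\sigma_{jk'} = p_\nu^T\Sigma p_\nu = \ell_\nu$ to see that each of the two $\theta_\nu$-terms contributes $\ell_\nu^2$, while the $\omega_\nu$-term is $[\mathcal{P}^\nu,\kappa]$ by definitions \eqref{eq:kapmumu}--\eqref{eq:innerproduct}. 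For \eqref{eq:Rcolcov}, the $(j,j')$ entry of $\text{Cov}(R^\nu p_\nu)$ is $\sum_{k,k'}p_{\nu,k}p_{\nu,k'}\text{cov}(R^\nu_{jk},R^\nu_{j'k'})$; using the eigenvector relation $\Sigma p_\nu = \ell_\nu p_\nu$ the first $\theta_\nu$-term gives $\theta_\nu\ell_\nu^2 p_{\nu,j}p_{\nu,j'}$, the second gives $\theta_\nu\ell_\nu\sigma_{jj'}$, and, by full symmetry of the cumulant tensor, the $\omega_\nu$-term gives $\omega_\nu K^\nu_{jj'}$. The main obstacle is the diagonal-homogeneity claim of the third paragraph, the only place where leave-one-out / concentration arguments of Bai--Silverstein type are needed; everything else reduces to Proposition \ref{prop:matCLT}, the weak convergence of $\ssf_n$ to $\ssf_\gamma$ with the extreme-eigenvalue control \eqref{eq:bai-yin}, and the Stieltjes-transform identities of Appendix A.
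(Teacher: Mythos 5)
Most of your proposal tracks the paper's argument: truncating $B_n(\rhonn)$ by the indicator of a high-probability event to meet the norm hypothesis of Proposition \ref{prop:matCLT}, evaluating $\theta_\nu$ through $n^{-1}\tr B_n^2(\rhonn) \toas \rho_\nu^2\int(\rho_\nu-x)^{-2}\ssf_\gamma(dx)$ and the identity $\ssm'(\rho_\nu)\dot\rho_\nu = \ell_\nu^{-2}$, and the final algebra for \eqref{eq:Rvar}--\eqref{eq:Rcolcov} (the paper derives \eqref{eq:Rvar} from \eqref{eq:Rcolcov} via $\mathrm{Var}(p_\nu^TR^\nu p_\nu)=p_\nu^T\mathrm{Cov}(R^\nu p_\nu)p_\nu$, but your direct expansion is equivalent).

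The gap is in your evaluation of $\omega_\nu$. You reduce it to the uniform statement $\max_{i\le n}|b_{n,ii}-\rho_\nu/\ell_\nu| \tooprob 0$ and claim this follows from Lemma \ref{lem:basi-b26} applied conditionally to each leave-one-out quadratic form $\beta_i$, but this needs a union bound over $n$ indices, and under Model M only fourth moments of the entries of $\eta$ are available. Lemma \ref{lem:basi-b26} then applies only with $p=2$, giving a per-index deviation probability of order $n^{-1}$ (via Chebyshev on a variance of order $n^{-1}$), so the union bound over $i=1,\dots,n$ yields $O(1)$, not $o(1)$; to make your route work you would need $2p>4$ moments, which are not assumed, or a delicate truncation argument well beyond the cited lemma. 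The uniform claim is also stronger than what is needed, since only the Ces\`aro average $n^{-1}\sum_i b_{n,ii}^2$ enters $\omega_\nu$. The paper's Lemma \ref{lem:omegaval} avoids the maximum entirely: the event $E_{n\delta}=\{\mu_1\le b_\gamma+\delta\}$ is invariant under permutations of the columns of $X_2$, so conditionally on it
\begin{equation*}
  \E_n\Big|n^{-1}\sum_{i=1}^n b_{n,ii}^2-b^2\Big| \le \E_n|b_{n,11}^2-b^2|\to 0,
\end{equation*}
where the single-index convergence $b_{n,11}\toas \rho_\nu/\ell_\nu$ is obtained from the leave-one-out identity $b_{n,ii}=(1-c_{n,ii})^{-1}$ together with Lemma \ref{lem:quad-form-as} (applied with $X_1=x_i$, $B_n=(\rhonn-S_{ni})^{-1}$), and the boundedness $|b_{n,11}|\le C(\rho_\nu,\delta)$ on $E_{n\delta}$ justifies the passage to expectations. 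You should replace your homogeneity step by this exchangeability argument (or supply a genuinely harder uniform-diagonal estimate valid under fourth moments); with that substitution the rest of your proof goes through as written.
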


\textit{Special cases.} \ 
If the components of $\xi$ are independent, then
$\Sigma  = \textrm{diag}(\ell_\nu)$ with $p_{\nu}$ the coordinate basis
vectors, 
$K^\nu = [\mathcal{P}^\nu, \kappa] p_{\nu} p_\nu^T$ and 
$[\mathcal{P}^\nu, \kappa] = \E \xi_\nu^4 - 3 \ell_\nu^2$ is the fourth cumulant of
$\xi_\nu$.
If $\xi$ is Gaussian, then each of $\kappa_{jklm}, K^\nu$ and
$[\mathcal{P}^\nu, \kappa]$ vanish.

\begin{proof}
  The matrix $B_n(\rhonn) = \rhonn(\rhonn I_n - n^{-1} X_2^T
  X_2)^{-1}$ is independent of $X_1$.
  Define $\check E_{n\delta} = \{ \mu_1(n^{-1}X_2^T X_2) \leq b_\gamma + \delta,
  \rhonn > \rho_\nu-\delta \}$ and apply Proposition \ref{prop:matCLT}
  to $B_n = B_n(\rhonn) \mathbbm{1}( \check E_{n\delta} )$, which is also independent of $X_1$.
  We have
\begin{equation*}
  \| B_n \| = 
  \| B_n(\rhonn) \mathbbm{1}( \check E_{n\delta} ) \|
  \leq \rhonn/(\rhonn-\mu_1) \leq \delta^{-1}(\rho_\nu+\delta)
  = a, 
\end{equation*}
say. The CLT of Proposition \ref{prop:matCLT} applies to
$R_n =  \mathbbm{1}( \check E_{n\delta} ) R_{n \nu}$. Since $ \mathbbm{1}( \check E_{n\delta} ) \toas 1$, we obtain \eqref{eq:Rnnu-cge}.
It remains to evaluate
$\theta_\nu$ and $\omega_\nu$: these are derived in Appendix A at
(\ref{eq:thetanu}) and Lemma \ref{lem:omegaval}; the presence of $\mathbbm{1}_{\check E_{n\delta}} \toas 1$ is immaterial.

To verify \eqref{eq:Rcolcov}, we calculate using \eqref{eq:covRijipjp}
\begin{align*}
  \textrm{cov}[(R^\nu p_{\nu})_{j},(R^\nu p_{\nu})_{j'}]
    & = \sum_{k,k'} p_{\nu,k} \, \textrm{cov}(R^\nu_{jk},R^\nu_{j'k'})
      p_{\nu,k'} \\
    & = \sum_{k,k'} \theta_\nu p_{\nu,k} [\sigma_{jk'} \sigma_{j'k} + \sigma_{jj'}
     \sigma_{kk'}] p_{\nu,k'} + \omega_\nu p_{\nu,k} \kappa_{jj'kk'}
      p_{\nu,k'} \\
    & = \theta_\nu [(\Sigma p_{\nu})_j (\Sigma p_{\nu})_{j'} +
      \sigma_{jj'} p_{\nu}^T \Sigma p_{\nu}] + \omega_\nu K^\nu_{jj'},
\end{align*}
which reduces to \eqref{eq:Rcolcov}.
Since $\textrm{Var}(p_{\nu}^T R^\nu p_{\nu}) = p_{\nu}^T \textrm{Cov}(R^\nu
p_{\nu}) p_{\nu}$, variance formula \eqref{eq:Rvar} now follows from
\eqref{eq:Rcolcov}. 
\end{proof}

\section{First Order Behavior}
\label{sec:first-order-behavior}


\subsection{Eigenvalues}
\label{sec:eigenvalues}

\textit{Proof that $\lhat_\nu \toas \rho_\nu$
for $\nu \leq m_0$.} \ \ 
Although established in \citet{basi06},  in our
setting of simple supercritical $\ell_\nu$ there is a more direct proof
that also introduces tools needed below.

For $\rho > b_{\gamma}$, $\lambda_{0 \nu}(\rho)= - \rho \ssm(\rho;\gamma)\ell_\nu - \rho$ is the $\nu^{\text{th}}$ eigenvalue of $K_0(\rho;\gamma) -\rho I_m$. 
Observe that
\begin{equation*}
  \partial_\rho \lambda_{0 \nu}(\rho) = - 1 - \ell_{\nu} \int x(\rho-x)^{-2}
  {\sf{F}}_{\gamma}(dx) < -1.
\end{equation*}
Since $\lambda_{0 \nu}(\rho_\nu) = 0$, 
if we define $\rho_{\nu \pm} = \rho_\nu \pm \epsilon$, this entails that
\begin{equation*}
  \lambda_{0 \nu}(\rho_{\nu -}) \geq \epsilon, \qquad
  \lambda_{0 \nu}(\rho_{\nu +}) \leq -\epsilon.
\end{equation*}
Now let $\lambda_\nu(\rho)$ be the $\nu^{\text{th}}$ eigenvalue of $K(\rho) - \rho I_m$. 
A standard eigenvalue perturbation bound yields
\begin{equation}
  \label{eq:evalpbd}
  |\lambda_\nu(\rho) - \lambda_{0 \nu}(\rho)|
  \leq \| K(\rho)-K_0(\rho;\gamma) \| \tooas 0.
\end{equation}
So for $n \geq n_0(\omega,\epsilon)$, where $\omega$ is a sample from $\Omega$ defined in Lemma \ref{lem:quad-form-as}, and $\epsilon > 0$, we must have, for each $\nu \leq
m_0$, 
\begin{equation*}
  \lambda_{\nu}(\rho_{\nu -}) \geq \hf \epsilon, \qquad
  \lambda_{\nu}(\rho_{\nu +}) \leq - \hf \epsilon.
\end{equation*}
Since $\lambda_\nu(\rho)$ is continuous in $\rho$, it must be that
$\lambda_\nu(\rho_{\nu *}) = 0$ for some $\rho_{\nu *} \in (\rho_{\nu -},\rho_{\nu
  +})$.
From \eqref{eq:S-decomp}, $\rho_{\nu *}$ is an eigenvalue of $S$, and
since $\lhat_{m_0+1} \to b_\gamma$ and all supercritical eigenvalues are simple,
it must be that $\rho_{\nu *} =\lhat_\nu$, by taking $\epsilon < \min \{ \rho_i - \rho_{i+1} \}_{i = 1, \cdots, m_0}$.

Taking account also of \eqref{eq:bai-yin}, 
we have  shown that with probability one, there
exists $n_0(\omega)$ such that for all $n > n_0(\omega)$, 
\begin{equation} \label{eq:intervals}
  \mu_1(\omega) < b_\gamma + \epsilon_0, \qquad
  |\hat{\ell}_\nu(\omega) - \rho_\nu| < \epsilon_0, \quad
  \nu = 1, \ldots, m_0.
\end{equation}
In particular, $|K(\hat{\ell}_\nu) - \hat{\ell}_\nu I_m | = 0$ and
$\lhat_\nu \toas \rho_\nu$.



\medskip
\textit{Proof that $K(\lhat_\nu)  \ \tooas \ (\rho_\nu/\ell_\nu)
  \Sigma$.} \ \ 
Now we argue that 
\begin{equation}
   \label{eq:Kfirstorder}
  \Delta_{\nu n} = K(\lhat_\nu) - \hat{\ell}_\nu I_m 
     \ \tooas \ (\rho_\nu/\ell_\nu) \Sigma - \rho_\nu I_m.
\end{equation}
Begin with the decomposition
\begin{equation}
   \label{eq:Delnun}
  \Delta_{\nu n} 
    = K(\rhonn)-\lhat_\nu I +K(\lhat_\nu) -K(\rhonn).
\end{equation}
We have seen at (\ref{eq:Ktn}) 
that $K(\rhonn) \toas (\rho_\nu/\ell_\nu) \Sigma$, 
and then that 
$\lhat_\nu \toas \rho_\nu$.

Now we show that $K(\hat{\ell}_\nu)-K(\rhonn) \toas 0$.
Recall that $C_n = n^{-1} X_2^T X_2$ and 
introduce the resolvent\footnote{Most--but not all--authors use 
  $(C_n - \ell I)^{-1}$, 
but for us this form yields positive
  definite matrices.} notation
$R(\ell) = (\ell I_n - C_n)^{-1}$
so that from \eqref{eq:Krho} and \eqref{eq:Bn-def},
$K(\rho) = n^{-1} X_1 \rho R(\rho) X_1^T$ and 
$B_n(\rhonn) = \rhonn R(\rhonn)$.
From the resolvent identities
\begin{align}
  \label{eq:res-id}
  R(\ell) - R(\rho) & = -(\ell-\rho) R(\ell)R(\rho), \\
   \ell R(\ell) - \rho R(\rho)
    & = -(\ell - \rho) C_n R(\ell) R(\rho),
\end{align}
after noting that $\ell R(\ell) = C_n R(\ell) + I$, we obtain
\begin{align}
  \label{eq:Kdiff}
  K(\lhat_\nu) - K(\rhonn)
  & = -(\lhat_\nu-\rhonn) n^{-1} X_1 C_n R(\lhat)R(\rhonn) X_1^T \\
  & = -(\lhat_\nu-\rhonn) n^{-1} X_1 C_n R^2(\rhonn) X_1^T \notag \\
  & \quad 
  + (\lhat_\nu-\rhonn)^2 n^{-1} X_1 C_n R(\lhat)R^2(\rhonn)
    X_1^T.   \label{eq:Kdiff2}
\end{align}

For $n > n_0(\omega)$, we have $\lhat_\nu > \rho_\nu -\epsilon_0$ from
\eqref{eq:intervals}, and so 
$R(\lhat_\nu) \prec R(\rho_\nu-\epsilon_0)$ in the ordering of
nonnegative definite matrices and 
consequently, for $k \in \mathbb{N},$
\begin{equation} \label{eq:B_nk_quad_ineq}
  n^{-1} X_1 B_{nk}(\lhat_\nu,\rhonn) X_1^T
  \prec
  n^{-1} X_1 B_{nk}(\rho_\nu-\epsilon_0,\rhonn) X_1^T,
\end{equation}
where for later reference we define
$B_{nk}(\ell,\rho) = C_n R(\ell)R^k(\rho)$.
As
\begin{equation}  \label{eq:B_nk_spectral_norm}
\| B_{nk} (\rho_{\nu} - \epsilon_0, \rho_{\nu n})\| = \frac{\mu_1}{(\rho_{\nu} - \epsilon_0 - \mu_1) (\rho_{n\nu} - \mu_1 )^k} \quad \rm a.s.,
\end{equation}
for $n > n_0(\omega)$, 
we can apply Lemma \ref{lem:quad-form-as}
to $B_n = B_{nk}(\rho_\nu-\epsilon_0,\rho_{\nu n})$, which implies that 
$n^{-1} X_1 B_{nk}(\lhat_\nu,\rhonn) X_1^T = O_{\rm a.s.}(1)$ and 
hence that
\begin{equation}
  \label{eq:delK-firstorder}
     K(\hat{\ell}_{\nu}) - K(\rho_{\nu n}) = (\hat{\ell}_{\nu} - \rho_{\nu n }) \cdot n^{-1} X_1 B_{n1} (\hat{\ell}_{\nu}, \rho_{\nu n}) X_1^T
     = (\hat{\ell}_{\nu} - \rho_{\nu n }) \cdot O_{\rm a.s.}(1) \overset{\rm a.s.}{\rightarrow} 0.
   \end{equation}

\subsection{Eigenvectors}
\label{sec:eigenv-incons}

\textit{Supercritical case, $\ell_\nu > 1 + \sqrt \gamma.$}
We show \eqref{eq:pair-cge}, firstly $a_\nu \toas p_{\nu}$.
In view of the first order approximations \eqref{eq:Baik-S} and
\eqref{eq:Kfirstorder}, we regard the eigenvector equation 
$K(\hat{\ell}_\nu) a_\nu = \lhn a_\nu$ as a perturbation 
of $(\rhonn/\ell_\nu) \Sigma p_{\nu} = \rhonn p_{\nu}$. 
Thus, in order to use Lemma \ref{lemma:evec_perturb_bound}, we identify $r$
with $\nu$, $A$ with $(\rhonn/\ell_\nu) \Sigma$ and 
$B$ with $ D_\nu = K(\lhn) - (\rhonn/\ell_\nu) \Sigma.$
The eigenvalue separation is
$  \delta_\nu(A) 
    = (\rhonn/\ell_\nu) \min_{k\neq \nu} \{|\ell_\nu-\ell_k|\} \geq
    \delta_\nu > 0, $
say, and we identify the reduced resolvent $H_r(A)$ with
$\mathcal{R}_{\nu n}$ in (\ref{eq:rresolv}).
To bound $\| D_\nu \|$, use the decomposition
\begin{equation}
  \label{eq:Dnuto0}
  D_\nu  = K(\rhonn) - (\rhonn/\ell_\nu)\Sigma + K(\hat{\ell}_\nu) -
  K(\rhonn) \tooas 0,
\end{equation}
which follows from (\ref{eq:Ktn}) with $t_n = \rhonn, t = \rho_{\nu}$ and from
(\ref{eq:delK-firstorder}). 
Then identify $\mathbf{p}_r(A+B)$ and $\mathbf{p}_r(A)$ with $a_\nu$
and $p_{\nu}$  
respectively. Lemma \ref{lemma:evec_perturb_bound} yields the
important decomposition (\ref{eq:anu-decomp}), namely
$a_\nu - p_{\nu} = - \mathcal{R}_\nu D_\nu p_{\nu} + r_\nu$,
and the bound $\| r_\nu \| = O(\|D_\nu\|^2)$. 
Using (\ref{eq:Dnuto0}), we conclude that $a_\nu - p_{\nu} \toas 0$.

\medskip
Now consider $Q_\nu \to c(\rho_\nu) \Sigma$ a.s..
Write $R_p(t) = (tI_p-S_{22})^{-1}$. Then
\begin{align*}
  Q_\nu & = S_{12}R_p^2(\rho_\nu)S_{21} +
          S_{12}[R_p^2(\lhat_\nu)-R_p^2(\rho_\nu)]S_{21} \\
        & = n^{-1}X_1 \check{B}_{n1} X_1^T 
            -(\lhat_\nu-\rho_\nu) n^{-1}X_1 \check{B}_{n2}X_1^T
          = Q_{\nu 1} + Q_{\nu 2},
\end{align*}
say, where $\check{B}_{n1} = n^{-1}X_2^TR_p^2(\rho_\nu)X_2$,
and from a variant
of the resolvent identity, namely
$$ R^2_{p} (\ell) - R^2_{p} (\rho) 
= - (\ell - \rho)( ( \ell + \rho ) I_p - 2 S_{22}) R^2_{p} (\ell) R^2_{p} (\rho),$$
we obtain
\begin{equation*}
  \check{B}_{n2}
     = n^{-1}X_2^T[(\lhat_\nu+\rho_\nu) I_p -2S_{22}]
     R_p^2(\lhat_\nu)R_p^2(\rho_\nu) X_2 = O_{\rm a.s.}(1),
\end{equation*}
since 
$$\| \check{B}_{n2} \| = 
\frac{(\hat{\ell}_{\nu} + \rho_{\nu} - 2 \mu_1) \mu_1}{(\hat{\ell}_{\nu}-\mu_1)^2 (\rho_{\nu}-\mu_1)^2 } \quad \rm a.s.. $$
Since $\lhat_\nu \toas \rho_\nu$, we conclude that $Q_{\nu 2} \toas
0$. 
For $Q_{\nu 1}$ we use Lemma \ref{lem:quad-form-as}, and note that
\begin{equation*}
  n^{-1}\tr \check{B}_{n1}
    = n^{-1} \tr R^2(\rho_\nu) S_{22} = n^{-1} \tr B_{n1}(\rho_\nu,\rho_\nu),
  \end{equation*}
where the second equality uses the fact that
$R_p^2(\rho) S_{22}$ and $C_n R_n^2(\rho)$ have the same nonzero
eigenvalues $(\rho-\mu_i)^{-2} \mu_i$ for $i = 1, \ldots p \wedge n$.   
  Thus $Q_{\nu 1} \toas c(\rho_\nu)\Sigma$, exactly as at 
(\ref{eq:crho}).
This completes the proof of \eqref{eq:pair-cge} and hence the
supercritical part of  Theorem \ref{th:cosines}.

\medskip
\textit{Subcritical case, $\ell_\nu \leq 1 + \sqrt \gamma$}. \ 
To show that $\langle u_\nu, p_{\nu} \rangle = \| u_\nu \| \langle
a_\nu, p_{\nu} \rangle \toas 0$, it suffices, from \eqref{eq:eqn-pair},
to show that $a_\nu^T Q_\nu a_\nu \toas \infty$. 
We will establish this by showing that $\lambda_{\rm \min}(Q_\nu)
\toas \infty$. 
The approach uses a regularized version
\begin{equation*}
  Q_{\nu \epsilon}(t) = S_{12}[(t I_p - S_{22})^2 + \epsilon^2
  I_p]^{-1} S_{21},
\end{equation*}
for $\epsilon > 0$. Observe that $Q_\nu = Q_\nu(\lhat_\nu) \succ
Q_{\nu \epsilon} (\lhat_\nu)$, so that
\begin{equation*}
  \liminf \lmin(Q_\nu)
  \geq \liminf \lmin(Q_{\nu \epsilon}(\lhat_\nu))
  = \liminf \lmin(Q_{\nu \epsilon}(b_\gamma) + \Delta_{\nu \epsilon}),
\end{equation*}
where $\Delta_{\nu \epsilon} := Q_{\nu \epsilon}(\lhat_\nu)- Q_{\nu \epsilon}(b_\gamma) $ 
(Recall that $\hat{\ell}_\nu \toas b_\gamma$).
We will show that
$\Delta_{\nu \epsilon} \toas 0$,
and
\begin{equation}
  \label{eq:qnuep}
  Q_{\nu \epsilon}(b_\gamma) \toas
  \int x[(b_\gamma-x)^2 + \epsilon^2]^{-1} \ssf_\gamma(dx) \cdot \Sigma
  = c_\gamma(\epsilon) \Sigma,
\end{equation}
say. Since $\lmin(\cdot)$ is a continuous function on $m \times m$
matrices, we conclude that
\begin{equation}
  \label{eq:cgamma}
  \liminf \lmin(Q_\nu) \geq c_\gamma(\epsilon) \lmin(\Sigma),
\end{equation}
and since 
$c_\gamma(\epsilon) \geq  c(b_{\gamma} + \epsilon)$ and $c(b_{\gamma} + \epsilon) \nearrow \infty$
as $\epsilon \searrow
0$ by \eqref{eq:crhonu}, we obtain $\lambda_{\rm \min}(Q_\nu)
\toas \infty$. 

Let us write $Q_{\nu \epsilon}(t) = n^{-1} X_1 \check B_{n \epsilon}(t) X_1^T$, with
\begin{align*}
  \check B_{n \epsilon} (t) & =
           n^{-1} X_2^T[(tI_p-n^{-1}X_2X_2^T)^2+\epsilon^2 I_p]^{-1}X_2 \\
         & = H \diag \{ f_\epsilon(\mu_i,t) \} H^T,
\end{align*}
if we write the singular value decomposition of $n^{-1/2}X_2 = V
\mathcal{M}^{1/2} H^T$, with $\mathcal{M}=\diag (\mu_i)_{i=1}^p$, and
define $f_\epsilon(\mu,t) = \mu[(t-\mu)^2+\epsilon^2]^{-1}$.
Evidently $\| \check B_{n \epsilon}(t) \| \leq \epsilon^{-2}\mu_1$ is bounded a.s.
Thus Lemma \ref{lem:quad-form-as} may be applied to $Q_{\nu
  \epsilon}(b_\gamma)$, and since
\begin{equation*}
  n^{-1} \tr \check B_{n \epsilon}(b_\gamma) \toas \gamma \int x
  [(b_\gamma-x)^2+\epsilon^2]^{-1} F_\gamma(dx) = c_\gamma(\epsilon)
\end{equation*}
from \eqref{eq:companion}, our claim \eqref{eq:qnuep} follows.

Consider now $\Delta_{\nu \epsilon}$. 
Fix $a \in \mathbb{R}^m$ such that $\|a\|_2 = 1$, and set $b = n^{-1/2}H^T X_1^T a$.
We have
\begin{equation*}
a^T \Delta_{\nu \epsilon} a
= \sum_{i=1}^p b_i^2 [f_\epsilon(\mu_i,\lhat_\nu) -
f_\epsilon(\mu_i,b_\gamma)]. 
\end{equation*}
Since $|\partial f_{\epsilon}(\mu, t) / \partial t | = |2 \mu (t-\mu)| / [(t-\mu)^2 + \epsilon^2]^2 \leq \mu / \epsilon^3$ for $\mu, \epsilon > 0$ by AM-GM, we have
\begin{equation*}
| a^T \Delta_{\nu \epsilon} a | 
\leq \mu_1  \epsilon^{-3} |\hat{\ell}_{\nu} - b_{\gamma}| \cdot \|b\|_2^2
= \mu_1  \epsilon^{-3} |\hat{\ell}_{\nu} - b_{\gamma}| a^T S_{11} a 
\leq \mu_1  \epsilon^{-3} |\hat{\ell}_{\nu} - b_{\gamma}| \lhat_1
\overset{\rm a.s.}{\rightarrow}  0,
\end{equation*}
from Cauchy's interlacing inequality for eigenvalues of symmetric matrices and the first order behavior of $\lhat_\nu$ given in \eqref{eq:Baik-S}.
Hence $\Delta_{\nu \epsilon} \toas 0$ and the proof of
\eqref{eq:cgamma} and hence of the subcritical part of Theorem
\ref{th:cosines} is done.

\section{Second Order Results}
\label{sec:eigenv-fluct}

\subsection{Tightness properties}
\label{sec:tightness-properties}

We first establish a decomposition for $K(\rho) - K_0(\rho;\gamma_n)$.
Set $g_\rho(x) = \rho (\rho - x)^{-1}$ and write
\begin{equation*}
  \tr B_n(\rho) = \sum_{i=1}^n \rho (\rho-\mu_i)^{-1}
       = \sum_{i=1}^n g_\rho(\mu_i).
\end{equation*}
\citet{basi04} establish a central limit theorem for the unnormalized
sums
\begin{equation*}
  G_n(g) = \sum_{i=1}^n g(\mu_i) - n\int g(x) {\sf{F}}_{\gamma_n}(dx).
\end{equation*}
[Here it is important that the $n$-dependent deterministic
approximation ${\sf{F}}_{\gamma_n}$ is used rather than
${\sf{F}}_{\gamma}$.]
Recall from \eqref{eq:Kt-conv} that
$K_0(\rho ;\gamma_n) = - \rho \ssm(\rho;\gamma_n) \Sigma$
with  $-\ssm(\rho;\gamma_n) = \int (\rho-x)^{-1}
{\sf{F}}_{\gamma_n}(dx).$
Combining these remarks, we obtain
\begin{align}
  K(\rho)- K_0(\rho;\gamma_n)
   & = n^{-1}[X_1 B_n(\rho) X_1^T - \tr B_n(\rho) \Sigma] \notag \\
   & \qquad + \rho n^{-1} \bigg[\sum_{i=1}^n (\rho-\mu_i)^{-1}
                 -n \int (\rho-x)^{-1} \ssf_{\gamma_n}(dx) \bigg]
     \Sigma \notag \\ 
   & = n^{-1/2} R_n(\rho) +  n^{-1} G_n(g_\rho) \Sigma. \label{eq:Wdecomp}
\end{align}
Proposition \ref{prop:clt-r} tells us that
$R_n(\rho) = O_p(1)$ in the first term.
In the second term, the functions $g_\rho$ are analytic on a
neighborhood of $[0,b_\gamma]$ in $\mC$.
The central limit theorem of \cite{basi04} shows 
in particular that $G_n(g_\rho) = O_p(1)$.
In summary, we obtain for each
$\rho \geq b_\gamma + 3\delta$,
\begin{equation} \label{eq:gen(24)}
  K(\rho) - K_0(\rho;\gamma_n) = n^{-1/2} R_n(\rho)  +  O_p(n^{-1}) = O_p(n^{-1/2}).
\end{equation}

\begin{lemma}
  \label{lem:tightness}
  Assume that Model M holds and that $\ell_\nu > 1 + \sqrt
  \gamma$. For some $b > \rho_1$, let $I$ denote the interval
  $[b_\gamma+3 \delta, b]$. Then
  \begin{gather}
    \lhat_\nu-\rhonn = O_p(n^{-1/2}),      \label{eq:evaltight} \\
    a_\nu - p_{\nu} = O_p(n^{-1/2}),    \label{eq:evectight} \\
    \{ n^{1/2}[K(\rho)-K_0(\rho ;\gamma_n)], \rho \in I\} \ \text{is uniformly
      tight}. \label{eq:Ktight}
  \end{gather}
\end{lemma}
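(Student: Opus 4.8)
\textit{Plan.} I would prove the uniform-tightness statement \eqref{eq:Ktight} first and then read off \eqref{eq:evaltight} and \eqref{eq:evectight} from the eigenvalue and eigenvector perturbation bounds already used in Section \ref{sec:first-order-behavior} and Appendix \ref{sec:appendix-c:-proof}. For \eqref{eq:Ktight}, start from the exact decomposition \eqref{eq:Wdecomp}, $n^{1/2}[K(\rho)-K_0(\rho;\gamma_n)] = R_n(\rho) + n^{-1/2}G_n(g_\rho)\Sigma$, which holds on the event $E_{n\delta} = \{\mu_1 \le b_\gamma+\delta\}$ of probability tending to $1$. On $E_{n\delta}$, for $\rho$ in the compact interval $I = [b_\gamma+3\delta,b]$ the resolvent $(\rho I_n - C_n)^{-1}$ and all its $\rho$-derivatives are bounded in operator norm uniformly in $\rho$ and $n$, so the matrix $B_n(\rho)$ of \eqref{eq:Bn-def} is uniformly bounded and uniformly Lipschitz in $\rho\in I$, and the scalar functions $g_\rho(x) = \rho(\rho-x)^{-1}$ are analytic on a fixed neighbourhood of $[0,b_\gamma]$ with $\rho$-derivatives bounded uniformly in $\rho\in I$. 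Tightness of $\{n^{1/2}[K(\rho)-K_0(\rho;\gamma_n)]:\rho\in I\}$ in $C(I;\mathbb{R}^{m\times m})$ then follows from (i) finite-dimensional tightness --- for each fixed $\rho$, $R_n(\rho)=O_p(1)$ by Proposition \ref{prop:matCLT} (exactly as in the proof of Proposition \ref{prop:clt-r}), and $G_n(g_\rho)=O_p(1)$ by the Bai--Silverstein CLT of \citet{basi04}, so $n^{-1/2}G_n(g_\rho)\Sigma = o_p(1)$; and (ii) stochastic equicontinuity of $\rho\mapsto R_n(\rho)$ --- conditioning on $X_2$ and applying Lemma \ref{lem:basi-b26} entrywise to $B_n(\rho)-B_n(\rho')$ (the Fubini device of Lemma \ref{lem:quad-form-as}) gives, on $E_{n\delta}$, $\E[\,\|R_n(\rho)-R_n(\rho')\|^2 \mid X_2\,] \le C\|B_n(\rho)-B_n(\rho')\|^2 \le C'|\rho-\rho'|^2$, which suffices for the Kolmogorov continuity criterion, while the analogous equicontinuity of $\rho\mapsto G_n(g_\rho)$ is supplied by the process-level version of \citet{basi04} (it also follows from the pointwise statement via Cauchy's estimates, using analyticity of $g_\rho$ in $\rho$). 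This yields \eqref{eq:Ktight}, and in particular $\sup_{\rho\in I}\|K(\rho)-K_0(\rho;\gamma_n)\| = O_p(n^{-1/2})$.

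\textit{Eigenvalue tightness.} Write $\lambda_\nu(\rho)$ and $\lambda_{0\nu,n}(\rho) = -\rho\,\ssm(\rho;\gamma_n)\,\ell_\nu - \rho$ for the $\nu$th eigenvalues of $K(\rho)-\rho I_m$ and $K_0(\rho;\gamma_n)-\rho I_m$. Weyl's inequality, as in \eqref{eq:evalpbd} but with the $\gamma_n$-centring, gives $\sup_{\rho\in I}|\lambda_\nu(\rho)-\lambda_{0\nu,n}(\rho)| \le \sup_{\rho\in I}\|K(\rho)-K_0(\rho;\gamma_n)\| = O_p(n^{-1/2})$. By the defining property $-\ssm(\rhonn;\gamma_n)=1/\ell_\nu$ of $\rhonn = \rho(\ell_\nu,\gamma_n)$ we have $\lambda_{0\nu,n}(\rhonn)=0$, and, exactly as in Section \ref{sec:eigenvalues}, $\partial_\rho\lambda_{0\nu,n}(\rho) = -1 - \ell_\nu\int x(\rho-x)^{-2}\ssf_{\gamma_n}(dx) < -1$ for $\rho\in I$ and all large $n$. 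On $E_{n\delta}$ one has $\lambda_\nu(\lhat_\nu)=0$ by \eqref{eq:S-decomp}, and $\lhat_\nu\in I$ with probability tending to $1$ by \eqref{eq:bai-yin} and $\lhat_\nu\toas\rho_\nu$; a first-order Taylor expansion of $\lambda_{0\nu,n}$ about $\rhonn$ then gives $|\lhat_\nu - \rhonn| \le |\lambda_{0\nu,n}(\lhat_\nu)| = |\lambda_{0\nu,n}(\lhat_\nu)-\lambda_\nu(\lhat_\nu)| = O_p(n^{-1/2})$, which is \eqref{eq:evaltight}.

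\textit{Eigenvector tightness.} Apply the eigenvector perturbation bound, Lemma \ref{lemma:evec_perturb_bound}, exactly as in Section \ref{sec:eigenv-incons}, regarding $K(\lhat_\nu)a_\nu = \lhat_\nu a_\nu$ as a perturbation of $(\rhonn/\ell_\nu)\Sigma\,p_\nu = \rhonn p_\nu$; this gives \eqref{eq:anu-decomp}, $a_\nu - p_\nu = -\cR_{\nu n}D_\nu p_\nu + r_\nu$, with $D_\nu = K(\lhat_\nu)-(\rhonn/\ell_\nu)\Sigma$ and $\|r_\nu\| = O(\|D_\nu\|^2)$. Split $D_\nu = [K(\lhat_\nu)-K(\rhonn)] + [K(\rhonn)-K_0(\rhonn;\gamma_n)] + [K_0(\rhonn;\gamma_n)-(\rhonn/\ell_\nu)\Sigma]$: the last bracket vanishes since $K_0(\rhonn;\gamma_n) = -\rhonn\ssm(\rhonn;\gamma_n)\Sigma = (\rhonn/\ell_\nu)\Sigma$, the middle bracket is $O_p(n^{-1/2})$ by \eqref{eq:gen(24)}, and the first bracket equals $-(\lhat_\nu-\rhonn)\,n^{-1}X_1 B_{n1}(\lhat_\nu,\rhonn)X_1^T$ by \eqref{eq:delK-firstorder}, which is $O_p(n^{-1/2})$ because the quadratic form is $O_{\rm a.s.}(1)$ and $\lhat_\nu-\rhonn = O_p(n^{-1/2})$ from the previous step. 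Hence $\|D_\nu\|=O_p(n^{-1/2})$, $\|r_\nu\|=O_p(n^{-1})$, and $a_\nu - p_\nu = O_p(n^{-1/2})$, which is \eqref{eq:evectight}.

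\textit{Main obstacle.} The only genuinely new work is the passage from the pointwise estimate \eqref{eq:gen(24)} to the uniform statement \eqref{eq:Ktight}, i.e.\ the stochastic equicontinuity in $\rho$: one needs a quantitative modulus-of-continuity bound for the quadratic-form fluctuation $\rho\mapsto R_n(\rho)$ and for $\rho\mapsto G_n(g_\rho)$. The former is furnished by the increment bound from Lemma \ref{lem:basi-b26} together with the conditioning-on-$X_2$ argument of Lemma \ref{lem:quad-form-as}; the latter rests on the functional (not merely pointwise) form of the Bai--Silverstein CLT \citet{basi04}. Steps two and three are then routine consequences of the perturbation lemmas already deployed in Section \ref{sec:first-order-behavior}.
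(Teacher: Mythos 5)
Your proposal is correct and follows essentially the same route as the paper: the decomposition \eqref{eq:Wdecomp}, a second-moment increment bound for $R_n(\rho)$ via Lemma \ref{lem:basi-b26} on $E_{n\delta}$ (the paper cites Billingsley's criterion, you cite Kolmogorov's — same estimate), the process-level tightness of $G_n(g_\rho)$ from \citet{basi04}, and then the perturbation bound for $D_\nu$ to get \eqref{eq:evectight}. The only cosmetic difference is in \eqref{eq:evaltight}, where you invert $\lambda_{0\nu,n}$ by the mean value theorem using $\partial_\rho\lambda_{0\nu,n} < -1$ and $\lambda_\nu(\lhat_\nu)=0$, while the paper brackets $\lhat_\nu$ between $\rhonn \pm Mn^{-1/2}$ by a sign-change argument — the same idea, and both need the (Section \ref{sec:eigenvalues}) identification that the vanishing eigenvalue of $K(\lhat_\nu)-\lhat_\nu I_m$ is indeed the $\nu$th one, which you invoke a bit tersely via \eqref{eq:S-decomp} but which is covered by the first-order results already established.
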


\begin{proof}
  We first remark that a matrix valued process
  $\{ X_n(\rho) \in \mR^{m \times m}, \rho \in I \}$ is uniformly
  tight iff each of the scalar processes formed from the matrix
  entries $e_k^T X_n(\rho) e_l$ is (since $m$ stays fixed throughout).
  
  We begin with \eqref{eq:Ktight} and work with the two terms in
  \eqref{eq:Wdecomp} in turn.
  Let $\mP_n, \E_n$ denote probability and expectation conditional on the event $E_{n  \delta} = \{\mu_1 \leq b_\gamma + \delta \}$. 
We show tightness of $R_n(\rho)$ on $I$
  by establishing the moment criterion of \cite[eq. (12.51)]{Bill68}:
  we exhibit $C$ such that for each
  $k,l \leq m$ and
 $\rho, \rho' \in I$,
  \begin{equation*}
    \E_n |e_k^T[R_n(\rho) -  R_n(\rho')] e_l|^2 \leq C(\rho-\rho')^2.
  \end{equation*}
  Write the quadratic form as $\bx^T \check B_n \by - \sigma_{kl} \tr \check B_n$ with
  $\bx = X_1^T e_k$ and $\by = X_1^T e_l$ being the
    $k^{th}$ and $l^{th}$ rows of $X_1$
and $\check B_n = n^{-1/2}[B_n(\rho)-B_n(\rho')]$.
  Lemma \ref{lem:basi-b26} with $p=2$ bounds the left side above by
  $C \E_n \tr \check B_n^2$.
  Now $n^{1/2} \check B_n$ has eigenvalues 
  $(\rho' - \rho) \mu_i  (\rho' - \mu_i)^{-1} (\rho - \mu_i)^{-1}$, 
  so that on $E_{n \delta}$ we
  have $\tr \check B_n^2 \leq C(\rho'-\rho)^2$, which establishes the
  moment condition.

%
   
  Tightness of $G_n(g_\rho)$, and \textit{a fortiori} that of
  $n^{-1/2} G_n(g_\rho)$, follows from that of $\widehat{M}_n(z)$ in
  \cite[Lemma 1.1]{basi04} and its following argument, by taking $x_r < b_\gamma + 3\delta$ so that $| g_\rho(z) |$ is bounded above by a constant for $z \in \mathcal{C}\cup \bar{\mathcal{C}}$.

To establish \eqref{eq:evaltight}, we work conditionally on $E_{n \delta}$. The tightness just established yields that, for given $\epsilon$, a value $M$ for which the event $E_n'$ defined by
\begin{equation*}
  \sup_{\rho\in I} n^{1/2} \| K(\rho) - K_0(\rho;\gamma_n) \| > \hf M
\end{equation*}
has $\mP_n$-probability at most $\epsilon$.
Then, we modify the argument of Section \ref{sec:eigenvalues}.
For all large enough $n$ such that $b_\gamma + 3\delta > (1 + \sqrt{\gamma_n})^2$, $\lambda_{0 \nu}(\rho) = - \rho \ssm(\rho;\gamma_n)\ell_\nu - \rho$ is the $\nu^{\text{th}}$ eigenvalue of $K_0(\rho;\gamma_n) - \rho I$ for  $\rho \in I$
(note using now $\ssm(\rho;\gamma_n)$ and $\ssf_{\gamma_n}$ in place of
$\ssm(\rho;\gamma)$ and $\ssf_{\gamma}$).
We have $\lambda_{0 \nu}(\rhonn) = 0$, and set
$\rho_{n \pm} = \rhonn \pm M n^{-1/2}$.
Since $\partial_\rho \lambda_{0 \nu}(\rho) < - 1$, we have 
$ \lambda_{0 \nu}(\rho_{n-}) \geq Mn^{-1/2}$ and
$ \lambda_{0 \nu}(\rho_{n+}) \leq -Mn^{-1/2}$.
Hence the eigenvalue perturbation bound
\eqref{eq:evalpbd} shows that on event $E_n'^{c}$,
\begin{equation*}
  \lambda_{\nu}(\rho_{n-}) \geq \hf Mn^{-1/2}, \qquad
  \lambda_{\nu}(\rho_{n+}) \leq - \hf Mn^{-1/2}.
\end{equation*}
and so $\lhat_\nu \in (\rho_{n-}, \rho_{n+} )$ as in Section \ref{sec:eigenvalues}, which implies $|\lhat_\nu- \rho_{\nu n}| \leq Mn^{-1/2}$, 
hence \eqref{eq:evaltight} is proved.

Finally, it is now easy to show \eqref{eq:evectight}. Indeed, from
the perturbation representation \eqref{eq:anu-decomp}, and noting that
$\| \mathcal{R}_{\nu n} \| \leq C$ , we have $  a_\nu - p_{\nu}  = O_p( \|D_\nu\|)$.
Since $D_\nu = K(\lhat_\nu) - K_0(\rhonn;\gamma_n)$, we have
\begin{equation*}
  \| D_\nu \|
  \leq \| K(\lhat_\nu) - K(\rhonn)\| + \|K(\rhonn)-K_0(\rhonn;\gamma_n)\|.
\end{equation*}
The first term is $O_p(n^{-1/2})$ by \eqref{eq:delK-firstorder} and
\eqref{eq:evaltight}, while the second is $O_p(n^{-1/2})$ by
\eqref{eq:Ktight}. This completes the proof.
\end{proof}

\subsection{Eigenvalue CLT}
\label{sec:eigenv-clt}


This subsection completes the proof of Theorem
\ref{th:eval-normality}. The approach is to combine the vector equations
$K(\lhat_\nu) a_\nu = \lhat_\nu a_\nu$ and
$K_0(\rhonn; \gamma_n) p_\nu = \rhonn p_\nu$ with
the expansion \eqref{eq:Kdiff2} for
$K(\lhat_\nu) - K(\rhonn)$ and the Gaussian approximation
\eqref{eq:gen(24)} for $K(\rhonn) - K_0(\rhonn; \gamma_n)$
in order to obtain the key approximate equation \eqref{eq:cltready},
namely 
\begin{equation} \label{eq:clt-ready1}
  \sqrt n (\lhat_\nu-\rhonn)[1+c(\rho_\nu)\ell_\nu + o_p(1)]
      = p_{\nu}^T R_{n \nu} p_{\nu} + o_p(1).
    \end{equation}
To ``squeeze'' the two vector equations into this scalar equation, we
use the $O_p(n^{-1/2})$ bound on $a_\nu - p_\nu$ established in the
previous subsection.

To begin, since $[K(\lhat_\nu) - \lhat_\nu I_m] a_\nu=0$, we have
\begin{equation}   \label{eq:small}
  p_{\nu}^T [K(\lhat_\nu) - \lhat_\nu I_m] p_{\nu}
  = (a_\nu-p_{\nu})^T [K(\lhat_\nu) - \lhat_\nu I_m] (a_\nu-p_{\nu})
  = O_p(n^{-1}),
\end{equation}
as $\| K(\lhat_\nu) - \lhat_\nu I_m \| = O_p(1)$ by \eqref{eq:Kfirstorder} and 
$a_\nu-p_{\nu} = O_p(n^{-1/2})$ by Lemma \ref{lem:tightness}.
Since $[ K_0(\rhonn; \gamma_n) - \rhonn I_m ] p_{\nu} = 0$, we also have
\begin{align}
  p_{\nu}^T  [K(\lhat_\nu) - \lhat_\nu I_m] p_{\nu}
  & = p_{\nu}^T [ K(\lhat_\nu) -  K_0(\rhonn ;\gamma_n) - (\lhat_\nu - \rhonn)I_m] p_{\nu} \notag \\
  & = p_{\nu}^T [ K(\lhat_\nu) -  K(\rhonn) - (\lhat_\nu - \rhonn)I_m] p_{\nu}
  + p_{\nu}^T [ K(\rhonn) -  K_0(\rhonn;\gamma_n) ] p_{\nu} \notag \\
  & = E_{n1} + E_{n2},   \label{eq:En1n2}
\end{align}
say, and will show that
\begin{align}
  \label{eq:En1}
  E_{n1} & = -(\lhat_\nu-\rhonn)[1 + c(\rho_\nu)\ell_\nu + o_p(1)], \\
  \label{eq:En2}
  E_{n2} & = n^{-1/2} p_{\nu}^T R_{n\nu} p_{\nu} + o_p(n^{-1/2}),
\end{align}
Indeed, \eqref{eq:En2} follows from \eqref{eq:Wdecomp} and Lemma
\ref{lem:tightness}. For $E_{n1}$, 
rewrite \eqref{eq:Kdiff2} as
\begin{align*}
    K(\lhat_\nu) - K(\rhonn)
   & = -(\lhat_\nu-\rhonn) n^{-1} X_1 B_{n1}(\rhonn,\rhonn) X_1^T 
      + (\lhat_\nu-\rhonn)^2 n^{-1} X_1  B_{n2}(\lhat_\nu,\rhonn)
    X_1^T.  
\end{align*}
For the first term, 
Lemma \ref{lem:quad-form-as} says that
$ n^{-1}X_1B_{n1}(\rhonn,\rhonn) X_1^T  \tooas
         c(\rho_\nu) \Sigma$, with
\begin{equation}
  c(\rho_\nu)  = \aslim n^{-1} \tr B_{n1}(\rhonn,\rhonn)
                      = \int x(\rho_\nu-x)^{-2} {\sf{F}}_\gamma(dx).
  \label{eq:crho}
\end{equation}
The second term is $O_{\rm a.s.}( (\lhat_\nu-\rho_\nu)^2 )$ by 
\eqref{eq:B_nk_quad_ineq}, \eqref{eq:B_nk_spectral_norm} and Lemma \ref{lem:quad-form-as},
and so \eqref{eq:delK} and thus the error term for $E_{n1}$ follow from Lemma \ref{lem:tightness}.


Combining \eqref{eq:small}--\eqref{eq:En2}
we obtain \eqref{eq:clt-ready1}. 
Asymptotic normality of $\sqrt n (\lhat_\nu-\rhonn)$ now follows from
Proposition \ref{prop:clt-r}, with the asymptotic variance
\begin{equation*}
  \sigma^2(\ell_\nu,\gamma)
    = [1+c(\rho_\nu) \ell_\nu]^{-2} \textrm{Var}(p_{\nu}^T R^{\nu} p_{\nu}).
\end{equation*}
Combining formula \eqref{eq:slutsky} in Appendix A for
$1+c(\rho_\nu)\ell_\nu$ with 
variance \eqref{eq:var-Rnunu}, we obtain formula \eqref{eq:sig-def}
for $\sigma^2(\ell_\nu,\gamma)$ and so complete the proof of the CLT for
$\lhat_\nu$.

\subsection{Eigenvector CLT}
\label{sec:eigenv-fluct-2}

Consider again indices $\nu$ such that $\ell_\nu > 1 + \sqrt \gamma$. 
We use \eqref{eq:main2} 
and \eqref{eq:delK} to refine decomposition (\ref{eq:Dnuto0}) to
yield \eqref{eq:Dnud0}, and consequently
$\cR_{\nu n} D_\nu p_{\nu} = n^{-1/2} \cR_{\nu n} R_{n\nu} p_{\nu} +
o_p(n^{-1/2})$
from $\cR_{\nu n} \Sigma p_{\nu} = 0$, as outlined before.
Also, from the proof of Lemma \ref{lem:tightness}, we have $\|r_\nu\| = O_p(n^{-1})$. 
We can then rewrite the perturbation decomposition
(\ref{eq:anu-decomp}) in the form (\ref{eq:anu-fluct}), namely
$\sqrt{n}(a_\nu - p_{\nu}) = - \cR_{\nu n} R_{n\nu} p_{\nu} + o_p(1)$.
The CLT for $a_\nu$ now follows from Proposition \ref{prop:clt-r}:
\begin{equation*}
  \mathcal{R}_{\nu n}R_{n\nu} p_{\nu} \toodist \mathcal{R}_\nu
  R^\nu p_{\nu} \sim N(0,\Gamma_\nu),
\end{equation*}
where $\cRn$ is given by \eqref{eq:rresolv} with $\rho_\nu$ in place
of $\rhonn$.
Observe that
\begin{equation*}
  \cRn p_\mu =
  \begin{cases}
    (\ell_\nu/\rho_\nu) (\ell_\mu-\ell_\nu)^{-1}p_\mu & \mu \neq \nu \\
    0      & \mu = \nu,
  \end{cases}
\end{equation*}
so that from \eqref{eq:Rcolcov},
\begin{equation*}
   \Gamma_\nu = 
  \cRn \textrm{Cov}(R^\nu p_{\nu}) \cRn
   = \theta_\nu \ell_\nu \cRn \Sigma \cRn + \omega_\nu \cRn K^\nu \cRn.
\end{equation*}
Using the spectral representation of $\Sigma$ and
\eqref{eq:th-om-nu} for $\theta_\nu$,
\begin{align*}
 \theta_\nu \ell_\nu \cRn \Sigma \cRn 
& = \dot \rho_\nu^{-1}
                     \sum_{\mu \neq \nu}
                     \frac{\ell_\nu \ell_\mu}{(\ell_\mu-\ell_\nu)^2} p_\mu
                     p_{\mu}^T,
\end{align*}       
while from \eqref{eq:kapmumu}, \eqref{eq:innerproduct}, and
\eqref{eq:th-om-nu} for $\omega_\nu$,
\begin{align*}   
   \omega_\nu \cRn K^\nu \cRn & = 
                     \sum_{\mu, \mu' \neq \nu}
     \frac{[\mathcal{P}^{\mu  \mu'  \nu \nu}, \kappa]}{(\ell_\mu-\ell_\nu)(\ell_{\mu'}-\ell_\nu)}
                     p_\mu p_{\mu'}^T.
\end{align*}
Thus we have $\Gamma_\nu = \sum_{\mu, \mu' \neq \nu} G_{\nu,\mu
  \mu '} \, p_\mu  p_{\mu'}^T $, with
\begin{align*}
  G_{\nu,\mu \mu '}
             & = (\ell_\nu - \ell_{\mu})^{-1}
            \{ \dot \rho_\nu^{-1} \ell_\nu \ell_\mu \delta_{\mu \mu'}
               + [\mathcal{P}^{\mu  \mu'  \nu \nu}, \kappa] \}
               (\ell_\nu - \ell_{\mu '})^{-1} \\
             & = (\mathcal{D}_\nu \tilde{\Sigma}_\nu
              \mathcal{D}_\nu)_{\mu \mu'}.
\end{align*}
Finally,  the rotation $P$ satisfies
$ p_\mu p_{\mu '}^T  = P e_{\mu} e_{\mu'} P^T$, so
we conclude that the asymptotic covariance of $\sqrt{n} a_\nu$ equals
$\Gamma = P G_\nu P^T$ while that of $\sqrt{n} P^T a_\nu$ is
$ P^T \Gamma P = G_\nu = \mathcal{D}_\nu \tilde{\Sigma}_\nu
              \mathcal{D}_\nu$.
This completes the proof of Theorem
\ref{th:evec-normality}.

\section*{Appendix A: Stieltjes transform evaluations}
\label{sec:appendixA}

Let $\ssm(z;\gamma) = \int (x-z)^{-1} \ssf_\gamma(dx)$
denote the Stieltjes transform of the companion MP law 
$\ssf_\gamma(dx)$. 
It is well defined and analytic for $z \in \mC$ outside the support of
$\ssf_\gamma$. 
In particular, it is strictly increasing for $z > b_\gamma = (1+\sqrt
\gamma)^2.$ 
In \citet{siba95} it is shown for $z \in \mC^+$ that 
$\ssm = \ssm(z;\gamma)$ is the unique solution in $\mC^+$ to the equation
\begin{equation*}
  z = - \frac{1}{\ssm} + \frac{\gamma}{1+\ssm},
\end{equation*}
or equivalently to
the quadratic equation
\begin{equation}
  \label{eq:MPE}
  z \ssm^2 + (z+1-\gamma) \ssm + 1 = 0.
\end{equation}
Allowing $z \to b_\gamma$ from above on the real axis, 
one finds that
$\ssm(b_\gamma; \gamma) = -(1+ \sqrt \gamma)^{-1}$. 

The function $d(t) = -\ssm(t;\gamma)\ell-1$, defined for $t >
b_\gamma$, is decreasing from $\ell(1+\sqrt \gamma)^{-1}-1$ at
$t=b_\gamma$ to $-1$ as $t \to \infty$.
Consequently, the equation $d(t)=0$ has a solution 
$\rho$ in $(b_\gamma,\infty)$, which is clearly unique,
exactly when $\ell > 1 + \sqrt \gamma$, i.e. above
the phase transition.
The equation may be rewritten as
\begin{equation}
  \label{eq:rho-sol}
  \int(\rho-x)^{-1} \ssf_\gamma(dx) = \ell^{-1},
\end{equation}
and the solution $\rho$ can be found by inserting $\ssm =  - 1/\ell$
into \eqref{eq:MPE} and solving for $z = \rho$, which yields
\begin{equation}
   \label{eq:rhosol}
  \rho = \rho(\ell,\gamma) = \ell + \ell\gamma/(\ell-1).
\end{equation}

Dropping the explicit dependence on $\gamma$ in $\ssm(z)$, we
differentiate with respect to $\ell$ in the equation 
$\ssm(\rho) = -\ell^{-1}$ and in (\ref{eq:rhosol}) to obtain
\begin{equation*}
  \ssm'(\rho) \rho'(\ell)   = \ell^{-2}, \qquad \quad
  \rho'(\ell)  = 1 - \gamma/(\ell-1)^2.
\end{equation*}
From the Stieltjes transform definition we then obtain
\begin{equation*}
  \int (\rho-x)^{-2} \ssf_\gamma(dx) 
    = \ssm'(\rho)
    = [\ell^2 \rho'(\ell)]^{-1}.
\end{equation*}

We can now evaluate the (almost sure) limits
\begin{align}
    \theta_\nu & = \lim n^{-1} \tr B_n^2(\rhonn) 
               = \int \rho_\nu^2 (\rho_\nu-x)^{-2} \ssf_\gamma(dx)
                 \notag \\
               & = \frac{1}{\dot \rho_\nu} \bigg(
                 \frac{\rho_\nu}{\ell_\nu}\bigg)^2 
    = \frac{(\ell_\nu-1+\gamma)^2}{(\ell_\nu-1)^2-\gamma} \label{eq:thetanu} \\
  c(\rho_\nu)
  & = \lim n^{-1} \tr B_{n1}(\rhonn,\rhonn)
    = \int x (\rho_\nu-x)^{-2} \ssf_\gamma(dx) \notag \\
  & = \int [\rho_\nu(\rho_\nu-x)^{-2} -(\rho_\nu-x)^{-1}]
    \ssf_\gamma(dx) \notag  \\
  & = \frac{\rho_\nu}{\ell_\nu^2 \dot \rho_\nu} - \frac{1}{\ell_\nu} 
    = - \frac{1}{\dot \rho_\nu} \frac{\partial}{\partial \ell_\nu}
    \left( \frac{\rho_\nu}{\ell_\nu} \right)
    = \frac{\gamma}{(\ell_\nu-1)^2 -\gamma},  \label{eq:crhonu}
\end{align}
and, recalling \eqref{eq:rho-sol}, we also have
\begin{equation}
  \label{eq:slutsky}
  \begin{split}
  1 + \ell_\nu c(\rho_\nu)
   & = \ell_\nu \left( c(\rho_\nu) + \int (\rho_\nu-x)^{-1}
     \ssf_\gamma(dx) \right) \\
   & = \ell_\nu \rho_\nu \int (\rho_\nu-x)^{-2} \ssf_\gamma(dx)
     = \frac{1}{\dot \rho_\nu}  \left( \frac{\rho_\nu}{\ell_\nu} \right)
     = \frac{1+\gamma(\ell_\nu-1)^{-1}}{1-\gamma(\ell_\nu-1)^{-2}}.
  \end{split}
\end{equation}



\begin{lemma}
  \label{lem:omegaval} With $B_n$ defined at (\ref{eq:Bn-def}) and $ t = \rho_{\nu n}$,
  \begin{equation*}
    \omega_\nu = \plim n^{-1} \sum_{i=1}^n b_{n,ii}^2
           = (\rho_\nu/\ell_\nu)^2
           = [1+\gamma/(\ell_\nu -1)]^2.
  \end{equation*}
\end{lemma}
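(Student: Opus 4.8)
The plan is to reduce the lemma to a single‑entry concentration property for the resolvent $R(t) = (tI_n - C_n)^{-1}$, $C_n = n^{-1}X_2^T X_2$, noting that $B_n(\rhonn) = \rhonn R(\rhonn)$ and hence $b_{n,ii} = \rhonn R(\rhonn)_{ii}$. Two inputs suffice. First, the averaged trace: $\bar b_n := n^{-1}\tr B_n(\rhonn) \tooas \rho_\nu/\ell_\nu$, already contained in \eqref{eq:Kt-conv} (equivalently \eqref{eq:lln} with $r=1$) together with $-\ssm(\rho_\nu;\gamma) = \ell_\nu^{-1}$ from \eqref{eq:rho-sol}. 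Second, the pointwise statement $b_{n,11}\toprob\rho_\nu/\ell_\nu$. Granting the second, the rest is bookkeeping: since the columns of $X_2$ are i.i.d., the entries $(b_{n,ii})_{i=1}^n$ are exchangeable; on $E_{n\delta} = \{\mu_1 \le b_\gamma + \delta\}$ one has $|b_{n,ii}| = \rhonn(\rhonn-\mu_i)^{-1} \le C$ for all large $n$ (as $\rhonn \to \rho_\nu > b_\gamma + 3\delta$ while $\mu_i \le \mu_1 \le b_\gamma + \delta$); hence, writing $c := \rho_\nu/\ell_\nu$, bounded convergence gives $\E[\mathbbm{1}_{E_{n\delta}}(b_{n,11}-c)^2] \to 0$, and exchangeability upgrades this to $\E[\mathbbm{1}_{E_{n\delta}}\,n^{-1}\sum_i(b_{n,ii}-c)^2] \to 0$, so $n^{-1}\sum_i(b_{n,ii}-c)^2 \toprob 0$ because $\mathbbm{1}_{E_{n\delta}}\toas 1$. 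The algebraic identity $n^{-1}\sum_i b_{n,ii}^2 = n^{-1}\sum_i(b_{n,ii}-c)^2 + 2c\,\bar b_n - c^2$ together with the first input then gives $\plim n^{-1}\sum_i b_{n,ii}^2 = 2c^2 - c^2 = c^2 = (\rho_\nu/\ell_\nu)^2$, and $\rho_\nu/\ell_\nu = 1 + \gamma/(\ell_\nu - 1)$ by \eqref{eq:rhosol}.

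The one substantive step, $b_{n,11}\toprob\rho_\nu/\ell_\nu$, I would obtain by a leave‑one‑out (Schur complement) expansion. Write $\tilde x_1$ for the first column of $X_2$, $X_2^{(1)}$ for the remaining $p\times(n-1)$ block, and $\tilde C^{(1)} = n^{-1}X_2^{(1)}(X_2^{(1)})^T$, which is independent of $\tilde x_1$. Combining the Schur formula for the $(1,1)$ entry of $\rho I_n - C_n$, the companion identity $X_2^{(1)}(\rho I_{n-1} - n^{-1}(X_2^{(1)})^T X_2^{(1)})^{-1}(X_2^{(1)})^T = n(\rho I_p - \tilde C^{(1)})^{-1}\tilde C^{(1)}$, and the resolvent relation $I_p + (\rho I_p - \tilde C^{(1)})^{-1}\tilde C^{(1)} = \rho(\rho I_p - \tilde C^{(1)})^{-1}$ yields, for $\rho = \rhonn$,
\begin{equation*}
  b_{n,11} = \rho R(\rho)_{11} = \big(1 - n^{-1}\tilde x_1^T(\rho I_p - \tilde C^{(1)})^{-1}\tilde x_1\big)^{-1}.
\end{equation*}
On $\tilde E_n := \{\|\tilde C^{(1)}\| \le b_\gamma + \delta\}$, which depends only on $X_2^{(1)}$ (so is independent of $\tilde x_1$) and has probability tending to $1$ by Bai--Yin applied to $X_2^{(1)}$, the matrix $(\rho I_p - \tilde C^{(1)})^{-1}$ has operator norm $\le \delta^{-1}$ for large $n$; there Lemma \ref{lem:basi-b26} (with $\bm x = \bm y = \tilde x_1$, $p = 2$, using $\E\|x\|^4 < \infty$) gives $n^{-1}\tilde x_1^T(\rho I_p - \tilde C^{(1)})^{-1}\tilde x_1 - n^{-1}\tr(\rho I_p - \tilde C^{(1)})^{-1} \toprob 0$. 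Since the empirical spectral distribution of $\tilde C^{(1)}$ converges weakly to $F_\gamma$ with top eigenvalue staying below $\rho_\nu$, $n^{-1}\tr(\rho I_p - \tilde C^{(1)})^{-1} \to \gamma\int(\rho_\nu - x)^{-1}F_\gamma(dx) = -\ssm(\rho_\nu;\gamma) - (1-\gamma)/\rho_\nu$, using the companion decomposition \eqref{eq:companion}. Thus the denominator above tends in probability to $1 + \ssm(\rho_\nu;\gamma) + (1-\gamma)/\rho_\nu = -1/(\rho_\nu\ssm(\rho_\nu;\gamma))$, the last equality being the quadratic equation \eqref{eq:MPE} evaluated at $z = \rho_\nu$; hence $b_{n,11}\toprob -\rho_\nu\ssm(\rho_\nu;\gamma) = \rho_\nu/\ell_\nu$ by \eqref{eq:rho-sol}.

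I expect the leave‑one‑out concentration to be the only real obstacle: showing that an individual resolvent diagonal entry agrees with the Stieltjes transform up to $o_p(1)$. The remaining ingredients -- the companion/Schur algebra, the operator‑norm control via Bai--Yin \eqref{eq:bai-yin}, the Stieltjes‑transform identities of Appendix A, and the exchangeability‑plus‑bounded‑convergence reduction -- are routine. It is worth noting that only a finite fourth moment (hence $L^2$, not almost sure, control) enters, which is precisely why the statement is phrased with $\plim$.
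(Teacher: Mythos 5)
Your proposal is correct and follows essentially the same route as the paper's proof: a leave-one-out identity $b_{n,ii}=(1-n^{-1}x_i^T(\rhonn-S_{ni})^{-1}x_i)^{-1}$ (the paper gets it via the Woodbury formula, you via the Schur complement of the companion resolvent — the same algebra), quadratic-form concentration from Lemma \ref{lem:basi-b26}, the companion/MP Stieltjes-transform identities to evaluate the limit $\rho_\nu/\ell_\nu$, and exchangeability plus bounded convergence on the Bai--Yin event to pass from a single diagonal entry to the average. Your bookkeeping via $n^{-1}\sum_i(b_{n,ii}-c)^2$ and $\bar b_n$, and your use of in-probability (rather than a.s.) control of $c_{n,ii}$, are only cosmetic variations on the paper's argument.
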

\begin{proof}
  The argument is a  variant of Lemma 6.1 of \cite{bayo08}.
  Denote the $i$th column of $X_2$ by $x_i$.
The two forms for $B_n$ in (\ref{eq:Bn-def}) yield for $b_{n,ii}$
respectively 
\begin{equation}
  b_{n,ii} 
      = 1 + n^{-1}x_i^T(\rhonn-S_n)^{-1}x_i 
      = \rhonn [(\rhonn - C_n)^{-1}]_{ii}, \label{eq:bnii}
\end{equation}
where $S_n = n^{-1} X_2X_2^T$ and $C_n = n^{-1} X_2^TX_2$.
Both forms are useful; we first rewrite the first form after setting 
$X_{2i}$ for  $X_2$ with the $i$th column deleted, and $S_{ni} := n^{-1} X_{2i} X_{2i}^T$. 
Then $S_n = S_{ni} + n^{-1} x_i x_i^T$ and
\begin{align*}
  b_{n,ii} 
           &  = 1 + n^{-1} x_i^T (\rho_\nu - S_{ni}
           -n^{-1} x_i x_i^T)^{-1} x_i 
              = (1-c_{n,ii})^{-1}, 
\intertext{where we have used the Woodbury formula $1+u^T(A-uu^T)^{-1}u =
(1-u^TA^{-1}u)^{-1}$,}
  c_{n,ii} & = n^{-1} x_i^T (\rho_\nu - S_{ni})^{-1} x_i.
\end{align*}
The event $E_{n\delta} = \{\mu_1 \leq  b_\gamma+\delta \} = \{ \|S_n\|, \|S_{ni} \|  \leq b_\gamma+\delta, i= 1, \ldots, n \}$
(since $\| S_{ni} \| \leq \|S_n\|$)
is symmetric with respect to permutations of the columns of $X_2$ and
is of high probability: $\mP(E_{n\delta})\to 1$. 

Apply Lemma \ref{lem:quad-form-as} 
with $X_1 = x_i$ and 
$B_n = (\rhonn - S_{ni})^{-1}$. 
We have
\begin{align*}
  \aslim c_{n,ii}
    & = \aslim n^{-1} \tr B_n(\rhonn) 
  = \gamma \int (\rho_\nu-x)^{-1} F_\gamma(dx)  \\
    & = -\ssm(\rho_\nu) - \frac{1-\gamma}{\rho_\nu}
      = 1 + \frac{1}{\rho_\nu \ssm(\rho_\nu)}.
\end{align*}
The second equality follows from the analog of \eqref{eq:as-fnal} for
$F_n$ and $F_\gamma$, in the same manner as for \eqref{eq:lln}.
The third equality uses the
companion relation \eqref{eq:companion}, and
the final equality rewrites the Marchenko-Pastur equation \eqref{eq:MPE}.

Thus $\plim b_{n,ii} = - \rho_\nu \ssm(\rho_\nu) = \rho_\nu/\ell_\nu$, and
$\plim b_{n,ii}^2 = \rho_\nu^2/\ell_\nu^2 = b^2$, say.
Since the event $E_{n\delta}$ is invariant to permutation of columns
of $X_2$, writing $\E_n$ for expectation conditional on $E_{n\delta}$ we have
\begin{equation*}
  \E_n \Big|n^{-1}\sum_{i=1}^nb_{n,ii}^2-b^2\Big|
  \leq n^{-1} \sum_{i=1}^n \E_n |b_{n,ii}^2-b^2|
  = \E_n |b_{n,11}^2-b^2| \to 0
\end{equation*}
because $|b_{n,11}| = |\rhonn [(\rhonn-C_n)^{-1}]_{11}|
\leq C(\rho_\nu,\delta)$ on event $E_{n\delta}$.
Thus $\plim n^{-1} \sum_i b_{n,ii}^2 = b^2$ and the proof is done.
\end{proof}



\section*{Appendix B: Bai-Yao CLT for real valued data}
\label{sec:appendix-b:-bai}

We verify that the martingale method of Baik-Silverstein, [BS] below,
presented in the Appendix of \cite{cdf09}, extends to
establish the Bai-Yao CLT, 
for vectors of random symmetric bilinear forms,
Theorem 7.1 of \cite{bayo08},
under a spectral norm bound condition. We focus on the version for
real-valued data.

\bigskip

Consider a sequence of zero-mean vectors 
$(x_i,y_i) $ i.i.d. as $(x,y) \in \mR^L \times \mR^L$ with 
\begin{equation*}
  \text{Cov}
  \begin{pmatrix}
    x \\ y
  \end{pmatrix}
  = \Gamma = 
  \begin{pmatrix}
    \Gamma^{xx} & \Gamma^{xy} \\
    \Gamma^{yx} & \Gamma^{yy} 
  \end{pmatrix},
  \qquad \quad
  \rho_\ell = \Gamma^{xy}_{\ell \ell} = \E \,x_{\ell} y_{\ell}.
\end{equation*}
Let $J = \Gamma^{xx} \circ \Gamma^{yy} + \Gamma^{xy} \circ
\Gamma^{yx}$, that is
\begin{equation} \label{eq:Jdef}
  J_{\ell \ell'} = \E (x_{\ell} x_{\ell'}) \E (y_{\ell}y_{\ell'})
                 + \E (x_{\ell}y_{\ell'}) \E (y_{\ell}x_{\ell'}),
\end{equation}
and let $K = (K_{\ell \ell'})$ be the  partial
cumulant matrix 
\begin{equation} \label{eq:Kellpdef}
  K_{\ell \ell'} = \E( x_{\ell} y_{\ell} x_{\ell'} y_{\ell'}) 
       - \E (x_{\ell} y_{\ell}) \E (x_{\ell'}y_{\ell'})
       - \E (x_{\ell} y_{\ell'}) \E (x_{\ell'}y_{\ell})
       - \E (x_{\ell} x_{\ell'}) \E (y_{\ell}y_{\ell'}).
     \end{equation}

We introduce a notation for rows $X_\ell^T$ and $Y_\ell^T$ of data
matrices based on $n$ observations
\begin{equation*}
  (x_{\ell i})_{L \times n} =
  \begin{bmatrix}
    X_1^T \\ \vdots \\ X_L^T
  \end{bmatrix}, \qquad
  (y_{\ell i})_{L \times n} =
  \begin{bmatrix}
    Y_1^T \\ \vdots \\ Y_L^T
  \end{bmatrix},
\end{equation*}

\begin{theorem}
\label{th:bybs}
  Let $B_n = (b_{n,ij})$ be random symmetric $n \times n$ matrices,
  independent of $\{(x_i,y_i), i \in \mN\}$, such that $\|B_n\| \leq
  a$ for all $n$, and
  \begin{equation*}
    \omega  = \plim n^{-1} \sum_{i=1}^n b_{n,ii}^2, \qquad
    \theta  = \plim n^{-1} \tr B_n^2.
  \end{equation*}
Let $Z_n \in \mR^L$ have components
\begin{equation*}
  Z_{n,\ell} = n^{-1/2} [X_\ell^T B_n Y_\ell - \rho_\ell \tr B_n].
\end{equation*}
Then 
\begin{equation} \label{eq:eqD}
  Z_n \stackrel{\mathcal{D}}{\to} Z \sim N_L(0,D), \quad  \text{with}
  \quad
  D = \theta  J + \omega  K.
\end{equation}
\end{theorem}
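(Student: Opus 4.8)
The plan is to reduce to a scalar statement by the Cramér--Wold device and then run the Baik--Silverstein martingale argument. Fix $c \in \mR^L$; it suffices to show $c^T Z_n \toodist N(0, c^T D c)$. Writing $w_i = (x_i, y_i)$ and $g(w_i,w_j) = \sum_\ell c_\ell x_{\ell i} y_{\ell j}$, we have $n^{1/2} c^T Z_n = \sum_{i,j} b_{n,ij}\, g(w_i,w_j) - (\sum_\ell c_\ell \rho_\ell)\tr B_n$. I would use the filtration $\mathcal F_k = \sigma(B_n; w_1,\dots,w_k)$ (so $\mathcal F_0 = \sigma(B_n)$) and put $\E_k = \E[\,\cdot\mid\mathcal F_k\,]$. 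Because $\E[g(w_i,w_j)] = 0$ for $i\neq j$ and $\E[g(w_i,w_i)] = \sum_\ell c_\ell\rho_\ell$, the $\mathcal F_0$-conditional mean of $\sum_{i,j}b_{n,ij}g(w_i,w_j)$ is $(\sum_\ell c_\ell\rho_\ell)\tr B_n$, so $n^{1/2}c^T Z_n$ already equals its own centering and telescopes into martingale differences $\Delta_{nk} = (\E_k-\E_{k-1})\sum_{i,j}b_{n,ij}g(w_i,w_j)$. Only the terms containing $w_k$ survive, and since $\E[g(w_k,w_j)+g(w_j,w_k)\mid\mathcal F_{k-1}]=0$ for $j<k$, a short computation gives $\Delta_{nk} = b_{n,kk}\,h(w_k) + \sum_{j<k}b_{n,kj}\,\psi(w_k,w_j)$, where $h(w) = \sum_\ell c_\ell(x_\ell y_\ell - \rho_\ell)$ and $\psi(w,w') = \sum_\ell c_\ell(x_\ell y'_\ell + x'_\ell y_\ell)$. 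Thus $c^T Z_n = \sum_{k=1}^n \xi_{nk}$ with $\xi_{nk} = n^{-1/2}\Delta_{nk}$, a martingale difference array, and the goal becomes verifying the hypotheses of a martingale CLT (e.g.\ \citet{Bill68}).

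The key step is the conditional variance. Expanding $\E_{k-1}[\Delta_{nk}^2]$ produces a diagonal term $b_{n,kk}^2\,\E[h(w)^2]$, a cross term linear in the past vectors $w_j$, and an off-diagonal term $\sum_{j,j'<k}b_{n,kj}b_{n,kj'}\,\E_{k-1}[\psi(w_k,w_j)\psi(w_k,w_{j'})]$. A direct second-moment calculation from the definitions of $J$ and $K$ gives $\E[h(w)^2] = c^T(J+K)c$, and for $j=j'$, $\E[\psi(w,w')^2] = 2\,c^T J c$. Summing over $k$, dividing by $n$, and estimating the cross term and the $j\neq j'$ part with the bounds $\sum_j b_{n,kj}^2 = (B_n^2)_{kk} \le a^2$ and $\max_{k,j}|b_{n,kj}| \le a$ (which also controls replacing the $w_j$-quadratic forms in the $j=j'$ part by their means) leaves the skeleton
\begin{equation*}
\begin{split}
  n^{-1}\sum_{k=1}^n \E_{k-1}[\Delta_{nk}^2]
  &= c^T(J+K)c \cdot n^{-1}\sum_k b_{n,kk}^2 \\
  &\quad + 2\,c^T J c\cdot n^{-1}\sum_k\sum_{j<k}b_{n,kj}^2 + o_p(1).
\end{split}
\end{equation*}
The combinatorial identity $\sum_k\sum_{j<k}b_{n,kj}^2 = \tfrac12(\tr B_n^2 - \sum_k b_{n,kk}^2)$ together with the assumptions $n^{-1}\sum_k b_{n,kk}^2 \toprob \omega$ and $n^{-1}\tr B_n^2 \toprob \theta$ then yields $\sum_k\E_{k-1}[\xi_{nk}^2] \toprob \omega\,c^T(J+K)c + (\theta-\omega)\,c^T J c = c^T(\theta J + \omega K)c = c^T D c$. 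This is exactly how $\theta$ and $\omega$ enter: $\theta$ rides on the off-diagonal mass of $B_n$ and pairs with $J$, while $\omega$ rides on the diagonal and pairs with $K$ (plus a compensating piece of $J$).

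The remaining ingredient is the Lindeberg condition $\sum_k \E[\xi_{nk}^2\,\mathbbm 1\{|\xi_{nk}|>\epsilon\}] \to 0$. The off-diagonal contribution $n^{-1/2}\sum_{j<k}b_{n,kj}\psi(w_k,w_j)$ has fourth moment bounded uniformly in $n,k$ --- by a Rosenthal/Marcinkiewicz--Zygmund bound after conditioning on $w_k$, using $\sum_j b_{n,kj}^2 \le a^2$, $\max_j b_{n,kj}^2 \le a^2$ and $\E\|w\|^4<\infty$ --- so its contribution to the Lindeberg sum is $O(n^{-1})$. The diagonal contribution $n^{-1/2}\sum_k b_{n,kk}h(w_k)$ is a weighted sum of i.i.d.\ mean-zero variables with $\E h(w)^2<\infty$ and uniformly bounded weights, for which the Lindeberg--Feller condition holds. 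I expect the main (though essentially routine) obstacle here to be the truncation bookkeeping: since only four moments of $(x,y)$ are assumed, one follows the martingale treatment in \citet{cdf09} and truncates/recenters the entries of the $w_i$ at a level $\epsilon_n n^{1/2}$ with $\epsilon_n\downarrow 0$, using $\E[\|w\|^2\mathbbm 1\{\|w\|>\epsilon_n n^{1/2}\}] = o(1)$ to see the change in $c^T Z_n$ is $o_p(1)$ and that $\Gamma,J,K,\theta,\omega$ are unchanged in the limit; after truncation all moments are finite and a Lyapunov bound closes the argument. One must also check that every place where \citet{cdf09} use the resolvent structure of their $B_n$ can be rerun from the single bound $\|B_n\|\le a$ (e.g.\ $\tr B_n^q \le n a^q$, $(B_n^2)_{kk}\le a^2$). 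Assembling these pieces, the martingale CLT gives $c^T Z_n \toodist N(0,c^T D c)$, and Cramér--Wold gives $Z_n \toodist N_L(0,D)$.
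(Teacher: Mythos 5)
Your overall architecture is exactly the paper's: Cram\'er--Wold, the same martingale-difference decomposition (your $b_{n,kk}h(w_k)+\sum_{j<k}b_{n,kj}\psi(w_k,w_j)$ is the paper's $Z_{di}+Z_{yi}+Z_{xi}$ from \eqref{eq:single-decomp}), convergence of the conditional quadratic variation to $c^TDc$, and a Lindeberg check; your identification $\E h(w)^2=c^T(J+K)c$, $\E\psi(w,w')^2=2c^TJc$ and the bookkeeping identity $\sum_{k}\sum_{j<k}b_{n,kj}^2=\tfrac12(\tr B_n^2-\sum_k b_{n,kk}^2)$ reproduce the paper's limit $v^2=\theta\,c^TJc+\omega\,c^TKc$ correctly.

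The gap is in the step you dispose of in one clause: that the cross term (diagonal $\times$ off-diagonal) and the $j\neq j'$ part of the off-diagonal sum in $n^{-1}\sum_k\E_{k-1}[\Delta_{nk}^2]$ are $o_p(1)$ ``with the bounds $\sum_j b_{n,kj}^2\le a^2$ and $\max_{k,j}|b_{n,kj}|\le a$.'' Those bounds do not suffice. Writing $B_L$ for the strictly lower-triangular part of $B_n$, the cross term is (up to constants) $n^{-1}\sum_i b_{n,ii}S_i(\cdot)$ and the $j\neq j'$ part is the off-diagonal piece of $n^{-1}\,\bx^T B_L^TB_L\,\by$; the natural second-moment estimates require $n^{-2}\,b^TB_LB_L^Tb\to0$ (with $b=(b_{n,ii})$) and $n^{-2}\tr\bigl((B_L^TB_L)^2\bigr)\to0$, and from $\|B_n\|\le a$ alone one only gets the trivial bounds $n^2a^4$, i.e.\ $O_p(1)$ rather than $o_p(1)$, because the operator norm of the triangular truncation $B_L$ is \emph{not} controlled by $\|B_n\|$. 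This is precisely where the paper invokes the nontrivial inequality $\|B_L\|\le c\log n\,\|B_n\|$ of \cite{math93} inside Lemma \ref{lem:cge} (via Lemma \ref{lem:bounds}(b) applied to $\check B_n=B_L^TB_L$, giving $\E\tr\check B_n^2=O(n\log^4 n)=o(n^2)$, and the bound $b^TB_LB_L^Tb\le n\|b\|_\infty^2\|B_LB_L^T\|$ for $T_n$). Note the spectral-norm hypothesis is doing real work here (see the Onatski counterexample following the theorem), so some such tool is genuinely needed; you should either import the Mathias bound or supply an alternative argument for these two terms. Your remaining steps --- the weighted LLN for the $j=j'$ part (bounded weights $(B_L^TB_L)_{jj}\le a^2$, i.i.d.\ finite-variance summands) and the Lindeberg/Lyapunov verification via conditional fourth moments --- are fine and essentially match the paper; the extra truncation of the data at level $\epsilon_n n^{1/2}$ is not needed (the paper verifies Lindeberg directly under fourth moments), though it is harmless.
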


\textit{Remark.} Alexei Onatski (personal communication) has noted
that an extra condition, such as the norm bound above, is needed in
the Bai-Yao result. Indeed, let 
$e_n$ denote the vector of ones normalized to unit length, and set
$B_n = \sqrt{n} e_n e_n^T$. Then $\omega = 0$ and $\theta =
1$.  However, with $L=1$ and $x_i=y_i$ i.i.d. standard normal, we have 
$Z_n = (n^{-1/2}\sum_1^n x_i)^2-1 \equiv \chi_1^2 - 1$ for all $n$.
Note that  $\| B_n \| = \sqrt n$.
Inspection of the proof below suggests that the result continues to hold
if $\|B_n\| = O_p(n^{1/4-\epsilon})$.

\bigskip
\textit{Details.} \ 
We use the Cramer-Wold device and show for each $c \in \mR^L$ that
$c^T Z_n \stackrel{\mathcal{D}}{\to} N(0,c^TDc)$.
The modifications to the Baik-Silverstein argument to deal with linear
combinations $c^T Z_n$ and symmetric bilinear forms are essentially
just notational, but they are nevertheless set out below.

Start with a single bilinear form $X^TBY = \sum_{i,j=1}^n x_i b_{ij} y_j$
built from i.i.d. zero mean vectors $(x_i,y_i) \in \mR^2$ with
\begin{equation*}
  \text{Cov}
  \begin{pmatrix}
    x_1 \\ y_1
  \end{pmatrix}
  =
  \begin{pmatrix}
    \gamma^{xx} & \rho \\
    \rho        & \gamma^{yy}
  \end{pmatrix},
  \qquad \quad 
  \kappa = \E (x_1y_1-\rho)^2,
\end{equation*}
so that the centering term
$\E X^TBY = \rho \tr B.$
The symmetry of $B$ allows a decomposition
\begin{equation}   \label{eq:single-decomp}
  X^T B Y - \rho \tr B 
     = \sum_{i=1}^n (x_i y_i - \rho)b_{ii} + x_i S_i(y) + y_i S_i(x),
\end{equation}
where we set
\begin{equation*}
  S_i(y) = \sum_{j=1}^{i-1} b_{ij} y_j.
\end{equation*}

\begin{lemma}
  \label{lem:bounds}
(a) If $B$ is a symmetric matrix, then
\begin{equation}  \label{eq:varbd}
  \E [X^T B Y - \rho \tr B]^2 
     \leq C \tr B^2,
\end{equation}
where $C = 3 \max(\kappa,\gamma^{xx}\gamma^{yy})$.

(b) If $B = B_n$ is a random symmetric matrix independent of $\{(x_i,y_i),
i \in \mN\}$, and $\E \tr (B_n^2) = o(n^{2})$, then
\begin{equation*}
  \plim n^{-1} X^T B_n Y 
    = \rho \plim n^{-1} \tr B_n.
\end{equation*}
If event $E_n$ is $B_n$-measurable and $\mP(E_n) \to 1$, then the
condition $\E[\tr(B_n^2),E_n] = o(n^2)$ suffices. 
\end{lemma}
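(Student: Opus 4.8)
The plan is to derive (a) from the martingale decomposition \eqref{eq:single-decomp} and then reduce (b) to (a) by conditioning on $B_n$.

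\textbf{Part (a).} Set $\mathcal{F}_i = \sigma\{(x_1,y_1),\dots,(x_i,y_i)\}$ and, following \eqref{eq:single-decomp} (which uses the symmetry of $B$), write $X^TBY-\rho\tr B = \sum_{i=1}^n D_i$ with $D_i = (x_iy_i-\rho)b_{ii} + x_iS_i(y) + y_iS_i(x)$. Since $S_i(x)$ and $S_i(y)$ are $\mathcal{F}_{i-1}$-measurable while $(x_i,y_i)$ is independent of $\mathcal{F}_{i-1}$ with $\E x_i = \E y_i = 0$ and $\E x_iy_i = \rho$, each $D_i$ satisfies $\E[D_i\mid\mathcal{F}_{i-1}] = 0$, so $\{D_i\}$ is a martingale difference array, the $D_i$ are pairwise orthogonal in $L^2$, and $\E(X^TBY-\rho\tr B)^2 = \sum_{i=1}^n\E D_i^2$. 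Next bound $D_i^2 \le 3\big[(x_iy_i-\rho)^2b_{ii}^2 + x_i^2S_i(y)^2 + y_i^2S_i(x)^2\big]$ pointwise; taking expectations and using the independence of $(x_i,y_i)$ from $\mathcal{F}_{i-1}$ together with $\E y_jy_k = \gamma^{yy}$ when $j=k$ and $0$ otherwise (and likewise for the $x$'s) gives $\E D_i^2 \le 3\kappa b_{ii}^2 + 6\gamma^{xx}\gamma^{yy}\sum_{j<i}b_{ij}^2$. Summing over $i$ and using $\sum_i\sum_{j<i}b_{ij}^2 = \hf(\tr B^2 - \sum_ib_{ii}^2)$ for symmetric $B$, the right side collapses to $3\gamma^{xx}\gamma^{yy}\tr B^2 + 3(\kappa-\gamma^{xx}\gamma^{yy})\sum_ib_{ii}^2$; since $0\le\sum_ib_{ii}^2\le\tr B^2$, this is at most $3\max(\kappa,\gamma^{xx}\gamma^{yy})\tr B^2$, which is \eqref{eq:varbd}.

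\textbf{Part (b).} Because $B_n$ is independent of $\{(x_i,y_i)\}$, apply (a) conditionally on $B_n$ to get $\E[(X^TB_nY-\rho\tr B_n)^2\mid B_n] \le C\tr B_n^2$ with $C = 3\max(\kappa,\gamma^{xx}\gamma^{yy})$ a fixed constant; taking expectations and using $\E\tr B_n^2 = o(n^2)$ gives $\E(X^TB_nY-\rho\tr B_n)^2 = o(n^2)$, so $n^{-1}(X^TB_nY-\rho\tr B_n)\to 0$ in $L^2$ and hence in probability. Slutsky's theorem then yields $\plim n^{-1}X^TB_nY = \rho\,\plim n^{-1}\tr B_n$ whenever the latter limit exists. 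For the conditional refinement, put $\tilde B_n = \mathbbm{1}(E_n)B_n$, which is symmetric and, since $E_n$ is $B_n$-measurable, still independent of the data; moreover $\tr\tilde B_n^2 = \mathbbm{1}(E_n)\tr B_n^2$, so $\E\tr\tilde B_n^2 = \E[\tr B_n^2, E_n] = o(n^2)$ and the previous step applies to $\tilde B_n$. Since $X^T\tilde B_nY - \rho\tr\tilde B_n$ coincides with $X^TB_nY - \rho\tr B_n$ on $E_n$ and $\mP(E_n)\to 1$, the probability limit transfers back to $B_n$.

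\textbf{Main obstacle.} The only substantive step is the variance bookkeeping in (a): confirming the martingale-difference structure so that the cross terms $\E[D_iD_j]$ drop out, and then tracking the diagonal versus off-diagonal contributions to $\tr B^2$ carefully enough that the constant comes out as $3\max(\kappa,\gamma^{xx}\gamma^{yy})$ rather than a larger multiple. Everything in (b) is then routine conditioning plus Slutsky, with the indicator truncation reducing the conditional claim to the unconditional one.
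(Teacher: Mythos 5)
Your proof is correct and follows essentially the same route as the paper: the decomposition \eqref{eq:single-decomp}, orthogonality coming from independence (you via the martingale-difference structure of the per-$i$ terms, the paper via grouping into the three sums first), the same constant $3\max(\kappa,\gamma^{xx}\gamma^{yy})$, and conditioning on $B_n$ for part (b). Your explicit indicator-truncation argument for the $E_n$ refinement correctly fills in a step the paper only asserts.
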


\begin{proof}
  (a). Using the decomposition \eqref{eq:single-decomp}, the left side
  of \eqref{eq:varbd} is bounded by
  \begin{equation}
    \label{eq:first}
    3 \sum_i \E (x_i y_i - \rho)^2 b_{ii}^2 
       + \E x_i^2 S_i^2(y) + \E y_i^2 S_i^2(x).
  \end{equation}
Using independence of $y_i$, we calculate
$\E S_i^2(y) = \gamma^{yy} \sum_{j=1}^{i-1} b_{ij}^2$.
and then by independence of $x_i$ from $S_i(y)$ we obtain
\begin{equation*}
  \sum_i \E x_i^2 S_i^2(y) = \gamma^{xx} \gamma^{yy} \sum_{i>j}
  b_{ij}^2.
\end{equation*}
The third term of \eqref{eq:first} is handled similarly, so that
\eqref{eq:first} is bounded by
\begin{equation*}
  3 \kappa \sum_i b_{ii}^2 + 3 \gamma^{xx} \gamma^{yy} \sum_{i\neq j}
  b_{ij}^2 
     \leq  C \tr (B^2).
\end{equation*}

(b) Conditioning on $B_n$ is harmless due to the assumed independence,
so part (a) yields
\begin{equation*}
  \E [n^{-1} X^T B_n Y - \rho n^{-1} \tr B_n]^2
      \leq C n^{-2} \E \tr B_n^2 
      \to 0,
\end{equation*}
which suffices for our conclusion.
\end{proof}

\begin{lemma}
  \label{lem:cge}
In this setting, i.e. with $X = (x_i), Y =(y_i), \rho = \E x_1 y_1$
and $B = B_n$ as in Theorem \ref{th:bybs}, we have
\begin{align}
  T_n(Y;B) 
    & = n^{-1} \sum_{i=1}^n b_{ii} S_i(y) \stackrel{p}{\to} 0  
        \label{eq:T-lim} \\
  Q_n(X,Y;B) 
    & = n^{-1} \sum_{i=1}^n S_i(x) S_i(y) \stackrel{p}{\to} 
        \hf \rho (\theta-\omega). \label{eq:Q-lim}
\end{align}
\end{lemma}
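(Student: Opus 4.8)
The plan is to treat each limit as a degenerate martingale-difference sum and apply a second-moment (variance) bound. Throughout I condition on $B_n$, which is legitimate because $B_n$ is independent of $\{(x_i,y_i)\}$ and all the limits are in probability; to control the randomness of $B_n$ I work on the high-probability event $E_n = \{\|B_n\| \le a\}$ (or intersect with any convenient $B_n$-measurable set of probability tending to one), exactly as in Lemma~\ref{lem:bounds}(b). For $T_n(Y;B)$, note that each summand $b_{ii} S_i(y) = b_{ii}\sum_{j<i} b_{ij} y_j$ has conditional mean zero (the $y_j$ are zero mean and independent of the coefficients $b_{ii}b_{ij}$ for $j<i$). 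The terms are uncorrelated given $B_n$: for $i \ne i'$, say $i < i'$, the inner sum $S_i(y)$ involves only $y_j$ with $j<i<i'$, while $S_{i'}(y)$ contains $y_i$ and other indices, and conditioning on $B_n$ and on $y_1,\dots,y_{i'-1}$ kills the cross term. Hence
\begin{equation*}
  \E[T_n(Y;B)^2 \mid B_n]
  = n^{-2} \sum_{i=1}^n b_{ii}^2 \, \E[S_i(y)^2 \mid B_n]
  = n^{-2} \gamma^{yy} \sum_{i=1}^n b_{ii}^2 \sum_{j<i} b_{ij}^2
  \le n^{-2} \gamma^{yy} \, a^2 \, \tr B_n^2
  \le \gamma^{yy} a^4 n^{-1},
\end{equation*}
using $|b_{ii}| \le \|B_n\| \le a$, $\sum_{j<i} b_{ij}^2 \le (B_n^2)_{ii}$, and $\tr B_n^2 \le n a^2$. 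This vanishes, giving \eqref{eq:T-lim}.

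For $Q_n(X,Y;B)$, first compute the conditional mean. Since $S_i(x) = \sum_{j<i} b_{ij} x_j$ and $S_i(y) = \sum_{k<i} b_{ik} y_k$ are independent of each other only partially — the pair $(x_j,y_j)$ is correlated — we get $\E[S_i(x)S_i(y) \mid B_n] = \rho \sum_{j<i} b_{ij}^2$. Summing,
\begin{equation*}
  \E[Q_n \mid B_n]
  = n^{-1} \rho \sum_{i=1}^n \sum_{j<i} b_{ij}^2
  = n^{-1} \rho \cdot \tfrac12\Big(\sum_{i,j} b_{ij}^2 - \sum_i b_{ii}^2\Big)
  = \tfrac12 \rho \big(n^{-1}\tr B_n^2 - n^{-1}\textstyle\sum_i b_{ii}^2\big)
  \toprob \tfrac12 \rho(\theta - \omega),
\end{equation*}
by the hypotheses $\plim n^{-1}\tr B_n^2 = \theta$ and $\plim n^{-1}\sum_i b_{ii}^2 = \omega$. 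It then remains to show the fluctuation $Q_n - \E[Q_n \mid B_n] \toprob 0$, which I would do by bounding the conditional variance. Writing $W_i := S_i(x)S_i(y) - \rho\sum_{j<i}b_{ij}^2$, the $W_i$ form a martingale-difference array with respect to the filtration generated by $B_n$ and $(x_1,y_1),\dots,(x_i,y_i)$, so $\E[(Q_n - \E[Q_n\mid B_n])^2 \mid B_n] = n^{-2}\sum_i \E[W_i^2 \mid B_n]$. Each $\E[W_i^2\mid B_n]$ is a quadratic expression in the entries $\{b_{ij}\}_{j<i}$ with coefficients bounded by fourth moments of $(x,y)$; expanding, the dominant piece is of order $(\sum_{j<i} b_{ij}^2)^2 \le a^2 (B_n^2)_{ii}$ plus a diagonal piece $\sum_{j<i} b_{ij}^4 \le a^2 (B_n^2)_{ii}$, so $\sum_i \E[W_i^2\mid B_n] \le C a^2 \tr B_n^2 \le C a^4 n$, whence the conditional variance is $O(n^{-1}) \to 0$. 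Combining the mean and variance bounds via Chebyshev (conditionally) and then removing the conditioning by dominated convergence on $E_n$ gives \eqref{eq:Q-lim}.

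The main obstacle is the fluctuation bound for $Q_n$: one must carefully expand $\E[(S_i(x)S_i(y))^2 \mid B_n]$, which involves fourth moments of the $(x,y)$ pairs and sums such as $\sum_{j,k<i} b_{ij}^2 b_{ik}^2$ and $\sum_{j<i} b_{ij}^4$, and verify that every resulting term is controlled by $a^2 \tr B_n^2$ (using $\|B_n\| \le a$ to pull out norms and the finiteness of fourth moments guaranteed by Model~M / the hypotheses of Theorem~\ref{th:bybs}). This is the bookkeeping-heavy step; the martingale structure itself is straightforward and the $T_n$ limit is essentially immediate from the variance computation above.
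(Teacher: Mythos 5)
There is a genuine gap, and it occurs in both halves of your argument: the claimed martingale-difference / uncorrelatedness structure is false. Each summand $b_{ii}S_i(y)$ (and likewise $W_i = S_i(x)S_i(y)-\rho\sum_{j<i}b_{ij}^2$) involves only variables with index strictly less than $i$, so it is \emph{measurable} with respect to the $\sigma$-field generated by $B_n$ and $(x_1,y_1),\dots,(x_{i-1},y_{i-1})$ — it is predictable, not a martingale difference. Consequently the cross terms do not vanish: for $i<i'$,
\begin{equation*}
  \E\bigl[S_i(y)S_{i'}(y)\mid B_n\bigr] \;=\; \gamma^{yy}\sum_{j<i} b_{ij}b_{i'j} \;=\; \gamma^{yy}\,(B_LB_L^T)_{ii'},
\end{equation*}
where $B_L$ is the strictly lower triangular truncation of $B_n$, and this is nonzero in general (e.g.\ $B_n=\mathbf{1}\mathbf{1}^T/n$). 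Your displayed bound for $\E[T_n^2\mid B_n]$ keeps only the diagonal $i=i'$ terms, and the analogous claim $\sum_i\E[W_i^2\mid B_n]\le Ca^2\tr B_n^2$ for $Q_n$ again ignores cross terms of the form $(B_LB_L^T)_{ii'}^2$. These cannot be waved away: the full quantity is $n^{-2}b^TB_LB_L^Tb$ (with $b=(b_{ii})$), and the only bound available from $\|B_n\|\le a$ alone is $\|B_L\|\le\|B_L\|_F\le\sqrt{n}\,a$, which yields $O(1)$ rather than $o(1)$ for both variances. So as written the proof of \eqref{eq:T-lim} and of the fluctuation step for \eqref{eq:Q-lim} does not go through.

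The missing ingredient is control of the operator norm of the triangular truncation: the paper invokes the bound $\|B_L\|\le c\,\log n\,\|B_n\|$ of \cite{math93}, giving $\E[T_n^2\mid B_n]\le \gamma^{yy}n^{-2}\,n\|b\|_\infty^2\|B_L\|^2 = O(n^{-1}\log^2 n)$, and for \eqref{eq:Q-lim} it writes $Q_n = n^{-1}X^TB_L^TB_LY$ and applies Lemma \ref{lem:bounds}(b) to the symmetric matrix $B_L^TB_L$, using $\tr(B_L^TB_L)^2\le c^4a^4\,n\log^4 n = o(n^2)$ together with the identity $\tr B_L^TB_L = \hf[\tr B_n^2-\sum_i b_{ii}^2]$, whose limit $\hf(\theta-\omega)$ is exactly the conditional mean you computed (that part of your argument is correct and coincides with the paper's). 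To repair your proof you must either import such a $\log n$ bound on $\|B_L\|$ or otherwise genuinely estimate the off-diagonal sums $\sum_{i\ne i'}b_{ii}b_{i'i'}(B_LB_L^T)_{ii'}$ and $\sum_{i\ne i'}(B_LB_L^T)_{ii'}^2$; a second-moment computation restricted to the diagonal does not suffice.
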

\begin{proof}
To establish \eqref{eq:T-lim}, begin by writing
\begin{equation*}
  S_i(y) = \sum_{j=1}^n \check b_{ij} y_j = (B_LY)_i,
\end{equation*}
where the lower triangular matrix $B_L = (\check b_{ij})$ is defined
by $\check b_{ij} = b_{ij}$ if $i>j$ and $0$ otherwise.
We have
\begin{equation*}
  \E [S_i(y) S_{i'}(y) | B]
   = \sum_{j,j'=1}^n \check b_{ij} \check b_{i'j'} \E y_j y_{j'}
   = \sigma_y^2 \sum_{j=1}^n \check b_{ij} \check b_{i'j}
   = \sigma_y^2 (B_L B_L^T)_{ii'},
\end{equation*}
where $\sigma_y^2 = \E y_1^2$. Consequently,
\begin{equation*}
  \E [T_n^2(Y,B) | B]
    = \sigma_y^2 n^{-2} \sum_{i,i'=1}^n b_{ii} b_{i'i'} (B_L B_L^T)_{ii'}.
\end{equation*}
Let $b = (b_{ii}) \in \mR^n$. The double sum may be bounded via
\begin{equation*}
  b^T B_L B_L^T b 
    \leq n \| b \|_\infty^2 \| B_L B_L^T \|
    \leq (n \log^2 n) c^2 a^4,
\end{equation*}
where we used the bound of \cite{math93} $\| B_L \| \leq c \log n \|B \|$, $ \| b \|_{\infty} \leq \| B \|$
and the operator norm bound on $B$. 
Hence $\E T_n^2(Y;B) = O(n^{-1} \log^2 n)$ and so \eqref{eq:T-lim}
follows. 

To establish \eqref{eq:Q-lim}, write 
$Q_n(X,Y;B) = n^{-1} X^TB_L^TB_LY$
and apply Lemma \ref{lem:bounds}(b) with $\check{B}_n =B_L^TB_L$.
We have
\begin{equation*}
 \tr B_L^TB_L = \sum_{i>j} b_{ij}^2 = \hf \left[ \tr B^2 - \sum_i
    b_{ii}^2 \right], 
\end{equation*}
and since $\tr (B_L^TB_L)^2 \leq n \|B_L^T B_L\|^2 \leq c^4 a^4n\log^4n$,
Lemma \ref{lem:bounds}(b) implies that
\begin{equation*}
  \plim Q_n =  \plim \rho \, n^{-1}\tr B_L^T B_L
      = \hf \rho(\theta-\omega).  \qedhere
\end{equation*}
\end{proof}


Now apply all this in the setting of the theorem. 
Let $\mathcal{F}_{n,i}$ be the $\sigma$-field generated by 
$B_n$ and 
$\{ (x_j,y_j), 1 \leq j \leq i\}$, and
$\E_{i-1}$ denote conditional expectation
w.r.t. $\mathcal{F}_{n,i-1}$. 
We use the decomposition \eqref{eq:single-decomp} to obtain
\begin{align}
  c^T Z_n
    & = n^{-1/2} \sum_\ell c_\ell [X_\ell^T B_n Y_\ell - \rho_\ell \tr
      B_n ] \notag \\
    & = \sum_{i=1}^n Z_{di} + Z_{yi} + Z_{xi} 
      = \sum_{i=1}^n Z_{ni},  \label{eq:decomp}
\end{align}
where for $a \in \{d,y,x\}$ we define martingale differences
\begin{equation*}
  Z_{ai} = n^{-1/2} \sum_\ell c_\ell w_{a \ell i} v_{a \ell i},
\end{equation*}
and the terms $v_{a \ell i} \in \mathcal{F}_{n,i-1}$ and
$w_{a \ell i}$ are given by
\begin{equation*}
\begin{array}[h]{ccc}
\qquad  a \qquad  & \qquad  w_{a \ell i} \qquad & 
     \qquad v_{a \ell i}
     \qquad  \\
  \hline
  d & x_{\ell i} y_{\ell i} - \rho_\ell & b_{ii}  \\
  y & x_{\ell i} & S_i(y_\ell) \\                                         
  x & y_{\ell i} & S_i(x_\ell) \\                                         
\end{array}
\end{equation*}

We apply the martingale CLT as in [BS], so we need to check a
Lindeberg condition and show that the quadratic variation converges. 
We do the second step first. 
The limiting variance in the central limit theorem may be found
from $v^2 = \plim V_n^2$, where 
\begin{equation}  \label{eq:vdecomp}
  V_n^2 = \sum_{i=1}^n \E_{i-1} [Z_{ni}^2]
        = V_{n,dd} + 2(V_{n,dy} + V_{n,dx}) + V_{n,yy} + V_{n,xx} 
           +2 V_{n,xy},
\end{equation}
where
we have set
\begin{equation*}
  V_{n,ab} = \sum_{i=1}^n \E_{i-1} [Z_{ai} Z_{bi}]
        = \sum_{\ell,\ell'} c_\ell c_{\ell'} \E[w_{a \ell} w_{b
    \ell'}] \cdot
    n^{-1} \sum_{i=1}^n v_{a \ell i} v_{b \ell' i},
\end{equation*}
since the distribution of $w_{a \ell i}$ is independent of $i$.
The values of the two terms in the product as $a, b$ vary in
$\{d,y,x\}$ are given in the following table
\begin{equation*}
  \begin{array}[h]{ccccc}
    & \E[w_{a \ell} w_{b \ell'}] & n^{-1} \sum_{i=1}^n v_{a \ell i}
                                   v_{b \ell' i} & & \\
    \hline
    a,b = d & M_{\ell \ell'} & n^{-1} \sum_{i=1}^n b_{ii}^2 &
                                                              \stackrel{p}{\to}
                                                   & \omega \\
    a=d,\ \ b\in \{x,y\} & M^{\check b}_{\ell \ell'}
                        & T_n(X_\ell^b,B) & \stackrel{p}{\to}
                                                 & 0 \\
    a, b\in \{x,y\} & \Gamma^{\check a \check b}_{\ell \ell'}
        & Q_n(X_\ell^a,X_{\ell'}^b,B) & \stackrel{p}{\to} & \hf (\theta-\omega)
                                              \Gamma^{ab}_{\ell \ell'}
  \end{array}
\end{equation*}
In this table, we are using the notations
\begin{align*}
  M_{\ell \ell'} & = \E (x_{\ell} y_{\ell} - \rho_{\ell}) (x_{\ell'}
                   y_{\ell'} - \rho_{\ell'}) \\
  M^x_{\ell \ell'} & = \E (x_{\ell} y_{\ell} - \rho_{\ell}) x_{\ell'},
                   \quad
  M^y_{\ell \ell'} = \E (x_{\ell} y_{\ell} - \rho_{\ell}) y_{\ell'},         
\end{align*}
along with the convention that if $a \in \{x,y\}$, then $\check a$
denotes the complementary element, and also writing
\begin{equation*}
  X^a_\ell =
  \begin{cases}
    (x_{\ell i})_{i=1}^n  & \text{if } a = x \\
      (y_{\ell i})_{i=1}^n  & \text{if } a = y.
  \end{cases}
\end{equation*}
The convergence in probability statements in the right hand column
follow from Lemma \ref{lem:cge}.

Consequently, combining terms according to 
\eqref{eq:vdecomp} and the previous limits, we finally get
\begin{align*}
  v^2 & = \omega c^T M c + 0 + (\theta-\omega) c^T (\Gamma^{xx} \hprod
        \Gamma^{yy} +  \Gamma^{xy} \hprod \Gamma^{yx}) c \\
        & = \omega c'(M-J)c + \theta c'Jc
          = \omega c'K c + \theta c'Jc
\end{align*}
after noting that $K = M  - J$.

The verification of the Lindeberg condition is straightforward.
[BS] show a closure property: for random variables $X_1,X_2$ and
positive $\epsilon$,
\begin{equation*}
  \E(|X_1+X_2|^2 \mathbbm{1}( |X_1+X_2|\geq\epsilon )
    \leq 4 (\E(|X_1|^2 \mathbbm{1}( |X_1|\geq \epsilon/2 ) ) + 
            \E(|X_2|^2 \mathbbm{1}( |X_2|\geq \epsilon/2 ) )).
\end{equation*}
From this property, it suffices to establish the Lindeberg condition
for individual martingale difference sequences
\begin{equation*}
  Z_{\ell i} 
   = (x_{\ell i} y_{\ell i} - \rho_\ell) b_{ii}, \quad
     x_{\ell i} S_i(y_\ell), \quad \text{and} \quad 
     y_{\ell i} S_i(x_\ell).
\end{equation*}
This follows just as in [BS] with minor changes to allow for pairs
$(x_{\ell i},y_{\ell i})$ in place of $(x_i,x_i)$.

\section*{Appendix C: A perturbation Lemma }
\label{sec:appendix-c:-proof}

Several variants of this
lemma appear in the literature,
most based on the approach of \cite{Kato1980}. The version here is
modified from \cite{Paul2005}. 
Let the
eigenvalues of a symmetric matrix $A$ be denoted by $\lambda_1(A) \geq \cdots
\geq \lambda_m(A)$, with the convention that $\lambda_0(A) = \infty$ and
$\lambda_{m+1}(A) = - \infty$. 
Let $P_s$ denote the projection matrix onto the possibly multi-dimensional
eigenspace corresponding to $\lambda_s(A)$ and define
\begin{displaymath}
H_r(A) = \sum_{s \neq r} \frac{1}{\lambda_s(A) -
\lambda_r(A)} P_{s}(A).  
\end{displaymath}
Observe that $H_r(A) (A - \lambda_r(A)I_m) = I_m - P_r(A)$; in this sense we may
say that $H_r(A)$ is the resolvent of $A$ ``evaluated at $\lambda_r$.''

\begin{lemma} \label{lemma:evec_perturb_bound}
Let $A$ and $B$ be symmetric $m \times m$ matrices. 
Suppose that $\lambda_r(A)$ is a simple eigenvalue of $A$ with
\begin{displaymath}
  \delta_r(A) = \min \{ |\lambda_j(A) - \lambda_r(A)| : 1 \leq j \neq
  r \leq m \}.
\end{displaymath}
If $\|B\| < \delta_r(A)/3$, then $\lambda_r(A+B)$ is also simple.
Let $\mathbf{p}_r(A)$ and $\mathbf{p}_r(A+B)$ denote the unit eigenvectors
associated with $\lambda_r(A)$ and $\lambda_r(A+B)$, with their signs chosen
so that $\gamma_r = \mathbf{p}_r(A)^T\mathbf{p}_r(A+B) \geq 0$. 
Then
\begin{equation}\label{eq:eig_perturb_first}
\mathbf{p}_r(A+B) - \mathbf{p}_r(A) = - H_r(A) B
\mathbf{p}_r(A) + R_r,
\end{equation}
with the quadratic error bound
\begin{equation}
  \label{eq:abs-err-bd}
  \| R_r \| \leq 10 \delta_r^{-2}(A) \| B \|^2.
\end{equation}
\end{lemma}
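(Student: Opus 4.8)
The plan is a direct Kato-style computation in two stages: the simplicity claim, then the first-order expansion with a quadratic remainder bound. For simplicity of $\lambda_r(A+B)$ I would invoke Weyl's perturbation inequality $|\lambda_j(A+B)-\lambda_j(A)|\le\|B\|$ for every $j$. Since $\|B\|<\delta_r(A)/3$, the eigenvalue $\lambda_r(A+B)$ stays within $\|B\|$ of $\lambda_r(A)$, while each other $\lambda_j(A+B)$ stays within $\|B\|$ of the corresponding $\lambda_j(A)$, which is at distance at least $\delta_r(A)$ from $\lambda_r(A)$; hence $|\lambda_j(A+B)-\lambda_r(A+B)|\ge\delta_r(A)-2\|B\|>\delta_r(A)/3>0$, so $\lambda_r(A+B)$ is simple (and in fact $\delta_r(A+B)\ge\delta_r(A)/3$).

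For the expansion, abbreviate $p=\mathbf{p}_r(A)$, $q=\mathbf{p}_r(A+B)$, $\lambda=\lambda_r(A)$, $\tilde\lambda=\lambda_r(A+B)$, $H=H_r(A)$, $\delta=\delta_r(A)$, and decompose $q=\gamma_r p+w$ with $w\perp p$, so that $\|w\|^2=1-\gamma_r^2$ and $\|q-p\|^2=(1-\gamma_r)^2+\|w\|^2=2(1-\gamma_r)$. Subtracting $Ap=\lambda p$ from $(A+B)q=\tilde\lambda q$ and using $(A-\lambda)p=0$ gives $(A-\lambda)w=(\tilde\lambda-\lambda)q-Bq$. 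Applying $H$ and using the two facts recalled just before the lemma, namely $H(A-\lambda I_m)=I_m-P_r(A)$ (so that $H(A-\lambda)w=w$ since $P_r(A)w=0$) and $Hp=0$ (so that $Hq=Hw$), I obtain the closed form $w=(\tilde\lambda-\lambda)Hw-HBp-HB(q-p)$ after writing $Bq=Bp+B(q-p)$. Adding $HBp$ to $q-p=(\gamma_r-1)p+w$ then produces exactly $R_r=(\gamma_r-1)p+(\tilde\lambda-\lambda)Hw-HB(q-p)$, the three-term expression to be estimated.

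The quadratic bound comes from estimating these three terms with $\|H\|\le\delta^{-1}$ and $|\tilde\lambda-\lambda|\le\|B\|$. From the identity $w=(\tilde\lambda-\lambda)Hq-HBq$ together with $\|q\|=1$ one gets $\|w\|\le\delta^{-1}\bigl(|\tilde\lambda-\lambda|+\|B\|\bigr)\le 2\delta^{-1}\|B\|$; since $0\le\gamma_r\le 1$ this gives $1-\gamma_r\le 1-\gamma_r^2=\|w\|^2\le 4\delta^{-2}\|B\|^2$ and $\|q-p\|\le 2\sqrt2\,\delta^{-1}\|B\|$. Hence $\|(\gamma_r-1)p\|\le 4\delta^{-2}\|B\|^2$, $\|(\tilde\lambda-\lambda)Hw\|\le\|B\|\cdot\delta^{-1}\cdot 2\delta^{-1}\|B\|=2\delta^{-2}\|B\|^2$, and $\|HB(q-p)\|\le\delta^{-1}\|B\|\cdot 2\sqrt2\,\delta^{-1}\|B\|=2\sqrt2\,\delta^{-2}\|B\|^2$, so $\|R_r\|\le(6+2\sqrt2)\delta^{-2}\|B\|^2\le 10\,\delta^{-2}(A)\|B\|^2$, as claimed.

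The one point that needs care — and is easy to overlook — is that the term $(\tilde\lambda-\lambda)Hq$ is genuinely second order: the naive bounds $|\tilde\lambda-\lambda|=O(\|B\|)$ and $\|Hq\|=O(1)$ would only yield a first-order remainder, but because $H$ annihilates $p$ we have $Hq=Hw=O(\|B\|)$, which is precisely what rescues the quadratic estimate. Everything else is constant-chasing; one should also note that the sign normalization $\gamma_r\ge 0$ is exactly what legitimizes the step $1-\gamma_r\le 1-\gamma_r^2$.
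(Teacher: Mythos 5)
Your proposal is correct and follows essentially the same route as the paper: Weyl's inequality for simplicity, the resolvent identity $H_r(A)(A-\lambda_r(A)I_m)=I_m-P_r(A)$ applied to the perturbed eigenvector equation, the same three remainder terms $(\gamma_r-1)\mathbf{p}_r(A)$, $\Delta\lambda_r H_r(A)\mathbf{p}_r(A+B)$, $H_r(A)B(\mathbf{p}_r(A+B)-\mathbf{p}_r(A))$, and the same key first-order bound $\|P_r^\perp\mathbf{p}_r(A+B)\|\le 2\delta_r^{-1}(A)\|B\|$ that makes each term quadratic. Your constant $(6+2\sqrt2)\delta_r^{-2}(A)\|B\|^2$ is slightly sharper than, but consistent with, the stated bound, so nothing further is needed.
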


\begin{proof}
We first note that if $\| B \| < \delta_r(A)/3$, then
 the usual eigenvalue perturbation bound $|\lambda_j(A+B)
- \lambda_j(A)| \leq \|B\|$ ensures that
\begin{equation*}
  \min_{j \neq r} |\lambda_j(A+B) - \lambda_r(A+B)| > \delta_r(A)/3,
\end{equation*}
and so in particular $\lambda_j(A+B)$ is simple.

  So, let $\mathbf{p}_r(A+B)$ be the unit eigenvector associated with
$\lambda_r(A+B)$, and rewrite the defining expression
$(A+B) \mathbf{p}_r(A+B) = \lambda_r(A+B) \mathbf{p}_r(A+B)$ as
\begin{displaymath}
  (A - \lambda_r(A) I_m) \mathbf{p}_r(A+B)
   = - B \mathbf{p}_r(A+B) + \{ \lambda_r(A+B) - \lambda_r(A) \}
   \mathbf{p}_r(A+B). 
\end{displaymath}
Write $\Delta \lambda_r$ for $\lambda_r(A+B) - \lambda_r(A)$ and 
$P_r^\perp$ for the orthogonal projector $I_m-P_r(A)$. 
Premultiply the previous display by $H_r(A)$; since
$H_r(A) (A - \lambda_r(A) I_m) = P_r^\perp$, we get
\begin{align}
  \label{eq:proj1}
P_r^\perp \mathbf{p}_r(A+B)
   & = - H_r(A) B \mathbf{p}_r(A+B) + \Delta \lambda_r H_r(A)
   \mathbf{p}_r(A+B) \\
   & = - H_r(A) B \mathbf{p}_r(A) - H_r(A) B \Delta \mathbf{p}_r 
       + \Delta \lambda_r H_r(A)   \mathbf{p}_r(A+B),
      \label{eq:proj2}
\end{align}
where $\Delta \mathbf{p}_r = \mathbf{p}_r(A+B) - \mathbf{p}_r(A)$.

Using \eqref{eq:proj1} along with $| \Delta \lambda_r | \leq \| B \|$
and $\| H_r(A) \| \leq \delta_r^{-1}(A)$, we get an initial bound
which is first order in $\| B \|$:
\begin{equation}
  \label{eq:first-order}
  y_r := \| P_r^\perp \mathbf{p}_r(A+B) \| 
    \leq 2 \delta_r^{-1} (A) \| B \|.
\end{equation}

Turning to the main focus of interest, clearly
\begin{equation}
  \label{eq:pr-decomp}
  \mathbf{p}_r(A+B) 
    = P_r^\perp \mathbf{p}_r(A+B)  + \gamma_r \mathbf{p}_r(A), 
      \qquad \gamma_r = \mathbf{p}_r(A)^T\mathbf{p}_r(A+B),
\end{equation}
so that from \eqref{eq:proj2} we arrive at the decomposition
\eqref{eq:eig_perturb_first}:
\begin{align*}
  \mathbf{p}_r(A+B) - \mathbf{p}_r(A) 
    & = - H_r(A) B \mathbf{p}_r(A) - R_1 + R_2 - R_3.
\end{align*}
with
\begin{displaymath}
  R_1 = H_r(A) B \Delta \mathbf{p}_r, \qquad
  R_2 = \Delta \lambda_r H_r(A)   \mathbf{p}_r(A+B), \qquad 
  R_3 = (1-\gamma_r) \mathbf{p}_r(A)
\end{displaymath}


Let us bound these error terms, starting with $R_2$.
Since $H_r(A) = H_r(A) P_r^\perp$, we may rewrite 
$R_2$ as $\Delta \lambda_r H_r(A) P_r^\perp \mathbf{p}_r(A+B)$, so
that
\begin{displaymath}
  \| R_2 \| 
   \leq | \Delta \lambda_r| \| H_r (A) \| \| P_r^\perp
         \mathbf{p}_r(A+B) \|  
   \leq \delta_r^{-1}(A) \| B \| y_r.
\end{displaymath}

From the definitions $\gamma_r^2 + y_r^2 = 1$,
so that  $  1 - \gamma_r = 1 - (1 - y_r^2)^{1/2} \leq y_r^2$,
and so, from \eqref{eq:first-order},
\begin{displaymath}
  \| R_3 \| = 1- \gamma_r \leq 2 \delta_r^{-1}(A) \| B \| y_r.
\end{displaymath}

Using \eqref{eq:pr-decomp},
\begin{displaymath}
  \Delta \mathbf{p}_r = P_r^\perp \mathbf{p}_r(A+B) + (\gamma_r - 1)
  \mathbf{p}_r(A),
\end{displaymath}
and so, from $1 - \gamma_r \leq y_r^2$
and $y_r \leq 1$, we have
\begin{equation}
  \label{eq:delpr}
  \| \Delta \mathbf{p}_r \| \leq y_r + y_r^2 \leq 2 y_r,
\end{equation}
and so
\begin{displaymath}
  \| R_1 \| 
   \leq \| H_r(A) B \Delta \mathbf{p}_r \|   
   \leq   2 \delta_r^{-1}(A) \| B \| y_r.
\end{displaymath}
Combining the bounds for $R_1, R_2$ and $R_3$, we obtain
\begin{equation}
  \label{eq:Rbd}
  \| R_r \| \leq 5 \delta_r^{-1} (A) \| B \| y_r,
\end{equation}
and combined with
\eqref{eq:first-order}, we recover \eqref{eq:abs-err-bd}. 
\end{proof}

\textit{Acknowledgement:} \   
We thank Matt McKay and David Morales for their discussion of, and many comments 
on, this manuscript.
This work supported in part by NIH R01 EB001988
(IMJ, JY) and by a Samsung Scholarship (JY).

\bibliographystyle{plainnat}
\bibliography{paul5,baiyao}

\begin{thebibliography}{26}
\providecommand{\natexlab}[1]{#1}
\providecommand{\url}[1]{\texttt{#1}}
\expandafter\ifx\csname urlstyle\endcsname\relax
  \providecommand{\doi}[1]{doi: #1}\else
  \providecommand{\doi}{doi: \begingroup \urlstyle{rm}\Url}\fi

\bibitem[Bai and Silverstein(2009)]{basi09}
Z.~D. Bai and Jack Silverstein.
\newblock \emph{Spectral Analysis of Large Dimensional Random Matrices}.
\newblock Springer, New York, second edition, 2009.

\bibitem[Bai and Silverstein(2004)]{basi04}
Z.~D. Bai and Jack~W. Silverstein.
\newblock C{LT} for linear spectral statistics of large-dimensional sample
  covariance matrices.
\newblock \emph{Ann. Probab.}, 32\penalty0 (1A):\penalty0 553--605, 2004.

\bibitem[Bai and Yin(1993)]{bayi93}
Z.~D. Bai and Y.~Q. Yin.
\newblock Limit of the smallest eigenvalue of a large-dimensional sample
  covariance matrix.
\newblock \emph{Ann. Probab.}, 21\penalty0 (3):\penalty0 1275--1294, 1993.

\bibitem[Bai and Ding(2012)]{badi12}
Zhidong Bai and Xue Ding.
\newblock Estimation of spiked eigenvalues in spiked models.
\newblock \emph{Random Matrices: Theory and Applications}, 01\penalty0
  (02):\penalty0 1150011, 2012.

\bibitem[Bai and Yao(2008)]{bayo08}
Zhidong Bai and Jian-feng Yao.
\newblock Central limit theorems for eigenvalues in a spiked population model.
\newblock \emph{Ann. Inst. Henri Poincar\'e Probab. Stat.}, 44\penalty0
  (3):\penalty0 447--474, 2008.

\bibitem[Bai and Yao(2012)]{bayo12}
Zhidong Bai and Jianfeng Yao.
\newblock On sample eigenvalues in a generalized spiked population model.
\newblock \emph{J. Multivariate Anal.}, 106:\penalty0 167--177, 2012.

\bibitem[Baik and Silverstein(2006)]{basi06}
Jinho Baik and Jack~W. Silverstein.
\newblock Eigenvalues of large sample covariance matrices of spiked population
  models.
\newblock \emph{Journal of Multivariate Analysis}, 97:\penalty0 1382--1408,
  2006.

\bibitem[Benaych-Georges and Nadakuditi(2011)]{bgrn11}
Florent Benaych-Georges and Raj~Rao Nadakuditi.
\newblock The eigenvalues and eigenvectors of finite, low rank perturbations of
  large random matrices.
\newblock \emph{Advances in Mathematics}, 227\penalty0 (1):\penalty0 494 --
  521, 2011.
\newblock ISSN 0001-8708.

\bibitem[Benaych-Georges et~al.(2011)Benaych-Georges, Guionnet, and
  Maida]{bggm11}
Florent Benaych-Georges, Alice Guionnet, and Mylène Maida.
\newblock Fluctuations of the extreme eigenvalues of finite rank deformations
  of random matrices.
\newblock \emph{Electron. J. Probab.}, 16:\penalty0 1621--1662, 2011.

\bibitem[Billingsley(1968)]{Bill68}
Patrick Billingsley.
\newblock \emph{Convergence of probability measures}.
\newblock John Wiley \& Sons, Inc., New York-London-Sydney, 1968.

\bibitem[Bloemendal et~al.(2016)Bloemendal, Knowles, Yau, and Yin]{bkyy16}
Alex Bloemendal, Antti Knowles, Horng-Tzer Yau, and Jun Yin.
\newblock On the principal components of sample covariance matrices.
\newblock \emph{Probability Theory and Related Fields}, 164\penalty0
  (1):\penalty0 459--552, Feb 2016.

\bibitem[Capitaine et~al.(2009)Capitaine, Donati-Martin, and F\'eral]{cdf09}
Mireille Capitaine, Catherine Donati-Martin, and Delphine F\'eral.
\newblock The largest eigenvalues of finite rank deformation of large {W}igner
  matrices: convergence and nonuniversality of the fluctuations.
\newblock \emph{Ann. Probab.}, 37\penalty0 (1):\penalty0 1--47, 2009.

\bibitem[Couillet and Hachem(2013)]{coha13}
R.~Couillet and W.~Hachem.
\newblock Fluctuations of spiked random matrix models and failure diagnosis in
  sensor networks.
\newblock \emph{IEEE Transactions on Information Theory}, 59\penalty0
  (1):\penalty0 509--525, 2013.

\bibitem[Ding(2015)]{ding15}
Xue Ding.
\newblock Convergence of sample eigenvectors of spiked population model.
\newblock \emph{Communications in Statistics - Theory and Methods}, 44\penalty0
  (18):\penalty0 3825--3840, 2015.

\bibitem[Fan et~al.(2018)Fan, Johnstone, and Sun]{fjs18}
Zhou Fan, Iain~M. Johnstone, and Yi~Sun.
\newblock Spiked covariances and principal components analysis in
  high-dimensional random effects models, 2018.
\newblock arXiv 1806.09529.

\bibitem[Kato(1980)]{Kato1980}
T.~Kato.
\newblock \emph{Perturbation {T}heory of {L}inear {O}perators}.
\newblock Springer-Verlag, 1980.

\bibitem[Lee et~al.(2010)Lee, Zou, and Wright]{lzw10}
Seunggeun Lee, Fei Zou, and Fred~A. Wright.
\newblock Convergence and prediction of principal component scores in
  high-dimensional settings.
\newblock \emph{Ann. Statist.}, 38\penalty0 (6):\penalty0 3605--3629, 12 2010.

\bibitem[Mathias(1993)]{math93}
Roy Mathias.
\newblock The {H}adamard operator norm of a circulant and applications.
\newblock \emph{SIAM J. Matrix Anal. Appl.}, 14\penalty0 (4):\penalty0
  1152--1167, 1993.

\bibitem[Morales-Jimenez et~al.(2018)Morales-Jimenez, Johnstone, McKay, and
  Yang]{mjmy18}
David Morales-Jimenez, Iain~M. Johnstone, Matthew~R. McKay, and Jeha Yang.
\newblock Asymptotics of sample correlation matrices for spiked models, 2018.
\newblock arXiv 1810.10214.

\bibitem[Muirhead(1982)]{muir82}
R.~J. Muirhead.
\newblock \emph{Aspects of Multivariate Statistical Theory}.
\newblock Wiley, 1982.

\bibitem[Paul(2005)]{Paul2005}
D.~Paul.
\newblock \emph{Nonparametric Estimation of Principal Components}.
\newblock PhD thesis, Stanford University, 2005.

\bibitem[Paul(2007)]{paul07}
Debashis Paul.
\newblock Asymptotics of sample eigenstructure for a large dimensional spiked
  covariance model.
\newblock \emph{Statistica Sinica}, 17:\penalty0 1617--1642, 2007.

\bibitem[Shi(2013)]{shi13}
Dai Shi.
\newblock Asymptotic joint distribution of extreme sample eigenvalues and
  eigenvectors in the spiked population model, 2013.
\newblock arXiv:1304.6113.

\bibitem[Silverstein and Bai(1995)]{siba95}
Jack~W. Silverstein and Z.~D. Bai.
\newblock On the empirical distribution of eigenvalues of a class of large
  dimensional random matrices.
\newblock \emph{Journal of Multivariate Analysis}, 54:\penalty0 175--192, 1995.

\bibitem[Yang and Johnstone(2018)]{yajo17}
Jeha Yang and Iain~M. Johnstone.
\newblock Edgeworth correction for the largest eigenvalue.
\newblock \emph{Statistica Sinica}, 28:\penalty0 2541--2564, 2018.

\bibitem[Yao et~al.(2015)Yao, Zheng, and Bai]{yzb15}
Jianfeng Yao, Shurong Zheng, and Zhidong Bai.
\newblock \emph{Large sample covariance matrices and high-dimensional data
  analysis}, volume~39 of \emph{Cambridge Series in Statistical and
  Probabilistic Mathematics}.
\newblock Cambridge University Press, New York, 2015.

\end{thebibliography}

\end{document}